\def\tsc#1{\csdef{#1}{\textsc{\lowercase{#1}}\xspace}}
\newtheorem{theorem}{Theorem}[section]
\newtheorem{lemma}[theorem]{Lemma}
\newtheorem{proposition}{Proposition}[section]
\theoremstyle{definition}
\newtheorem{definition}[theorem]{Definition}
\newtheorem{property}[theorem]{Property}
\newtheorem{remark}{Remark}[section]
\newtheorem{example}{Example}[section]
\newcommand{\grad}{\nabla}
\newcommand{\Diag}{\mbox{Diag}}
\newcommand{\D}{\mathbf{D}}
\renewcommand{\H}{\mathbf{H}}
\newcommand{\norm}[1]{\left\lVert#1\right\rVert}
\newcommand{\abs}[1]{\left|#1\right|}
\renewcommand{\epsilon}{\varepsilon}
\newcommand{\R}{\mathbb{R}}
\newcommand{\Epsilon}{\mathcal E}
\newcommand{\cblue}[1]{{\color{black}#1}}
\newcommand{\cred}[1]{{\color{black}#1}}
\newcommand{\cgreen}[1]{{\color{black}#1}}
\newcommand{\cpurple}[1]{{\color{black}#1}}
\begin{document}
\sloppy
\let\WriteBookmarks\relax
\def\floatpagepagefraction{1}
\def\textpagefraction{.001}
\shorttitle{Energy stable and accurate nondiagonal norm interpolation operators}
\shortauthors{T.A. Dao et~al.}

\title [mode = title]{Energy stable and accurate coupling of finite element methods and finite difference methods}                      
\tnotemark[1]

\tnotetext[1]{This research was funded by Uppsala University.}

\author[1]{Tuan Anh Dao}[orcid=0000-0002-5591-0373]
\cormark[1]
\ead{tuananh.dao@it.uu.se}

\address[1]{Department of Information Technology, Uppsala University, Sweden}

\author[1]{Ken Mattsson}[orcid=0000-0003-0843-3321]
\ead{ken.mattsson@it.uu.se}

\author[1]{Murtazo Nazarov}[orcid=0000-0003-4962-9048]
\ead{murtazo.nazarov@it.uu.se}

\cortext[cor1]{Corresponding author}

\begin{abstract}
We introduce a hybrid method to couple continuous Galerkin finite element methods and high-order finite difference methods in a nonconforming multiblock fashion. The aim is to optimize computational efficiency when complex geometries \cpurple{are} present. The proposed coupling technique requires minimal changes in the existing schemes while maintaining strict stability, accuracy, and energy conservation. Results are demonstrated on linear and nonlinear scalar conservation laws in two spatial dimensions.
\end{abstract}



\begin{keywords}
finite element methods \sep high-order finite difference methods \sep summation-by-parts operators \sep simultaneous approximation terms \sep adaptive methods \sep stability
\end{keywords}

\maketitle

\section{Introduction}
\vspace{0.3cm}
Finite element (FE) methods and finite difference (FD) methods are among the most studied and employed numerical methods for partial differential equations (PDEs), which serve numerous industrial applications and other research areas, e.g., economics, engineering, chemistry, physics. FD methods are often seen as \cpurple{simple and efficient schemes} for high-order approximations, see e.g., \citet{Mattsson2004,Shu2003}, despite being restricted to structured grids. FE methods are favored in various computational \cpurple{areas} due to their theoretical reliability, generality, and the ability to handle complex \cpurple{domain} shapes. However, the obtained matrices can be storage demanding; and the method itself is often considered to be more computationally expensive. For instance, while solving time-dependent nonlinear diffusion dominated problems, one has to reconstruct FE matrices in each time step, which is time-consuming. The greatest strengths of both methods can be combined by developing a hybrid scheme that allows different methods and refinement levels to be involved simultaneously. Hence, computational efficiency can be enhanced by utilizing the most suitable treatment for each portion of the domain. The major complication is known to consist in designing a method-to-method interface that results in an accurate and stable approximation. \cpurple{A favorable method should also accommodate the demands for simplicity, freedom to choose domain discretization, and minimal modification in the existing schemes to preserve their advantageous native properties.}

\cpurple{A well-designed numerical framework for energy stable approximations of time-dependent problems is the combination of the summation-by-parts (SBP) operators, see \citet{Kreiss1974}, and the simultaneous-approximation-term (SAT) technique, see \citet{Carpenter1994}. The SBP operators are used to approximate the governing equations. By design, they mimic the continuous integrations-by-parts properties. The SAT technique then imposes the boundary conditions in a provably stable manner.} We refer the reader for a detailed discussion on the theory and applications of SBP-SAT in the following papers and references therein, \cpurple{\cite{Carpenter2010,Fernandez2014,Kozdon2016,Lundgren2020,Lundquist2018,Mattsson2004,Nordstrom2001,Zemui2005}}. A crucial benefit of numerical schemes in this form is that a proper formulation strictly prevents nonphysical energy growth in the discrete approximation, a property often referred to as ``\textit{strict stability}''. The vast majority of previous papers on the SBP-SAT framework have been committed to FD schemes due to high-order operators' apparent advantages. Nontrivial geometries are then usually handled with \cpurple{curvilinear grids, see \citet{Nordstrom2001}, \cblue{\citet{Shadpey2020}}, \cblue{curved elements, see \citet{Crean2018}}, and multiblock coupling, see \citet{Carpenter2010}}. To carry out further practical geometries, successful attempts have been made to couple FD methods with unstructured finite volume (FV) methods, see \citet{Nordstrom2006,Nordstrom2009} by modifying the FV scheme at interfaces. The coupling of mixed order schemes on nonconforming grids was, for the first time, \cpurple{proposed} in an energy stable manner by \citet{Mattsson2010} introducing \textit{SBP-preserving interpolation operators}. Fundamental properties of SBP schemes are extended to block-to-block coupling: strict stability, accuracy, and conservation. This technique paves the way for many possibilities to couple different methods which can be written in SBP form. \cpurple{So far, the coupling of  FD--FD by \citet{Mattsson2010}, FD--discontinuous Galerkin (dG) method by \citet{Kozdon2016}, and FD--FV by \citet{Lundquist2018} are available in the literature.}

In practical computation, properties of the concerning problem, e.g., nature of the solution, desired precision, geometries, often vary over the domain. An impediment with discretizations by a single conventional method, especially when using uniform precision, is that sometimes the preferable scheme or the required resolution for a minor region may bring up extreme computational necessity over the whole domain, e.g., when complex geometries present. Despite being of great interest, attempts to combine FD methods and FE methods have rarely been successful. Independent techniques have been proposed \cpurple{for} several applications, e.g., seismic wave propagation, see \cite{Galis2008,Ma2004,Gao2019}, electrical modeling, see \cite{Vachiratienchai2010}. For example, \cite{Galis2008,Ma2004} require a transition region between the two methods; the interface solution is updated similarly to imposing Dirichlet boundary conditions with a suitably averaging solution. In \cite{Vachiratienchai2010}, at regions of interest, each rectangular FD block is \cpurple{split} into two FE triangular elements resulting in locally modified schemes; accuracy is observed in cases of sufficiently gradual topographic gradients. Most relevantly, \citet{Gao2019} derive an energy stable coupling of SBP FD schemes and FE schemes with quadrilateral elements under shared interface vertices/nodal points. Inline with other hybrid SBP methods, although \cpurple{the continuous Galerkin FE also has SBP properties, see \citet{Zemui2005}}, \cpurple{one difficulty} in deriving a hybrid method lies within its nondiagonal mass matrix (can be understood as an SBP norm matrix). Note that the most related references, such as \citet{Lundquist2018,Gao2019}, necessitate diagonal norm matrices.

In this paper, \cred{our main focus is to develop} a hybrid method to couple FD and FE methods in a nonconforming multiblock fashion. The proposed technique results in a provably stable, accurate, and energy-conserved unified SBP method. Interface continuity is weakly imposed by the SAT technique using \textit{nondiagonal norm SBP-preserving interpolation operators}. The SAT treatment on the FD side is the same as in our FD--FD coupling \cite{Mattsson2010}. \cpurple{The SAT treatment for FE that we propose in this current paper} is formulated in a general expression in the sense that it does not depend on \cpurple{the coupled scheme or the coupled mesh/grid}. \cred{Furthermore}, we show that even when considering nonlinear conservation laws, nondiagonal norm matrices, namely the continuous FE mass matrices, could be applicable without further \cred{techniques involved}, e.g., mass lumping \cite{Zemui2005}, to the classical Galerkin formulation. \cred{We include two techniques to construct the required interpolation operators}. In the latter one, we introduce a \cred{way} of acquiring the target interpolation operators through existing diagonal norm techniques, for instance, \cite{Almquist2018,Kozdon2016,Lundquist2018,Lundquist2020,Mattsson2010}.

This paper is organized as follows. In Section \ref{sec:sbp_operators}, the numerical schemes of both FE and FD are \cpurple{presented} in the context of SBP framework. \cpurple{Section \ref{sec:coupling} describes the proposed coupling technique}. We show numerical verification and computational results in Section \ref{sec:numerical_results} on an advection-diffusion equation and a nonlinear Burgers' equation. Section \ref{sec:conclusion} is the conclusion. Two approaches to constructing the required interpolation operators are shown in Appendix \ref{section:construct_interpolation_operators}.

\section{Summation-by-parts operators}
\label{sec:sbp_operators}
This section formulates summation-by-parts schemes by FD and FE approximations. Boundary treatment is briefly \cpurple{mentioned}.
\subsection{The continuous problem to a viscous scalar conservation law}
\label{sec:advection_diffusion_continuous}
Let $\Omega$ be a bounded domain in $\R^2$. \cpurple{Consider the following scalar conservation laws}
\begin{equation}
\label{eq:advection_diffusion_fem}
\begin{aligned}
\frac{\partial u}{\partial t} + \grad \cdot \bm{f}(u) & =  \grad \cdot (\epsilon \grad u), && (x,y) \in \Omega, t\in (0,+\infty),\\
u & = u_0(x,y), && (x,y) \in \Omega, t = 0,
\end{aligned}
\end{equation}
where the flux vector $\bm{f}(u) = \left(f_1(u),f_2(u)\right)^T$ consists of two linear or nonlinear mappings $f_1,f_2 \in C^1(\Omega; \mathbb R)$; $\epsilon = \epsilon(x,y)$ is a positive-valued function on $\Omega$; and $\mathbf{n}$ is the normal vector pointing outward at the boundary. The notation ``$\grad$'' refers to the gradient operator $\grad = \left(\frac{\partial}{\partial x},\frac{\partial}{\partial y}\right)^T$. In analysis of FD schemes, it is more convenient to write \eqref{eq:advection_diffusion_fem} as
\begin{equation}
\label{eq:advection_diffusion_fdm}
u_t + (f_1)_{x}u_x + (f_2)_{y}u_y = (\epsilon u_{x})_x+(\epsilon u_{y})_y, \quad (x,y) \in \Omega,
\end{equation}
where the subscript notations $x,y,t$ indicate the partial derivatives with respect to the corresponding variables. \cblue{Let $\bm f' = (f_1', f_2')^T$ be the first derivative of $\bm f$ with respect to the conserved variable $u$}. We assume that there exists a skew-symmetric splitting with the weight functions $\bm\alpha,\bm{\tilde\alpha}$,
\begin{equation}
\label{eq:skew_symmetric_assumption}
\grad \cdot \bm{f} = (\bm\alpha\grad) \cdot \bm{f} + \bm f'(u) \cdot \big(\bm{\tilde\alpha}\grad\big) u, \quad\bm\alpha = (\alpha_1(x,y),\alpha_2(x,y))^T,\bm{\tilde\alpha}=(1-\alpha_1(x,y),1-\alpha_2(x,y))^T,
\end{equation}
such that by using the alternative flux form, the total energy over time of a solution to \eqref{eq:advection_diffusion_fem} only conserves or decays, on the boundary or due to viscous dissipation, yet to be quantified. It is \cpurple{worth noting} that the above assumption is not too severe, in which the existence of such a splitting is shown for a broad class of fluxes \cpurple{by} \citet{Tadmor1984} by symmetrizing variables. \cpurple{This splitting technique is necessary for obtaining nonlinear energy estimates to \eqref{eq:advection_diffusion_fem}}. In \eqref{eq:skew_symmetric_assumption}, the following notations are used $\bm\alpha\grad=\left(\alpha_1\partial/\partial x,\alpha_2\partial/\partial x\right)^T,\bm{\tilde\alpha}\grad=\left(\tilde\alpha_1\partial/\partial x,\tilde\alpha_2\partial/\partial x\right)^T$. Similar notations used below are $(\bm\alpha\bm n)=(\alpha_1 n_1,\alpha_2 n_2)^T,(\bm{\tilde\alpha}\bm n)=(\tilde\alpha_1n_1,\tilde\alpha_2n_2)^T,\bm n=(n_1,n_2)^T$. One option of specifying the boundary condition is
\begin{equation}
\label{eq:advection_diffusion_fem_bc}
\begin{aligned}
\frac{1}{2}\big((\bm\alpha\mathbf{n})\cdot \bm{f}-\text{sign}(u)\abs{(\bm\alpha\mathbf{n})\cdot \bm{f}}\big)+\frac{1}{2}\big((\bm{\tilde\alpha}\mathbf{n})\cdot (u\bm{f}')-\text{sign}(u)\abs{(\bm{\tilde\alpha}\mathbf{n})\cdot (u\bm{f}')}\big)-\mathbf{n}\cdot\epsilon\grad u & = g_b(x,y,t),\\ & (x,y) \in \partial \Omega, t > 0,
\end{aligned}
\end{equation}
where $\partial \Omega$ denotes the domain boundary. \cgreen{Stability} of the problem \eqref{eq:advection_diffusion_fem} subject to the boundary condition \eqref{eq:advection_diffusion_fem_bc} is analyzed \cpurple{below}.
\begin{definition}
\label{def:continuous_inner_product_norm}
Given two real-valued vector functions $\bm w_1$, $\bm w_2$, and a real-valued function $b(x,y)>0$, the following definitions of a continuous \textit{inner product} and its corresponding \textit{norm} are used
\[
(\bm w_1,\bm w_2)_b = \int_\Omega \bm w_1\cdot \bm w_2 b(x,y) \,dxdy,\quad \norm{\bm w_1}^2_b = (\bm w_1,\bm w_1)_b,\quad(\bm w_1,\bm w_2)_{\partial\Omega,b} = \int_{\partial\Omega} \bm w_1\cdot \bm w_2 b(x,y) \,ds.
\]
\end{definition}
\noindent When $b(x,y)\equiv 1$, the subscript notation $b$ is skipped. Inner products and a norm of two scalar functions are defined analogously by considering $\bm w_1,\bm w_2$ as scalar functions. One can investigate necessary conditions for well-posedness of \eqref{eq:advection_diffusion_fem} with the boundary condition \eqref{eq:advection_diffusion_fem_bc} using the \textit{energy method}, see e.g., \citet{Kreiss1974,Fernandez2014}. For continuous problems that can be written in the form $u_t=F(t,u)$, a basic idea of the energy method is to examine energy growth utilizing the relation $\frac{d}{dt}\norm{u}^2=(u,u_t)+(u_t,u)$. A well-posed problem implies that $\frac{d}{dt}\norm{u}^2$ is nonpositive given zero boundary data. For two smooth scalar functions $w_1,w_2\in C^1(\Omega)$ and a smooth vector function $\bm{w}_3$, continuous integration rules read
\begin{equation}
\label{eq:continuous_intergration_rules}
\begin{aligned}
\left(w_1,\grad \cdot \bm{w}_3\right) & = \left(w_1,\mathbf{n}\cdot \bm{w}_3\right)_{\partial\Omega} - \left(\grad w_1,\bm{w}_3\right),\\
\left(w_1,\grad\cdot\grad w_2\right) & = \left(w_1,\mathbf{n}\cdot\grad w_2\right)_{\partial\Omega} - \left(\grad w_1, \grad w_2\right).
\end{aligned}
\end{equation}
Applying \eqref{eq:continuous_intergration_rules} to \eqref{eq:advection_diffusion_fem} gives
\begin{align*}
(u,u_t) = &\; \left(\bm\alpha\grad u,\bm{f}\right)-\left(u,\bm\alpha\mathbf{n}\cdot \bm{f}\right)_{\partial\Omega}+(\tilde{\bm\alpha}\grad\cdot(u \bm f'),u)-(\tilde{\bm\alpha}\mathbf{n}\cdot(u\bm{f}'),u)_{\partial\Omega}\\
&+(u,\mathbf{n}\cdot\epsilon\grad u)_{\partial\Omega}-(\epsilon\grad u,\grad u),\\
(u_t,u) = &\;-\left(\bm\alpha\grad \cdot \bm f(u),u\right)-\left(\tilde{\bm\alpha}\bm f'(u) \cdot \grad u, u \right)\\
&+(\mathbf{n}\cdot \epsilon\grad u,u)_{\partial\Omega}-(\epsilon\grad u,\grad u).
\end{align*}
Adding up the two equalities above yields
\begin{align}\label{eq:continous_energy_estimate}
\frac{d}{dt} \norm{u}^2 + 2\norm{\grad u}_\epsilon^2 = -(u,\bm\alpha\mathbf{n}\cdot\bm{f})_{\partial\Omega}-(\tilde{\bm\alpha}\bm n \cdot (u\bm f'),u)_{\partial \Omega}+2(u,\mathbf{n}\cdot\grad u)_{\partial \Omega,\epsilon},
\end{align}
where all the interior terms are cancelled by the following skew-symmetry assumption
\begin{equation}
\label{eq:skew_symmetric_precise}
-\alpha_1[(f_1)_xu-f_1u_x]-\alpha_2[(f_2)_yu-f_2u_y]+(1-\alpha_1)u^2(f_1')_x+(1-\alpha_2)u^2(f_2')_y = 0.
\end{equation}
\cred{If the splitting is defined pointwise}, i.e., $\alpha_1, \alpha_2$ are constant weights, a sufficient spatial-derivative-free argument to \eqref{eq:skew_symmetric_precise} reads
\begin{equation}\label{eq:skew_symmetric_sufficient}
\begin{cases}
\alpha_1f_1(u)-(1-\alpha_1)uf_1'(u) = 0,\\
\alpha_2f_2(u)-(1-\alpha_2)uf_2'(u) = 0.
\end{cases}
\end{equation}
Given a specific flux, one can either solve \eqref{eq:skew_symmetric_precise} or \eqref{eq:skew_symmetric_sufficient} to determine the value of $\bm\alpha$. It becomes apparent that the boundary condition \eqref{eq:advection_diffusion_fem_bc} imposes nonlinear stability to \eqref{eq:advection_diffusion_fem} since bounded energy is followed. Indeed, inserting \eqref{eq:advection_diffusion_fem_bc} \cred{into \eqref{eq:continous_energy_estimate}} with zero boundary data $g_b \equiv 0$ gives
\begin{align}
\label{eq:advection-diffusion-continuous-energy}
\frac{d}{dt} \norm{u}^2 + 2\norm{\grad u}_\epsilon^2 = -(u,\text{sign}(u)\abs{\bm\alpha
\mathbf{n}\cdot \bm{f}})_{\partial \Omega}-(\text{sign}(u)\abs{\tilde{\bm\alpha}\bm n \cdot (u\bm f')},u)_{\partial \Omega} \leq 0.
\end{align}

\subsection{Properties of the discrete operators}
\cgreen{Considering a general numerical method}, we denote \cblue{$\mathrm N$} the total number of degrees of freedom of the discretization, and \cblue{$\mathrm N_{\partial\Omega}$} the number of degrees of freedom distributed on the boundary. Let $\mathcal{H}$ be an integration operator over the whole domain $\mathcal{H}\bm u \approx \int_{\Omega} u dxdy, \bm u \in \R^{\cblue{\mathrm N}}, \bm u\approx u(x,y)$; $\mathcal{B}$ be an integration operator over the boundary $\mathcal{B}\bm u|_{\partial\Omega} \approx \int_{\partial\Omega} u dxdy$; and $\mathcal{L}$ be a boundary selection operator $\bm u|_{\partial\Omega}=\mathcal{L}\bm u$. Precise definitions of the three matrix operators $\mathcal{H},\mathcal{B},\mathcal{L}$ are often required by conventional summation-by-parts schemes, see e.g., \citet{Kreiss1974,Lundquist2018}. In the current study, $\mathcal{H},\mathcal{B},\mathcal{L}$ are assumed to have the following properties: $(i)$ $\mathcal{H},\mathcal{B}$ are symmetric, positive definite; $\mathcal{H}$ is of size \cblue{$\mathrm N\times\mathrm N$}; and $\mathcal{B}$ is of size \cblue{$\mathrm N_{\partial\Omega}\times\mathrm N_{\partial\Omega}$}; $(ii)$ $\mathcal{L}$ comprises a nonsquare binary matrix of which the only ``1'' element in each row one-to-one corresponds to a boundary index.

\begin{definition}
\label{def:discrete_inner_product_norm}
Given two vectors $\bm{u},\bm{v} \in \R^{\cblue{\mathrm N}}$, the following definitions of a \textit{discrete $\mathcal H$-inner product} and its corresponding \textit{discrete $\mathcal H$-norm} are used,
\[
(\bm{u},\bm{v})_{\mathcal{H}} = \bm{u}^T\mathcal{H}\bm{v};\quad (\mathcal{L}\bm{u},\mathcal{L}\bm{v})_{\mathcal{B}} = (\mathcal{L}\bm{u})^T\mathcal{B}(\mathcal{L}\bm{v});\quad \norm{\bm{u}}^2_{\mathcal{H}} = (\bm u,\bm u)_{\mathcal{H}} = \bm u^T\mathcal{H}\bm u.
\]
\end{definition}

\cgreen{The matrix $\mathcal{H}$ is often referred to as the norm matrix}. Advantages of an SBP scheme come from the fact that its discrete operators, by design, mimic the continuous integration-by-parts. The following definitions introduce discrete operators of which properties are analogous to the continuous integration rules \eqref{eq:continuous_intergration_rules}, see \citet{Carpenter1994,Nordstrom2001}.

\begin{definition}
\label{def:sbp_properties_first}
A matrix operator $\mathcal{H}^{-1}\mathcal{Q}$ approximating $\bm{a}\cdot\grad,\bm{a}\in\R^2$ is said to be an \textit{SBP advection operator} if
\begin{equation}
\label{eq:sbp_properties_first}
\mathcal{Q}+\mathcal{Q}^T = \mathcal{L}^T \mathcal{B} \;\mbox{Diag}(\bm a\cdot\mathbf{n})\; \mathcal{L},
\end{equation}
where $\Diag(\bm{\omega})$ denotes the square diagonal matrix of which diagonal is the discrete vector $\bm{\omega}$.
\end{definition}
\begin{definition}
\label{def:sbp_properties_second}
A matrix operator $\mathcal{H}^{-1}\mathcal{R}^{(\epsilon)}$ approximating $\grad\cdot\epsilon\grad$ is said to be an \textit{SBP diffusion operator} if
\begin{equation}
\label{eq:sbp_properties_second}
\mathcal{R}^{(\epsilon)} = \mathcal{L}^T\mathcal{B} \mathcal S-\mathcal{A},
\end{equation}
where $\mathcal{A}=\mathcal{A}^T \geq 0$ and $\mathcal S$ is a consistent approximation of $\mathbf{n}\cdot\epsilon\grad$ at the boundary.
\end{definition}
The discrete energy method can be analogously applied to SBP schemes utilizing the properties \eqref{eq:sbp_properties_first}, \eqref{eq:sbp_properties_second}. A necessary condition for strict stability is that $\frac{d}{dt}\norm{\bm u}_{\mathcal{H}}^2$ being nonpositive given zero boundary data.

%
We introduce FD notations in the following subsection.

\subsection{The FD scheme}
\label{sec:sbp_form_fdm}
Consider a one-dimensional domain $[x_l,x_r]$ discretized by a grid of $n$ equally spaced points $\{x_l\equiv x_1 < x_2<\dots<x_n \equiv x_r\}$. We first look at one-dimensional SBP operators for FD. The following two definitions (first presented in \cite{Mattsson2012, Mattsson2004}) are central.
\begin{definition}
A FD operator $\bm D_1 = \bm  H^{-1} \bm Q$ approximating $\frac{\partial}{\partial x}$ using $q$th-order interior stencils is called a \textit{$q$th-order accurate first derivative SBP operator} if $\bm H=\bm  H^T>0$ and $\bm Q+\bm Q^T=\Diag(-1,0,\dots,0,1)$.
\end{definition}

\cgreen{$\bm H$ is the one-dimensional FD norm matrix}. Let $\bm e_1 = (1,0,\dots,0)^T,\bm e_n = (0,\dots,0,1)^T$ be column vectors of length $n$, and $\bm d_1,\bm d_n$ be row vectors approximating first derivatives at $x_1,x_n$.

\begin{definition}
A FD operator $\bm D_2^{(\epsilon)} = \bm H^{-1}(-\bm A-\epsilon_1\bm e_1 \bm d_1+\epsilon_n\bm e_n \bm d_n)$ approximating $\epsilon(x)\frac{\partial^2}{\partial x^2}$ using $q$th-order interior stencils is called a \textit{$q$th-order accurate second derivative SBP operator} if $\bm H= \bm H^T>0, \bm A= \bm A^T\geq0$.
\end{definition}

\begin{figure}
    \centering
    \includegraphics[scale=1]{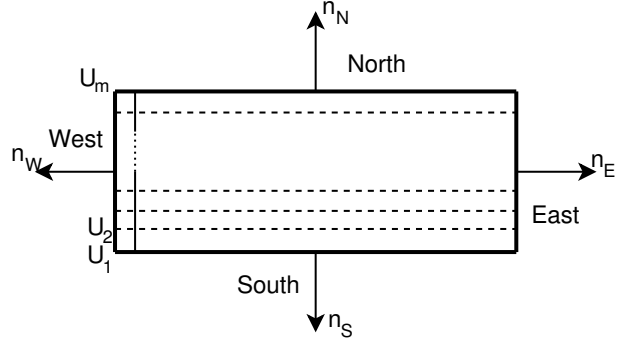}
    \caption{A rectangle domain discretized by an equidistant grid.}
    \label{fig:fd_domain}
\end{figure}

SBP FD schemes for multiple spatial dimensions are naturally extended from the one-dimensional operators. Consider a bounded rectangle domain $\Omega = [x_l,x_r]\times[y_l,y_r] \subset \R^2$ illustrated in Figure \ref{fig:fd_domain}. A FD solution is approximated on $n\times m$ equidistant grid points $\{(x_i,y_j), i =1,2,\dots,n, j = 1,2,\dots,m\}$, where
\begin{align*}
x_i&=x_l+(i-1)h_x, \quad h_x = \frac{x_r-x_l}{n-1},\\
y_j&=y_l+(j-1)h_y, \quad h_y = \frac{y_r-y_l}{m-1}.
\end{align*}

A discrete solution vector $\bm{u}$ approximating a continuous function $u:\Omega\longrightarrow\R$ is then presented by
\[
\bm{u} = \begin{bmatrix} u_{1,1},u_{1,2},\dots,u_{1,m}, &  u_{2,1},u_{2,2},\dots,u_{2,m}, & \dots &  u_{n,1},u_{n,2},\dots,u_{n,m} \end{bmatrix}^T
\]
where $u_{i,j}$ approximates $u(x_i,y_j)$. It's often convenient to interprete $\bm{u}$ as a ``vector of vectors''. The total number of degrees of freedom is $\cblue{\mathrm N}=n \times m$.\\

Discrete operators for the two-dimensional approximation are listed below. Definition of the Kronecker product ``$\otimes$'' can be found in e.g., \citet{Mattsson2010}. Denote the identity matrix by $\bm I$, and the one of size $k\times k$ by $\bm I_k$. The following FD operators will be frequently used in the \cblue{analysis},
\begin{equation}
\label{eq:fdm_2d_operators}
\begin{aligned}
\H_x & = \bm H_x \otimes\bm  I_m, & \H_y & =\bm  I_n \otimes\bm  H_y,\\
\D_x & = \bm D_1 \otimes\bm  I_m, & \D_y & = \bm I_n \otimes \bm D_1,\\
\D_{2x} & = \bm D_2^{(b)} \otimes \bm I_m, & \D_{2y} & =\bm  I_n \otimes\bm  D_2^{(b)},
\end{aligned}
\end{equation}
where $\bm H_x,\bm H_y$ are the one-dimensional norm matrices in $x-$ and $y-$ directions. For ease of reading, it is worth noting that the first components in all of the above Kronecker products are of size $n\times n$, and the second components are of size $m\times m$. Note that the matrix $\bm D_2^{(b)}$ in $\D_{2x},\D_{2y}$ varies for different rows and columns, namely,
\cblue{$\D_{2y}^{(b)} = \Diag(\bm D_2^{(\bm b^1)}, \dots, \bm D_2^{(\bm b^n)})$}, where $\bm b^i,i=1,2,\dots,n$, being a vector of length $m$, corresponds to the value of $b(x,y)$ at $(x_i,y_j),j=1,2,\dots,m$. The corresponding two-dimensional norm matrix is $\H:=\H_x\H_y = \bm H_x\otimes\bm  H_y$.

A consistent FD semi-discretization of \eqref{eq:advection_diffusion_fdm} using diagonal norm SBP operators is given by
\begin{equation}
\label{eq:advection_diffusion_fdm_approx}
\begin{aligned}
\bm{u}_t = & \; -\alpha_1 \D_x \bm f_1(\bm u) -(1-\alpha_1)\bm F_1' \D_x \bm{u} - \alpha_2 \D_y \bm f_2 (\bm{u}) -(1-\alpha_2)\bm F_2' \D_y \bm u\\
&+(\D_{2x}^{(\epsilon)}+ \D_{2y}^{(\epsilon)})\bm u + \mbox{SAT},
\end{aligned}
\end{equation}
where \cgreen{$\bm f_1, \bm f_1',\bm f_2, \bm f_2'$ are, respectively, the vectors containing values of $f_1,f_1',f_2,f_2'$ evaluated at nodal points with respect to the discrete variable $\bm u$}. The diagonal matrices $\bm F_1'$ and $\bm F_2'$ are defined as $\bm F_1' = \Diag (\bm f_1'(\bm u))$, $\bm F_2' = \Diag (\bm f_2'(\bm u))$.

The following proposition can be straightforwardly derived from how the above operators are defined, but useful in connection with FE schemes in the next section.
\begin{proposition}
\cred{Let $\grad \cdot \bm f$ be a linear advection flux, i.e. $\bm f'= \bm a = (a_1,a_2)^T\in\R^2$. }The advection operator $a_1\D_x+a_2\D_y, \bm a = (a_1,a_2) \in \R^2$, and the diffusion operator $\D_{2x}^{(\epsilon)} + \D_{2y}^{(\epsilon)}$ in \eqref{eq:advection_diffusion_fdm_approx} satisfy the summation-by-parts rules \eqref{eq:sbp_properties_first} and \eqref{eq:sbp_properties_second}, with the norm matrix $\H$.
\end{proposition}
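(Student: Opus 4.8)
The plan is to verify Definitions~\ref{def:sbp_properties_first} and~\ref{def:sbp_properties_second} directly, by forming $\mathcal Q := \H(a_1\D_x + a_2\D_y)$ and $\mathcal R^{(\epsilon)} := \H(\D_{2x}^{(\epsilon)}+\D_{2y}^{(\epsilon)})$ and matching them against the right-hand sides of \eqref{eq:sbp_properties_first} and \eqref{eq:sbp_properties_second} with the norm matrix $\H=\bm H_x\otimes\bm H_y$. The two tools I rely on are the symmetry of the one-dimensional norm matrices $\bm H_x,\bm H_y$ (and the fact that, for a diagonal norm, they are diagonal and positive) together with the Kronecker mixed-product identity $(\bm A\otimes\bm B)(\bm C\otimes\bm D)=(\bm A\bm C)\otimes(\bm B\bm D)$. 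Denote by $\bm Q_x,\bm Q_y$ the corresponding $\bm Q$-factors, so that the $n\times n$ operator $\bm D_1=\bm H_x^{-1}\bm Q_x$ and the $m\times m$ operator $\bm D_1=\bm H_y^{-1}\bm Q_y$ act in the $x$- and $y$-directions respectively.

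For the advection operator I would first use the mixed-product identity to obtain $\H\D_x=\bm Q_x\otimes\bm H_y$ and $\H\D_y=\bm H_x\otimes\bm Q_y$, hence $\mathcal Q=a_1(\bm Q_x\otimes\bm H_y)+a_2(\bm H_x\otimes\bm Q_y)$. Adding the transpose, using $\bm H_x^T=\bm H_x$, $\bm H_y^T=\bm H_y$ and the first-derivative SBP relations $\bm Q_x+\bm Q_x^T=\bm E_x$, $\bm Q_y+\bm Q_y^T=\bm E_y$ with $\bm E_x=\Diag(-1,0,\dots,0,1)$ of size $n$ and $\bm E_y$ analogous of size $m$, yields
\begin{equation*}
\mathcal Q+\mathcal Q^T=a_1(\bm E_x\otimes\bm H_y)+a_2(\bm H_x\otimes\bm E_y).
\end{equation*}
The remaining task is to recognise this as $\mathcal L^T\mathcal B\,\Diag(\bm a\cdot\mathbf n)\,\mathcal L$: splitting $\bm E_x=\bm e_n\bm e_n^T-\bm e_1\bm e_1^T$ isolates the right edge (where $\bm a\cdot\mathbf n=a_1$, with boundary weight $\bm H_y$) from the left edge (where $\bm a\cdot\mathbf n=-a_1$), and likewise $\bm E_y$ isolates the top and bottom edges with weight $\bm H_x$. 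Each of the four pieces is exactly the edge contribution to the discrete boundary quadrature on the rectangular grid.

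For the diffusion operator I would exploit that, for a diagonal norm, $\bm H_x\otimes\bm H_y$ is block-diagonal, so that the one-dimensional factorisation $\bm H\bm D_2^{(\epsilon)}=-\bm A-\epsilon_1\bm e_1\bm d_1+\epsilon_n\bm e_n\bm d_n$ lifts line-by-line to two dimensions. For $\D_{2y}^{(\epsilon)}=\Diag(\bm D_2^{(\bm b^1)},\dots,\bm D_2^{(\bm b^n)})$ this is immediate, each block $\bm D_2^{(\bm b^i)}$ contributing its own $-\bm A^i$ together with its bottom/top boundary derivative terms; the $x$-operator is treated identically after accounting for the grid ordering. Summing the $-\bm A$ contributions over all grid lines produces a symmetric $\mathcal A$ that is positive semidefinite, since every one-dimensional $\bm A\ge0$ is scaled by a positive norm weight; collecting the remaining $\bm e\bm d$ terms assembles $\mathcal L^T\mathcal B\mathcal S$, with $\mathcal S$ the induced consistent approximation of $\mathbf n\cdot\epsilon\grad$ on each edge. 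This gives $\mathcal R^{(\epsilon)}=\mathcal L^T\mathcal B\mathcal S-\mathcal A$ as required.

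The step I expect to be the main obstacle is the bookkeeping at the four corner nodes, where an $x$-edge and a $y$-edge meet: the corner must receive the sum of its $x$-edge weight (carrying $\bm H_y$) and its $y$-edge weight (carrying $\bm H_x$), and one has to check that this overlap is exactly how $\mathcal L^T\mathcal B\,\Diag(\bm a\cdot\mathbf n)\,\mathcal L$ (and correspondingly $\mathcal L^T\mathcal B\mathcal S$) is assembled, rather than being double-counted or dropped. Everything away from the corners is routine Kronecker algebra; the only genuine verifications are this corner consistency and the positive semidefiniteness of the assembled $\mathcal A$, both of which follow from the diagonal, positive structure of $\bm H_x$ and $\bm H_y$.
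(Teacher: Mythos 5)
Your proposal is correct, but note that the paper itself gives no proof of this proposition at all --- it states only that ``Since this property is intended by definition, no proof is given.'' Your argument therefore supplies the verification the paper omits, and it is the standard one: the Kronecker identities $\H\D_x=\bm Q_x\otimes\bm H_y$ and $\H\D_y=\bm H_x\otimes\bm Q_y$ reduce \eqref{eq:sbp_properties_first} to the one-dimensional relations $\bm Q+\bm Q^T=\Diag(-1,0,\dots,0,1)$, and the block-diagonal structure of $\bm H_x\otimes\bm H_y$ (valid because the norms are diagonal, as the paper assumes for its FD operators) lets the one-dimensional factorisation $\bm H\bm D_2^{(\epsilon)}=-\bm A-\epsilon_1\bm e_1\bm d_1+\epsilon_n\bm e_n\bm d_n$ lift line-by-line to give \eqref{eq:sbp_properties_second}, with $\mathcal A$ assembled from positively weighted copies of the one-dimensional $\bm A\geq 0$. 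The corner issue you flag is real but benign: the identity $\mathcal Q+\mathcal Q^T=a_1(\bm E_x\otimes\bm H_y)+a_2(\bm H_x\otimes\bm E_y)$ is exact regardless, and matching it to $\mathcal L^T\mathcal B\,\Diag(\bm a\cdot\mathbf n)\,\mathcal L$ only requires interpreting $\mathcal L$ and $\mathcal B$ edge-by-edge (so that each corner node is selected once per incident edge and receives the sum of the two edge quadrature weights), which is the standard convention for rectangular SBP blocks and is consistent with how the paper's SATs \eqref{eq:advection_diffusion_fdm_sat} treat the four edges as separate terms. The only caveat is that your reading of property (ii) of $\mathcal L$ (``one-to-one'') should be understood in this per-edge sense; with that understanding the proof is complete.
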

Since this property is intended by definition, no proof is given.\\

The normal vectors at the North, East, South, West boundaries are $\mathbf{n}_N=(0,1)^T, \mathbf{n}_E=(1,0)^T,\mathbf{n}_S=(0,-1)^T$ and $\mathbf{n}_W=(-1,0)^T$, respectively. The boundary condition \eqref{eq:advection_diffusion_fem_bc} can be written more precisely at each boundary,
\[
\begin{cases}
\displaystyle\mbox{North}: \frac{1}{2} \left(\alpha_2f_2(u)-\text{sign}(u)\abs{\alpha_2f_2(u)}\right)+\frac{1}{2} \left(\tilde\alpha_2uf_2'(u)-\text{sign}(u)\abs{\tilde\alpha_2 uf_2' (u)}\right) -\epsilon u_y = g_{bN}(t),\\
\displaystyle\mbox{East}: \frac{1}{2} \left(\alpha_1f_1(u)-\text{sign}(u)\abs{\alpha_1f_1(u)}\right)+\frac{1}{2} \left(\tilde\alpha_1uf_1'(u)-\text{sign}(u)\abs{\tilde\alpha_1 uf_1'(u)}\right) -\epsilon u_x = g_{bE}(t),\\
\displaystyle\mbox{South}: \frac{1}{2} \left(-\alpha_2f_2(u)-\text{sign}(u)\abs{\alpha_2f_2(u)}\right)+\frac{1}{2} \left(-\tilde\alpha_2uf_2'(u)-\text{sign}(u)\abs{\tilde\alpha_2uf_2'(u)}\right) + \epsilon u_y = g_{bS}(t),\\
\displaystyle\mbox{West}: \frac{1}{2} \left(-\alpha_1f_1(u)-\text{sign}(u)\abs{\alpha_1f_1(u)}\right)+\frac{1}{2} \left(-\tilde\alpha_1uf_1'(u)-\text{sign}(u)\abs{\tilde\alpha_1uf_1'(u)}\right)+\epsilon u_x = g_{bW}(t),
\end{cases}
\]
where $g_{bN}, g_{bE}, g_{bS}, g_{bW}$ are parts of $g_b(x,y,t)$ on the North, East, South, West boundaries, respectively. We show that the following penalty terms weakly forcing the boundary condition \eqref{eq:advection_diffusion_fem_bc} yield stability to \eqref{eq:advection_diffusion_fdm_approx},
\begin{equation}
\begin{aligned}
\label{eq:advection_diffusion_fdm_sat}
\mbox{SAT}\;\;\, &= \mbox{SAT}_N+\mbox{SAT}_E+\mbox{SAT}_S+\mbox{SAT}_W,\\
\mbox{SAT}_N &= \tau_N\Big[\frac{1}{2}\left(\alpha_2\bm f_2(\bm u_N)-\Diag(\text{sign}(\bm u_N))\abs{\alpha_2\bm f_2(\bm u_N)}\right)\\
&\quad\quad+\frac{1}{2}\left(\tilde\alpha_2{\bm U}_N\bm f_2'(\bm u_N)-\Diag(\text{sign}(\bm u_N))\abs{\tilde\alpha_2{\bm U}_N\bm f_2'(\bm u_N)}\right)-\bm\Epsilon_N(\D_y\bm u)_N-\bm g_{bN}\Big]\otimes (\bm H_y^{-1}\bm e_m),\\
\mbox{SAT}_E\, &= \tau_E (\bm H_x^{-1} \bm e_n) \otimes \Big[\frac{1}{2}\left(\alpha_1\bm f_1(\bm u_E)-\Diag(\text{sign}(\bm u_E))\abs{\alpha_1\bm f_1(\bm u_E)}\right)\\
&\quad\quad+\frac{1}{2}\left(\tilde\alpha_1{\bm U}_E\bm f_1'(\bm u_E)-\Diag(\text{sign}(\bm u_E))\abs{\tilde\alpha_1{\bm U}_E\bm f_1'(\bm u_E)}\right)-\bm\Epsilon_E(\D_x\bm u)_E - \bm  g_{bE}\Big],\\
\mbox{SAT}_S\; &= \tau_S \Big[\frac{1}{2}\left(-\alpha_2\bm f_2(\bm u_S)-\Diag(\text{sign}(\bm u_S))\abs{\alpha_2\bm f_2(\bm u_S)}\right)\\
&\quad\quad+\frac{1}{2}\left(-\tilde\alpha_2{\bm U}_S\bm f_2'(\bm u_S)-\Diag(\text{sign}(\bm u_S))\abs{\tilde\alpha_2{\bm U}_S\bm f_2'(\bm u_S)}\right)+\bm\Epsilon_S(\D_y\bm u)_S -\bm g_{bS}\Big] \otimes (\bm H_y^{-1} \bm e_1),\\
\mbox{SAT}_W &= \tau_W (\bm H_x^{-1} \bm e_1) \otimes \Big[\frac{1}{2}\left(-\alpha_1\bm f_1(\bm u_W)-\Diag(\text{sign}(\bm u_W))\abs{\alpha_1\bm f_1(\bm u_W)}\right)\\
&\quad\quad+\frac{1}{2}\left(-\tilde\alpha_1{\bm U}_W\bm f_1'(\bm u_W)-\Diag(\text{sign}(\bm u_W))\abs{\tilde\alpha_1{\bm U}_W\bm f_1'(\bm u_W)}\right)+\bm\Epsilon_W(\D_x\bm u)_W - \bm  g_{bW}\Big],
\end{aligned}
\end{equation}
where $\bm g_{bN},\bm g_{bE},\bm g_{bS},\bm g_{bW}$ are vectors containing values of $g_b$ respectively evaluated at North, East, South, West boundary grid points; $\bm\Epsilon=\Diag(\bm\epsilon)$, where $\bm\epsilon$ is the discrete analogue of $\epsilon$ evaluated at the grid points, $\bm\Epsilon_N$, $\bm\Epsilon_E$, $\bm\Epsilon_S$, $\bm\Epsilon_W$ are parts of $\bm\Epsilon$ at the North, East, South, West boundaries; and
\begin{equation}
\label{eq:fdm_2d_operators2}
\hspace*{-0.5cm}
\begin{aligned}
\bm u_N &= (\bm I_n\otimes \bm e_m^T)\bm u, &\bm u_E&=(\bm e_n^T\otimes\bm I_m)\bm u, &\bm u_S&=(\bm I_n\otimes \bm e_1^T)\bm u, &\bm u_W&=(\bm e_1^T\otimes\bm  I_m)\bm u,\\
{\bm U}_N &= \Diag(\bm u_N), & {\bm U}_E &= \Diag(\bm u_E), & {\bm U}_S &= \Diag(\bm u_S), & {\bm U}_W &= \Diag(\bm u_W),\\
(\D_y \bm u)_N &= (\bm I_n \otimes \bm d_m) \bm u, &(\D_x \bm u)_E&= (\bm d_n \otimes\bm  I_m) \bm u, &(\D_y\bm u)_S&=(\bm I_n \otimes \bm d_1) \bm u, &(\D_x \bm u)_W&= (\bm d_1 \otimes\bm  I_m) \bm u.
\end{aligned}
\end{equation}
The following theorem completes the formulation of the FD approximation to \eqref{eq:advection_diffusion_fem}, \eqref{eq:advection_diffusion_fem_bc}.
\begin{theorem}
\label{theorem:advection_diffusion_bt_penalties}
By choosing $\tau_N = \tau_E = \tau_S = \tau_W = 1$, the approximation \eqref{eq:advection_diffusion_fdm_approx} with the outer boundary terms \eqref{eq:advection_diffusion_fdm_sat} is stable.
\end{theorem}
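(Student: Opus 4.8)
The plan is to run the discrete energy method, mirroring the continuous derivation that produced \eqref{eq:advection-diffusion-continuous-energy}. Since $\H=\bm H_x\otimes\bm H_y$ is symmetric, differentiating gives $\frac{d}{dt}\norm{\bm u}_{\H}^2=2\bm u^T\H\bm u_t$, and I would substitute the right-hand side of \eqref{eq:advection_diffusion_fdm_approx}, splitting the estimate into advection, diffusion, and SAT contributions. Two structural facts are used repeatedly: the FD norm is \emph{diagonal}, so the diagonal matrices $\bm F_1',\bm F_2',\bm\Epsilon$ commute with $\H$; and the first-derivative SBP relation yields $\H\D_x=\bm Q_x\otimes\bm H_y=:\tilde{\bm Q}_x$ (with $\bm Q_x$ the $x$-direction $\bm Q$) where $\tilde{\bm Q}_x+\tilde{\bm Q}_x^T=(\bm e_n\bm e_n^T-\bm e_1\bm e_1^T)\otimes\bm H_y$ is a pure boundary matrix, symmetrically for $\H\D_y$, while the second-derivative relation gives $\bm H_x\bm D_2^{(\epsilon)}=-\bm A-\epsilon_1\bm e_1\bm d_1+\epsilon_n\bm e_n\bm d_n$ with $\bm A=\bm A^T\ge0$.

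For the $x$-advection terms, multiplying $-\alpha_1\D_x\bm f_1(\bm u)-(1-\alpha_1)\bm F_1'\D_x\bm u$ by $2\bm u^T\H$ and commuting $\bm F_1'$ turns the second piece into $-2(1-\alpha_1)(\bm F_1'\bm u)^T\tilde{\bm Q}_x\bm u$. Here I would invoke the pointwise skew-symmetry condition \eqref{eq:skew_symmetric_sufficient} for constant weights, $(1-\alpha_1)\bm F_1'\bm u=\alpha_1\bm f_1(\bm u)$, which recasts this as $-2\alpha_1\bm f_1(\bm u)^T\tilde{\bm Q}_x\bm u$, precisely the transpose partner of the first piece $-2\alpha_1\bm u^T\tilde{\bm Q}_x\bm f_1(\bm u)$. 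Their sum collapses to $-2\alpha_1\bm u^T(\tilde{\bm Q}_x+\tilde{\bm Q}_x^T)\bm f_1(\bm u)$, whose interior vanishes, leaving only East/West boundary fluxes; the $y$-advection is identical and produces North/South fluxes. I expect this cancellation of the nonlinear interior terms to be the crux of the proof and the reason the splitting must be pointwise: for genuinely variable weights the SBP operators obey only an approximate discrete product rule, so the interior would not cancel exactly.

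For the diffusion terms, the second-derivative SBP relation gives $2\bm u^T\H\D_{2x}^{(\epsilon)}\bm u=-2\bm u^T\bm A_x\bm u+(\text{East/West }\epsilon\text{-flux})$ with a symmetric positive semidefinite $\bm A_x$, and analogously $\bm A_y$ in $y$; together the interior contributes the nonpositive dissipation $-2\bm u^T(\bm A_x+\bm A_y)\bm u$. I would then expand $2\bm u^T\H\,\mathrm{SAT}$: because each penalty carries a factor $\bm H_x^{-1}\bm e_n$ (respectively $\bm e_1$, $\bm H_y^{-1}\bm e_m$, \dots), the $\bm H_x$ (or $\bm H_y$) factor in $\H$ cancels the inverse and leaves a boundary inner product, e.g. $2\bm u^T\H\,\mathrm{SAT}_E=2\bm u_E^T\bm H_y\bm S_E$ with $\bm S_E$ the bracket in \eqref{eq:advection_diffusion_fdm_sat}. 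Choosing $\tau=1$ is exactly what makes the flux part of $\bm S_E$ cancel the advection boundary term, after rewriting $2\alpha_1\bm f_1=\alpha_1\bm f_1+\tilde\alpha_1\bm U_E\bm f_1'$ again via \eqref{eq:skew_symmetric_sufficient}, and the $-\bm\Epsilon_E(\D_x\bm u)_E$ term cancel the diffusion $\epsilon$-flux; any other $\tau$ would leave an indefinite boundary remainder.

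With zero boundary data $\bm g_b\equiv0$, the only surviving contributions are the interior dissipation and, on each side, the ``sign'' parts of the SAT, for instance $-\bm u_E^T\bm H_y\Diag(\text{sign}(\bm u_E))\big(\abs{\alpha_1\bm f_1(\bm u_E)}+\abs{\tilde\alpha_1\bm U_E\bm f_1'(\bm u_E)}\big)$. Using $u\,\text{sign}(u)=\abs{u}$ and the positivity of the diagonal entries of $\bm H_x$ and $\bm H_y$, each such term is manifestly nonpositive, and the same holds on all four boundaries. Collecting everything yields $\frac{d}{dt}\norm{\bm u}_{\H}^2=-2\bm u^T(\bm A_x+\bm A_y)\bm u-(\text{nonnegative boundary terms})\le0$, the discrete counterpart of \eqref{eq:advection-diffusion-continuous-energy}, which is the desired stability.
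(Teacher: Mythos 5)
Your proposal is correct and follows essentially the same route as the paper: the discrete energy method with the SBP properties reducing everything to boundary terms, the discrete skew-symmetry condition \eqref{eq:skew_symmetric_sufficient_discrete} cancelling the nonlinear interior contributions, the choice $\tau=1$ cancelling the advective and viscous boundary fluxes against the SATs, and the remaining $\mathrm{sign}$-weighted terms being manifestly nonpositive. The paper's proof merely states the resulting energy identity and argues its sign; your write-up supplies the intermediate algebra but introduces no new idea or different decomposition.
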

\begin{proof}
Let ${\bm U} = \Diag(\bm u)$. Deriving a discrete energy estimate from \eqref{eq:advection_diffusion_fdm_approx} and \eqref{eq:advection_diffusion_fdm_sat}, one \cblue{has}
\begin{align*}
&\;\frac{d}{dt} \norm{\bm u}_{\H}^2 + 2\bm u^T(\bm A\otimes \bm H_y)\bm u+ 2\bm u^T(\bm H_x\otimes \bm A)\bm u\\
=&\; -(\Diag(\text{sign}(\bm u)) \bm u)^T(\bm H_x\otimes \bm e_1\bm e_1^T+\bm e_m\bm e_m^T)\left(\abs{\alpha_2\bm{f}_2(\bm u)}+\abs{\hat\alpha_2{\bm U}\bm{f}_2'(\bm u)}\right)\\
&-(\Diag(\text{sign}(\bm u)) \bm u)^T(\bm e_1\bm e_1^T+\bm e_n\bm e_n^T\otimes \bm H_y)\left(\abs{\alpha_1\bm{f}_1(\bm u)}+\abs{\hat\alpha_1{\bm U}\bm{f}_1'(\bm u)}\right).
\end{align*}
\cpurple{It can be seen that the portion corresponding to each boundary is nonpositive. For example, the one corresponding to the North boundary is $-(\Diag(\text{sign}(\bm u)) \bm u)^T(\bm H_x\otimes \bm e_m\bm e_m^T)\left(\abs{\alpha_2\bm{f}_2(\bm u)}+\abs{\hat\alpha_2{\bm U}\bm{f}_2'(\bm u)}\right)$. Therein, the matrix $(\bm H_x\otimes \bm e_m\bm e_m^T)$ is positive definite. The vector $\abs{\alpha_2\bm{f}_2(\bm u)}+\abs{\hat\alpha_2{\bm U}\bm{f}_2'(\bm u)}$ is nonnegative. The vector $\Diag(\text{sign}(\bm u)) \bm u = \abs{\bm u}$ is also nonnegative. Therefore, the right-hand side of the above estimate is less than or equal to zero. Strict stability is thus guaranteed. The above estimate is analogous to the continuous energy estimate \eqref{eq:advection-diffusion-continuous-energy}.}
\end{proof}

\subsection{The FE scheme}
\label{sec:sbp_form_fem}
We formulate and prove the SBP properties of the FE methods in this section.
\subsubsection*{The Galerkin FE approximation}
Let $\{\mathcal T_h\}_{h>0}$ be a triangulation mesh of $\Omega$, and $\mathbb{P}^1(K)$ is the space of all linear functions on each element $K$ of $\{\mathcal T_h\}$. Denote $C^0(\Omega)$ the space of continuous functions on $\Omega$. We seek the solution of \eqref{eq:advection_diffusion_fem} in the space of weakly differentiable functions $H^1(\Omega)$. To this purpose, we define a scalar-valued Lagrange FE space
\[
V_h:= \{ v(\cdot,t) \in C^0(\Omega)\mid v|_K \in \mathbb{P}^1(K), \forall K \in \mathcal T_h \}.
\]
\cblue{Note that all the degrees of freedom of continuous Galerkin FE methods are located on the nodal points. More specifically, using $\mathbb{P}^1$ elements, they are the vertices of the triangulation $\mathcal T_h$}. A FE approximation of \eqref{eq:advection_diffusion_fem} subject to the boundary condition \eqref{eq:advection_diffusion_fem_bc} can then be formulated as: find $u_h \in V_h$ such that
\begin{equation}
\label{eq:gfem_1}
\begin{aligned}
(\partial_tu_h,v)+\cgreen{( \bm\alpha\grad \cdot \bm{f}(u_h) + \bm f'(u_h) \cdot \bm{\tilde\alpha}\grad u_h, v)} &\\
+(\epsilon\grad u_h, \grad v) =
&\;\frac{1}{2}\int_{\partial\Omega}\big((\bm\alpha\mathbf{n})\cdot \bm{f}(u_h)-\text{sign}(u_h)\abs{(\bm\alpha\mathbf{n})\cdot \bm{f}(u_h)}\big)vds\\
&+\frac{1}{2}\int_{\partial\Omega}\big((\bm{\tilde\alpha}\mathbf{n})\cdot (u_h\bm{f}'(u_h))-\text{sign}(u_h)\abs{(\bm{\tilde\alpha}\mathbf{n})\cdot (u_h\bm{f}'(u_h))}\big)v ds\\
&-\int_{\partial\Omega} g_bv ds, \;\forall v \in V_h.
\end{aligned}
\end{equation}
We can write $V_h = \mbox{span}\{\varphi_i\}_{i=1}^{\cblue{\mathrm N}}$, where $\varphi_i,i=1,\dots,\cblue{\mathrm N}$ are piecewise linear Lagrange basis functions. Inserting the ansatz $u_h = \displaystyle\sum_{j=1}^{\cblue{\mathrm N}}\xi_j(t)\varphi_j(x,y), \xi_i \in \R,j=1,2,\dots,\cblue{\mathrm N}$ and $v = \varphi_i,i=1,2,\dots,\cblue{\mathrm N}$ into \eqref{eq:gfem_1} gives
\begin{equation}
\label{eq:gfem_matrix}
\begin{aligned}
\mathbf{M} \dot{\bm{\xi}}(t) = -\mathbf{C}(u_h)\bm{\xi}(t) - \mathbf{A}\bm{\xi}(t)+\frac{1}{2}\mathbf{R}_M\left((\bm\alpha\mathbf{n})\cdot{\bm f}(\bm{\xi}(t))\right)-\frac{1}{2}\mathscr{R}_M(\text{sign}(u_h))\abs{(\bm\alpha\mathbf{n})\cdot{\bm f}(\bm{\xi}(t))}\\
+\frac{1}{2}\mathscr{R}_M\left((\bm{\tilde\alpha}\mathbf{n})\cdot \bm f'(u_h)\right)\bm{\xi}(t)-\frac{1}{2}\mathscr{R}_M\left(\abs{(\bm{\tilde\alpha}\mathbf{n})\cdot \bm f'(u_h)}\right)\bm{\xi}(t)-\bm{r},
\end{aligned}
\end{equation}
where the over dot denotes the time derivative; $\mathbf{M},\mathbf{C}(\phi),\mathbf{A},\mathbf{R}_M,\mathscr{R}_M(\phi),\bm{r}$ are respectively called the mass matrix, the convective flux matrix, the stiffness matrix, the Robin boundary mass matrix, the nonlinear Robin boundary mass matrix, and the Robin boundary vector. A different notation style is used for $\mathscr{R}_M(\phi)$ to indicate that each entry corresponds to a $\phi$-weighted line integral. The matrices are of full size \cblue{$\mathrm N\times \mathrm N$}.  To be more precise, for $i,j=1,\dots,\cblue{\mathrm N}$,
\begin{align*}
    \mathbf{M}_{ij} & = (\varphi_j,\varphi_i), &
    \mathbf{C}_{ij}(\phi) & = (\bm f'(\phi)\cdot\bm\alpha\grad\varphi_j,\varphi_i)+(\tilde{\bm\alpha}\grad\cdot (\bm f(\phi)\varphi_j),\varphi_i),&
    \mathbf{A}_{ij} & = (\epsilon\grad\varphi_j,\grad\varphi_i),\\
    \left(\mathbf{R}_M\right)_{ij} & = \int_{\partial\Omega}\varphi_j\varphi_ids, &
    \left(\mathscr{R}_M\right)_{ij}(\phi) & = \int_{\partial\Omega}\phi\varphi_j\varphi_ids, &
    \bm{r}_i & = \int_{\partial\Omega} g_{\cgreen b}\varphi_i ds. &
\end{align*}
\cblue{As in the standard continuous Galerkin method, given that $\phi, g_b, \epsilon$ present nodal weights, the above integrals can be evaluated exactly, see Remark \ref{remark:fe_quadrature}.} \cgreen{We write $\mathbf C$ instead of  $\mathbf C(\phi)$ when $\phi \equiv 1$.} Pseudo-code for the assembly of the above matrices can be found in \citet{Larson2009}, and \citet{Dao2019}. 

\cblue{
\begin{remark}\label{remark:fe_quadrature}
To clarify, we use the following quadrature rule to calculate the $\phi$-, $g_b$-, $\epsilon$-dependent matrices,
\[
\int_{K} \phi\varphi_j\varphi_idx\approx \frac{\phi(x_i)+\phi(x_j)}{2} \int_{K}\varphi_j\varphi_idx,
\]
for each element $K$ on $\mathcal T_h$, and
\begin{equation}\label{eq:quadrature_rule}
\int_{E} \phi\varphi_j\varphi_ids\approx \frac{\phi(x_i)+\phi(x_j)}{2} \int_{E}\varphi_j\varphi_ids,
\end{equation}
for each edge $E$ on the boundary $\Gamma$. The above integrals are exact if $\phi, g_b, \epsilon$ are considered as nodal weights. Later we will show that the stability proofs do not rely on these quadrature rules. Thus, any other consistent rules could have been used. The use of \eqref{eq:quadrature_rule} merely simplifies the SATs for coupling FE.
\end{remark}}

\begin{proposition}
Let $\grad \cdot \bm f$ be a linear advection flux, i.e., $\bm f'= \bm a = (a_1,a_2)^T\in\R^2$. Then, the advection operator $\mathbf{M}^{-1}\mathbf{C}$ and the diffusion operator $-\mathbf{M}^{-1}\mathbf{A} + \mathbf{M}^{-1}\mathbf{R}_C^{\partial \Omega},\mathbf{R}^{\partial \Omega}_{C\,ij} = \int_{\partial \Omega} \mathbf{n}\cdot \epsilon\grad\varphi_j \varphi_i ds$, $i,j=1,2,\dots,\cblue{\mathrm N}$ used in \eqref{eq:gfem_matrix} satisfy the summation-by-parts rules \eqref{eq:sbp_properties_first} and \eqref{eq:sbp_properties_second}, with the norm matrix $\mathbf{M}$.
\end{proposition}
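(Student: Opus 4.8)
The plan is to verify the two summation-by-parts identities directly from the variational definitions of the matrices in \eqref{eq:gfem_matrix}, reading off the abstract operators as $\mathcal H=\mathbf M$, $\mathcal Q=\mathbf C$, and, since $\mathcal H^{-1}\mathcal R^{(\epsilon)}=-\mathbf M^{-1}\mathbf A+\mathbf M^{-1}\mathbf R_C^{\partial\Omega}$ with $\mathcal H=\mathbf M$, $\mathcal R^{(\epsilon)}=-\mathbf A+\mathbf R_C^{\partial\Omega}$. The two targets are then (i) $\mathbf C+\mathbf C^T=\mathcal L^T\mathcal B\,\Diag(\bm a\cdot\mathbf n)\,\mathcal L$ for \eqref{eq:sbp_properties_first}, and (ii) $-\mathbf A+\mathbf R_C^{\partial\Omega}=\mathcal L^T\mathcal B\mathcal S-\mathcal A$ with $\mathcal A=\mathcal A^T\geq 0$ and $\mathcal S$ consistent for \eqref{eq:sbp_properties_second}. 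The engine in both cases is the continuous integration by parts \eqref{eq:continuous_intergration_rules} applied to the Lagrange basis functions, with the surviving boundary integrals recognized as the boundary operators.

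For the advection identity I would first simplify $\mathbf C=\mathbf C(1)$. Setting $\phi\equiv 1$, so that $\bm f(u)=\bm a u$ and $\bm f'=\bm a$, in the definition of $\mathbf C_{ij}$ and using $\tilde{\bm\alpha}=(1-\alpha_1,1-\alpha_2)^T$, the two split contributions combine componentwise via $\alpha_k+\tilde\alpha_k=1$, so the splitting weight $\bm\alpha$ cancels and $\mathbf C_{ij}=\int_\Omega(\bm a\cdot\grad\varphi_j)\varphi_i\,dxdy$. Then $(\mathbf C+\mathbf C^T)_{ij}=\int_\Omega\bm a\cdot\grad(\varphi_i\varphi_j)\,dxdy$, and since $\bm a$ is constant this equals $\int_\Omega\grad\cdot(\bm a\,\varphi_i\varphi_j)\,dxdy=\int_{\partial\Omega}(\bm a\cdot\mathbf n)\varphi_i\varphi_j\,ds$ by the divergence theorem. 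Because $\varphi_i|_{\partial\Omega}$ vanishes off the boundary nodes, this integral is nonzero only for boundary indices, so it factors as $\mathcal L^T(\cdots)\mathcal L$; the trapezoidal boundary quadrature of Remark \ref{remark:fe_quadrature}, exact here since $\bm a\cdot\mathbf n$ is constant along each straight boundary edge, then turns the weight into the nodal diagonal factor $\Diag(\bm a\cdot\mathbf n)$ multiplying the boundary mass matrix $\mathcal B$, giving exactly $\mathcal L^T\mathcal B\,\Diag(\bm a\cdot\mathbf n)\,\mathcal L$.

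For the diffusion identity I would match term by term. I take $\mathcal A=\mathbf A$; symmetry is immediate from $\mathbf A_{ij}=(\epsilon\grad\varphi_j,\grad\varphi_i)$, and positivity follows from $\bm\xi^T\mathbf A\bm\xi=\norm{\grad u_h}_\epsilon^2\geq 0$ since $\epsilon>0$, so $\mathcal A=\mathcal A^T\geq 0$ as required. For the boundary piece, $(\mathbf R_C^{\partial\Omega})_{ij}=\int_{\partial\Omega}(\mathbf n\cdot\epsilon\grad\varphi_j)\varphi_i\,ds$ again has nonzero rows only at boundary indices, so $\mathbf R_C^{\partial\Omega}=\mathcal L^T\mathbf R_{\mathrm b}$ for the boundary-row block $\mathbf R_{\mathrm b}$; defining $\mathcal S:=\mathcal B^{-1}\mathbf R_{\mathrm b}$ (legitimate since $\mathcal B$ is symmetric positive definite) yields $\mathcal L^T\mathcal B\mathcal S=\mathbf R_C^{\partial\Omega}$ and hence $-\mathbf A+\mathbf R_C^{\partial\Omega}=\mathcal L^T\mathcal B\mathcal S-\mathcal A$. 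It remains to argue that $\mathcal S$ is consistent: acting on a nodal vector $\bm\xi$, $\mathbf R_{\mathrm b}\bm\xi$ holds the boundary-test-function moments of the exact normal flux $\mathbf n\cdot\epsilon\grad u_h$, and $\mathcal B^{-1}$ converts these moments into the nodal values of its $L^2(\partial\Omega)$ projection onto the boundary trace space, so $\mathcal S$ reproduces $\mathbf n\cdot\epsilon\grad$ to the order of the trace interpolation.

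The routine algebra — the integration by parts and the symmetry/positivity checks — is straightforward; the delicate part is the boundary bookkeeping. Concretely, the main obstacle is matching the FE boundary integrals against the abstract operators $\mathcal L,\mathcal B,\Diag(\bm a\cdot\mathbf n),\mathcal S$: one must ensure that $\bm a\cdot\mathbf n$ can legitimately be read as a nodal diagonal, which is why the quadrature rule and the edgewise-constant normal matter and why corner nodes, where $\mathbf n$ is ambiguous, require the per-side interpretation used throughout the coupling; and one must verify that $\mathcal S=\mathcal B^{-1}\mathbf R_{\mathrm b}$ defines a genuinely \emph{consistent} boundary derivative rather than merely an algebraic factor making the identity hold. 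Establishing this consistency is the crux, and it hinges on interpreting $\mathcal B^{-1}\mathbf R_{\mathrm b}$ as the Galerkin projection of the normal flux onto the boundary trace space.
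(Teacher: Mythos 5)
Your proposal is correct and follows essentially the same route as the paper: integration by parts gives $\mathbf C+\mathbf C^T$ as the $(\bm a\cdot\mathbf n)$-weighted boundary mass matrix, which is then factored through $\mathcal L^T\mathcal B\,\Diag(\bm a\cdot\mathbf n)\,\mathcal L$; for the diffusion part the paper likewise takes $\mathcal A=\mathbf A$ with the same quadratic-form positivity argument, and its $\mathcal S$ (defined via the weak $L^2(\partial\Omega)$ projection $\widetilde{\mathcal S}$) is exactly your $\mathcal B^{-1}\mathbf R_{\mathrm b}$. Your explicit remarks on the cancellation of the splitting weights, the role of the boundary quadrature, and the consistency of $\mathcal S$ only spell out steps the paper leaves implicit.
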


\begin{proof}
We prove that the FE advection operator under the general notation $\mathcal{Q}=\mathbf{C}$ satisfies \eqref{eq:sbp_properties_first}. By integration-by-parts, we have $\mathbf{C}+\mathbf{C}^T=\mathscr{R}_M(\bm a\cdot \bm n)$. We need to show that $\mathscr{R}_M(\bm a\cdot \bm n)=\mathcal{L}^T \mathcal{B} \;\mbox{Diag}(\bm a\cdot\mathbf{n})\; \mathcal{L}$ with the FE formulation of $\mathcal{B}$ and $\mathcal{L}$. For a general boundary selection operator $\mathcal{L}$, each row of $\mathcal{L}$ must be a one-to-one mapping from its ``one'' element with a node on the boundary. The combination of a left multiplier $\mathcal{L}^T$ and a right multiplier $\mathcal{L}$ thus expands the local boundary operators into the full domain size (with zeros on the extended rows and columns). Relation \eqref{eq:sbp_properties_first} becomes obvious with the local boundary integration operator inserted $\mathcal{B}_{ij}\equiv\left(\mathbf{R}_M\right)_{ij}=\int_{\partial\Omega}\varphi_j\varphi_ids$, for $i,j=1,2,\dots,\cblue{\mathrm N_{ \partial \Omega}}$.

For the diffusion operator $\mathcal{R}=-\mathbf{A}+\mathbf{R}_C^W$, we need to show that $-\mathbf{A}+\mathbf{R}_C^{\partial \Omega}=\mathcal{L}^T\mathcal{B} \mathcal S-\mathcal{A}$ with the FE formulation of $\mathcal{B},\mathcal{L},\mathcal S$, and $\mathcal{H}$. Firstly, we identify $\widetilde{\mathcal S}(u_h) \in V_h, u_h \in V_h$ approximating $\mathbf{n}\cdot\epsilon\grad u_h$. The operator $\widetilde{\mathcal S}$ satisfies
\[
(\widetilde{\mathcal S}(u_h)-\mathbf{n}\cdot\epsilon\grad u_h,v)_{\partial\Omega} = 0,\, \forall v \in V_h.
\]
Since $\widetilde{\mathcal S}(u_h) \in V_h$, there exists $\bm{\eta}=(\eta_1,\eta_2,\dots,\eta_{\cblue{\mathrm N}})$ such that $\widetilde{\mathcal S}(u_h) = \sum_{j=1}^{\cblue{\mathrm N}} \eta_j \varphi_j$. Inserting $\widetilde{\mathcal S}(u_h) = \sum_{j=1}^{\cblue{\mathrm N}} \eta_j \varphi_j$ and $u_h=\sum_{j=1}^{\cblue{\mathrm N}}\xi_j\varphi_j$ into the above equation gives
\begin{equation}
\label{eq:fe_sbp_properties_3}
\tilde{\mathcal{B}}\bm\eta = \mathbf{R}_C^{\partial \Omega} \bm{\xi},
\end{equation}
where $\tilde{\mathcal{B}}_{ij}=\int_{\partial\Omega}\varphi_j\varphi_ids,i,j=1,2,\dots,\cblue{\mathrm N}$ and $\mathbf{R}_C^{\partial \Omega}$ is defined above. Since $\int_{\partial\Omega}\varphi_j\varphi_ids = 0$ if either $i$ or $j$ corresponds to a nonboundary node,
\begin{equation}
\label{eq:fe_sbp_properties_4}
\mathcal{L}^T\mathcal{B}\mathcal{L} = \tilde{\mathcal{B}}.
\end{equation}
The multiplication of $\mathcal{L}^T$ and $\mathcal{L}$ is because $\mathcal{B}$ is defined in the SBP framework of size \cblue{$\mathrm N_{\partial\Omega}\times\mathrm N_{\partial\Omega}$}. The matrix $\mathcal S$ is obtained by projecting $\widetilde{\mathcal S}$ onto the boundary, $\mathcal S \bm{\xi}= \mathcal{L}\bm\eta$. Note that $\mathcal S$ is not square because of the matrix dimensions in \eqref{eq:sbp_properties_second}. From \eqref{eq:fe_sbp_properties_3} and \eqref{eq:fe_sbp_properties_4}, we obtain
\begin{equation}
\label{eq:fe_sbp_properties_1}
\mathbf{R}_C^{\partial \Omega} = \mathcal{L}^T\mathcal{B} \mathcal S.
\end{equation}
What is left is to prove the positive semi-definiteness of $\mathbf{A}$. Let $\Psi(u,v):=(\epsilon\grad u,\grad v)_\Omega, u,v \in V$. Given $\bm{\nu}  = (\nu_1,\nu_2,\dots,\nu_\cblue{\mathrm N})^T\in\R^{\cblue{\mathrm N}}$ and $v_h = \sum_{j=1}^{\cblue{\mathrm N}}\nu_j\varphi_j$, we have
\begin{equation}
\label{eq:fe_sbp_properties_2}
\hspace{-.5cm}
\bm\nu^T \mathbf{A} \bm\nu = \sum_{i=1}^{\cblue{\mathrm N}} \sum_{j=1}^{\cblue{\mathrm N}} \nu_i \mathbf{A}_{ij} \nu_j =  \sum_{i=1}^{\cblue{\mathrm N}} \sum_{j=1}^{\cblue{\mathrm N}} \Psi(\nu_i \varphi_i, \nu_j\varphi_j) =  \Psi \left(\sum_{i=1}^{\cblue{\mathrm N}} \nu_i \varphi_i, \sum_{j=1}^{\cblue{\mathrm N}} \nu_j\varphi_j\right) = \Psi (v_h,v_h) = \norm{\grad v}^2_\epsilon \geq 0
\end{equation}
for all $v_h \in V_h$. Therefore, the desired property of $\mathcal{R}$ is achieved.\end{proof}

\begin{remark}
By the energy method, conservation property of the discrete fluxes \eqref{eq:advection_diffusion_fdm_approx} and \eqref{eq:gfem_matrix} approximating the continuous skew-symmetric flux \eqref{eq:skew_symmetric_assumption} is fulfilled under the following condition
\begin{equation}\label{eq:skew_symmetric_sufficient_discrete}
\begin{cases}
\alpha_1\bm f_1 - \tilde\alpha_1 \bm F_1' \bm u = 0,\\
\alpha_2\bm f_2 - \tilde\alpha_2 \bm F_2' \bm u = 0,
\end{cases}
\end{equation}
which naturally holds if the condition \eqref{eq:skew_symmetric_sufficient} of the flux $\bm f$ holds for every point in space.
\end{remark}

\section{Simultaneous-approximation-term technique for interface coupling}
\label{sec:coupling}
This section describes our proposed SAT technique for interface treatment. To make this paper self-contained, we derive the coupling of both FD--FD and FD--FE. The former one is not the main focus, and partially covered in the literature, see \citet{Lundquist2018,Mattsson2010}. \cgreen{Unlike the mentioned references which investigate linear problems, the technique presented in this paper is extended to nonlinear problems, as $\bm f$ can be nonlinear in \eqref{eq:advection_diffusion_fem}}. Description of compatible \textit{SBP-preserving interpolation operators} for nondiagonal norm schemes is included.
\subsection{Continuous analysis}
\label{sec:coupling_continuous}

For simplicity, the analysis is done on the coupling of two domains $\Omega_L$, $\Omega_R$ illustrated in Figure \ref{fig:coupling-interface}, where $u$ and $v$ present the parts of solution on the left and right domains, respectively,
\begin{equation}
\label{eq:advection_diffusion_coupling_continuous}
\begin{aligned}
\frac{\partial u}{\partial t} + \grad \cdot \bm{f}(u) & =  \grad \cdot (\epsilon \grad u), & (x,y) \in \Omega_L,\\
\frac{\partial v}{\partial t} + \grad \cdot \bm{f}(v) & =  \grad \cdot (\epsilon \grad v), & (x,y) \in \Omega_R.
\end{aligned}
\end{equation}

\begin{figure}[pos=h]
    \centering
    \includegraphics[scale=1]{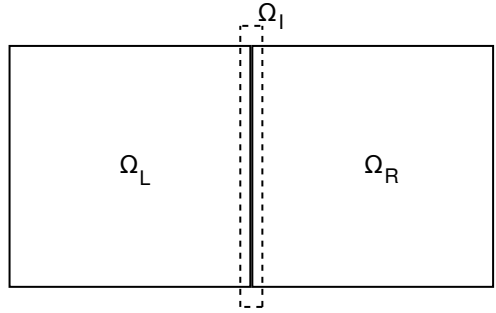}
    \caption{Coupling of two adjoining blocks.}
    \label{fig:coupling-interface}
\end{figure}

The North, South, West boundaries of $\Omega_L$ and the North, East, South boundaries of $\Omega_R$ will be referred to as the ``outer boundary'' and the shared edge of the two adjoining blocks \cpurple{is referred to} the ``interface'', denoted $\Omega_I$. \cblue{With the setup in Figure \ref{fig:coupling-interface},  $\Omega_I$ can be seen both as the East boundary of $\Omega_L$ and the West boundary of $\Omega_R$}. Applying the energy method on \eqref{eq:advection_diffusion_coupling_continuous}, a total energy estimate of the whole solution on $\Omega_L \cup \Omega_R$ reads
\[
\frac{d}{dt}\left(\norm{u}_{\Omega_L}^2+\norm{v}_{\Omega_R}^2\right) +2\norm{\grad u}^2_{\Omega_L,\epsilon} +2\epsilon\norm{\grad v}^2_{\Omega_R,\epsilon} = BT + IT,
\]
where $BT$ contains all the outer boundary terms being bounded with proper boundary treatment as discussed in Section \ref{sec:sbp_operators}, and $IT$ contains the interface terms given by
\begin{align*}
\hspace{-0.5cm}
IT & = \; -\int_{\Omega_I} \alpha_1 u f_1(u) ds+\int_{\Omega_I} \alpha_1 v f_1(v)ds -\int_{\Omega_I} \tilde\alpha_1 u^2 f_1'(u) ds+\int_{\Omega_I} \tilde\alpha_1 v^2 f_1'(v)ds + 2(u,\epsilon u_x)_{\Omega_I}-2(v,\epsilon v_x)_{\Omega_I}.
\end{align*}
Therefore, a sufficient condition for an accurate and energy-conserved coupling is
\begin{equation}
\label{eq:advection_diffusion_continuity}
\begin{cases}
    u = v,\\
    u_x = v_x
\end{cases}
\mbox{ at } x \in \Omega_I.
\end{equation}


\subsection{Necessary properties of interface interpolation for stability and conservation}
\label{sec:coupling_conditions}

Let $\bm u, \bm v$ be some discrete approximate solutions of $u, v$. An SBP-SAT semi-discretization of \eqref{eq:advection_diffusion_coupling_continuous} is given by
\begin{equation}
\label{eq:advection_diffusion_sbp_sat}
\begin{aligned}
\bm u_t + \mathcal{H}_L^{-1}\mathcal{Q}_L\bm\alpha\bm{f}(\bm u) + \tilde{\bm\alpha}\bm {F}'(\bm u)\mathcal{H}_L^{-1}\mathcal{Q}_L\bm u =  \mathcal{H}_L^{-1}\mathcal{R}_L^{(\epsilon)}\bm u + \mbox{SAT}_L+\mbox{SAT}_{IL},\\
\bm v_t + \mathcal{H}_R^{-1}\mathcal{Q}_R\bm\alpha\bm{f}(\bm v) + \tilde{\bm\alpha}\bm {F}'(\bm v)\mathcal{H}_R^{-1}\mathcal{Q}_R\bm v = \mathcal{H}_R^{-1}\mathcal{R}_R^{(\epsilon)}\bm v + \mbox{SAT}_R+\mbox{SAT}_{IR},
\end{aligned}
\end{equation}
where $\mbox{SAT}_{L},\mbox{SAT}_{R}$ present the weak boundary treatment, and $\mbox{SAT}_{IR},\mbox{SAT}_{IR}$ present the weak interface treatment. Another advantage of this weakly forcing technique is that $\mbox{SAT}_{R},\mbox{SAT}_{L},\mbox{SAT}_{IR},\mbox{SAT}_{IL}$ each can be treated separately by the energy method. Therefore, in the following subsections, we assume stable outer boundary treatment and only analyze the estimate terms regarding the additional involvement of $\mbox{SAT}_{IL},\mbox{SAT}_{IR}$.
\begin{figure}[pos=h]
    \centering
    \includegraphics[scale=1.2]{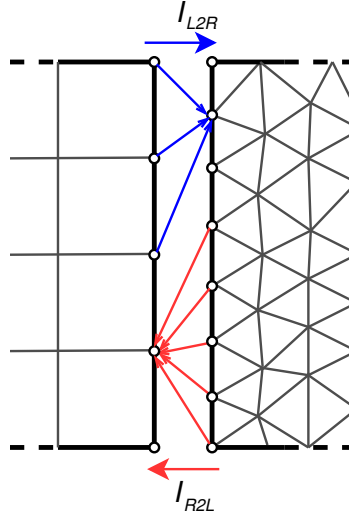}
    \caption{Example of translating nodal values on a nonmatching interface using interpolation operators.}
    \label{fig:nonmatching-interface}
\end{figure}
To prove stability and energy conservation, making use of the following class of interpolation operators to translate the shared-interface \cgreen{nodal values}, first introduced in \citet{Mattsson2010}, is necessary.
\cpurple{
\begin{definition}
\label{def:conservation_property_fd_fd}
Let $\mathcal{H}^L_y,\mathcal{H}^R_y$ be the integration operators over the interface of the left and the right domains, respectively. Let $\bm{I}_{L2R}$ and $\bm{I}_{R2L}$ be two $p$-th order accurate interpolation operators, where $\bm{I}_{L2R}$ maps the interface values from the left domain to the right domain, and $\bm{I}_{R2L}$ does the opposite direction,
\begin{equation}
\label{eq:accuracy_requirement}
\bm u_I = \bm{I}_{R2L} \bm v_I + \mathcal{O}(h^p), \bm v_I = \bm{I}_{L2R} \bm u_I + \mathcal{O}(h^p).
\end{equation}
We call $\left\{\bm{I}_{L2R}, \bm{I}_{R2L}\right\}$ a pair of \textit{SBP-preserving interpolation operators} if
\begin{equation}
\label{eq:conservation_property_fd_fd}
\mathcal{H}_y^R\bm{I}_{L2R} = \bm{I}_{R2L}^T\mathcal{H}_y^L,
\end{equation}
where $p = \min(p_L,p_R)$, and $p_L,p_R$ are the orders of accuracy in boundary closure of the two involving methods.
\end{definition}}
The matrix mappings $\bm{I}_{L2R}$ and $\bm{I}_{R2L}$ are illustrated in Figure \ref{fig:nonmatching-interface}. For FD, $\mathcal{H}_y$ is the $\bm H_y$-norm we defined earlier. For FV, this operator is a diagonal matrix containing the Euclid distances between two neighboring interface nodes, \citet{Lundquist2018}. For dG, this operator is a diagonal matrix presenting integration weights for interface edges, \citet{Kozdon2016}. However, it is well-known that in continuous Galerkin FE schemes, the one-dimensional norm matrix, i.e., the mass matrix, is not diagonal. In Appendix \ref{section:construct_interpolation_operators}, we present two techniques of constructing the pair $\left\{\bm{I}_{L2R},\bm{I}_{R2L}\right\}$ which satisfy and \eqref{eq:accuracy_requirement} and \eqref{eq:conservation_property_fd_fd}. The first technique Appendix \ref{section:construction_by_structure} is similar to
\citet{Mattsson2010}. \cgreen{Appendix \ref{section:shortcut_diagonal_norm} describes a way of acquiring the target interpolation operators through existing diagonal norm techniques.  This construction is beneficial due to the fact that various techniques to construct the necessary diagonal norm interpolation matrices can be found in the literature, e.g., \citet{Almquist2018,Kozdon2016,Lundquist2018,Lundquist2020,Mattsson2010}.}

\subsection{The FD--FD coupling}
We consider the case where FD schemes are used on both $\Omega_L$ and $\Omega_R$.  \cgreen{Let $\bm\omega \in \mathbb R^{\mathrm N}$ be an arbitrary FD solution. We denote by $\bm\omega_I\in\mathbb R^m$ the part of $\bm\omega$ on $\Omega_I$, where $m$ grid points are distributed on $\Omega_I$. For example, in the single-domain notations \eqref{eq:fdm_2d_operators2}, the orientation of $\Omega_L, \Omega_R$ gives that $\bm u_I \equiv \bm u_E, \bm v_I \equiv \bm v_W, (\D_x\bm u)_I \equiv (\D_x\bm u)_E, (\D_x\bm v)_I \equiv (\D_x\bm v)_W$}. The coupling SAT treatment is given by
\cblue{
\begin{equation}
\label{eq:advection_diffusion_sat_interface}
\hspace{-0.5cm}
\begin{aligned}
\mbox{SAT}_{IL} & = \mbox{SAT}_{IL}^{f} + \mbox{SAT}_{IL}^{f'} + \mbox{SAT}_{IL}^{\epsilon},\\
\mbox{SAT}_{IR} & = \mbox{SAT}_{IR}^{f} + \mbox{SAT}_{IR}^{f'} + \mbox{SAT}_{IR}^{\epsilon},
\end{aligned}
\end{equation}
where $\mbox{SAT}_{IL}^{f},  \mbox{SAT}_{IR}^{f}$ weakly couple the flux $f(u) = f (v)$ at $\Omega_I$
\begin{align*}
\mbox{SAT}_{IL}^{f} & = \alpha_L({\bm H_x^L}^{-1} \bm e_n)\otimes \left[(\alpha_1\bm f_1(\bm u))_I-\bm{I}_{R2L}(\alpha_1\bm f_1(\bm v))_I\right],\\
\mbox{SAT}_{IR}^{f} & = \alpha_R({\bm H_x^R}^{-1} \bm e_1) \otimes \left[(\alpha_1\bm f_1(\bm v))_I-\bm{I}_{L2R}(\alpha_1\bm f_1(\bm u))_I\right].
\end{align*}
The terms $\mbox{SAT}_{IL}^{f'}, \mbox{SAT}_{IR}^{f'}$ weakly couple $u=v$ at $\Omega_I$ with the weights of $\alpha_1 f'(u),\alpha_1 f'(v)$,
\begin{align*}
\mbox{SAT}_{IL}^{f'} & =\beta_L({\bm H_x^L}^{-1} \bm e_n)\otimes \left[(\tilde\alpha_1\bm F_1'(\bm u))_I\left(\bm u_I-\bm{I}_{R2L}\bm v_I\right)\right],\\
\mbox{SAT}_{IR}^{f'} & = \beta_R({\bm H_x^R}^{-1} \bm e_1) \otimes \left[(\tilde \alpha_1 \bm F_1'(\bm v))_I\left(\bm v_I-\bm{I}_{L2R}\bm u_I\right)\right],
\end{align*}
where $(\tilde\alpha_1\bm F_1'(\bm u))_I = \Diag(\tilde\alpha_1\bm f_1'(\bm u))_I, (\tilde \alpha_1 \bm F_1'(\bm v))_I = \Diag(\tilde\alpha_1\bm f_1'(\bm v))_I$. Finally, the terms $\mbox{SAT}_{IL}^{\epsilon}, \mbox{SAT}_{IR}^{\epsilon}$ impose the condition $u_x = u_y$  at $\Omega_I$
\begin{align*}
\mbox{SAT}_{IL}^{\epsilon} & = \delta_L ({\bm H_x^L}^{-1} \bm e_n) \otimes \left[\bm\Epsilon_{IL}(\D_x\bm u)_I - \bm{I}_{R2L}\bm\Epsilon_{IR}(\D_x\bm v)_I\right]\\
&+ \sigma_L ({\bm H_x^L}^{-1}\bm d_n^T)\otimes\left[\bm\Epsilon_{IL}(\bm u_I-\bm{I}_{R2L}\bm v_I)\right],\\
\mbox{SAT}_{IR}^{\epsilon} & = \delta_R ({\bm H_x^R}^{-1} \bm e_1) \otimes\left[\bm\Epsilon_{IR}(\D_xv)_I-\bm{I}_{L2R}\bm\Epsilon_{IR}(\D_x\bm u)_I\right]\\
&+ \sigma_R ({\bm H_x^R}^{-1} \bm d_1^T) \otimes\left[\bm\Epsilon_{IR}(\bm v_I-\bm{I}_{L2R}\bm u_I)\right],
\end{align*}
where  ${\bm\Epsilon}_{IL} = \Diag(\bm\epsilon_{IL}),{\bm\Epsilon}_{IR}= \Diag(\bm\epsilon_{IR})$; $\bm\epsilon_{IL}, \bm\epsilon_{IR}$ respectively contain the viscosity coefficients of the left and the right domain evaluated at the grid points on $\Omega_I$. The terms $\mbox{SAT}_{IL}^{\epsilon}, \mbox{SAT}_{IR}^{\epsilon}$ naturally allow interface jumps of the viscosity $\epsilon$, i.e., when $\bm\epsilon_{IL}\neq \bm\epsilon_{IR}$.} \cgreen{This property can be useful in applications involving discontinuous media, or when a significant difference in the amount of artificial viscosity is needed to stabilize the coupled schemes.}\\

\begin{remark}
Though $\bm e_1,\bm e_n,\bm e_m,\bm d_1,\bm d_n,\bm d_m$ all differ in the $x-$ and $y-$ dimensions, or in the left and right domains, the same notations are used to increase readability.
\end{remark}

\begin{theorem}
\label{theorem:choice_linear_fd_fd}
By choosing $\cblue{\alpha_L=\beta_L=1/2, \alpha_R=\beta_R=-1/2}, \delta_L=-\sigma_R, \sigma_L =-\delta_R$ and $\delta_L+\sigma_L = -1$, assuming proper outer boundary treatment, the approximation \eqref{eq:advection_diffusion_sbp_sat} with the interface SATs \eqref{eq:advection_diffusion_sat_interface} is stable.
\end{theorem}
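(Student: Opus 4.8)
The plan is to run the discrete energy method on the coupled system \eqref{eq:advection_diffusion_sbp_sat} and show that, once the interior terms cancel and the outer boundaries are discarded as bounded, the stated parameter choices make the remaining interface contribution vanish identically, yielding conservation and hence stability. First I would form $\frac{d}{dt}\big(\norm{\bm u}_{\mathcal{H}_L}^2 + \norm{\bm v}_{\mathcal{H}_R}^2\big) = 2\bm u^T\mathcal{H}_L\bm u_t + 2\bm v^T\mathcal{H}_R\bm v_t$ and substitute the semi-discrete equations. For the split advection term, the one-dimensional relation $\bm Q+\bm Q^T=\Diag(-1,0,\dots,0,1)$ makes the volume contributions telescope so that, mirroring the calculation in the proof of Theorem \ref{theorem:advection_diffusion_bt_penalties}, only boundary terms carrying $(\alpha_1\bm f_1)$ and $(\tilde\alpha_1\bm U\bm f_1')$ survive; for the diffusion term, \eqref{eq:sbp_properties_second} yields the dissipation $-2\bm u^T\mathcal{A}_L\bm u-2\bm v^T\mathcal{A}_R\bm v\le 0$ plus the consistency boundary terms $\mathcal{L}^T\mathcal{B}\mathcal{S}$. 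Separating the outer contributions (nonpositive by Theorem \ref{theorem:advection_diffusion_bt_penalties}) from those living on $\Omega_I$, the task reduces to showing that the $\Omega_I$ terms — the natural SBP pieces together with the energy contributions of $\mbox{SAT}_{IL}$ and $\mbox{SAT}_{IR}$ — cancel.

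For the advection coupling I would group the $\Omega_I$ terms into a flux family and an $f'$ family, mirroring the continuous $IT$. The energy contribution of $\mbox{SAT}_{IL}^{f}$ is $\alpha_L\bm u_I^T\bm H_y^L\big[(\alpha_1\bm f_1(\bm u))_I - \bm{I}_{R2L}(\alpha_1\bm f_1(\bm v))_I\big]$, with its symmetric counterpart on the right weighted by $\alpha_R$. The choice $\alpha_L=\tfrac12$, $\alpha_R=-\tfrac12$ halves the diagonal natural terms $-\bm u_I^T\bm H_y^L(\alpha_1\bm f_1(\bm u))_I$ and $+\bm v_I^T\bm H_y^R(\alpha_1\bm f_1(\bm v))_I$, while the cross terms are reshaped via the SBP-preserving property \eqref{eq:conservation_property_fd_fd}: since $\bm H_y^R\bm{I}_{L2R}=\bm{I}_{R2L}^T\bm H_y^L$, a term $\bm v_I^T\bm H_y^R\bm{I}_{L2R}(\cdots)_I$ becomes $(\bm{I}_{R2L}\bm v_I)^T\bm H_y^L(\cdots)_I$, so the left and right cross terms pair off against each other and the diagonal remainders against the natural terms. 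The $f'$ family is treated identically with $\beta_L=\tfrac12$, $\beta_R=-\tfrac12$, using that the diagonal $\Diag(\tilde\alpha_1\bm f_1'(\bm u))_I$ passes through \eqref{eq:conservation_property_fd_fd} in the same way.

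For the diffusion coupling the relevant terms are the natural consistency pieces $2\bm u_I^T\bm H_y^L\bm\Epsilon_{IL}(\D_x\bm u)_I$ and $-2\bm v_I^T\bm H_y^R\bm\Epsilon_{IR}(\D_x\bm v)_I$, together with the contributions of $\mbox{SAT}_{IL}^{\epsilon}$ and $\mbox{SAT}_{IR}^{\epsilon}$, which supply gradient-jump terms weighted by $\delta_L,\delta_R$ and solution-jump terms weighted by $\sigma_L,\sigma_R$. This is exactly the second-derivative interface structure of \citet{Mattsson2010}: after applying \eqref{eq:conservation_property_fd_fd} to move $\bm{I}_{L2R},\bm{I}_{R2L}$ across the interface inner products, the conditions $\delta_L=-\sigma_R$ and $\sigma_L=-\delta_R$ force the antisymmetric cancellation of the mixed gradient–solution terms, and $\delta_L+\sigma_L=-1$ fixes the remaining consistency balance so that the viscous interface contribution vanishes (while still admitting the jump $\bm\epsilon_{IL}\neq\bm\epsilon_{IR}$).

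The step I expect to be the main obstacle is the nonlinear advection bookkeeping. Unlike the linear couplings of \citet{Lundquist2018,Mattsson2010}, the split form forces the interface to carry both an $(\alpha_1\bm f_1(\bm u))_I$ piece and a $(\tilde\alpha_1\bm U_I\bm f_1'(\bm u))_I$ piece, and one must verify that $\mbox{SAT}_{IL}^{f}$ and $\mbox{SAT}_{IL}^{f'}$ reproduce exactly these two nonlinear quantities — not a linearised surrogate — so that, under the discrete skew-symmetry condition \eqref{eq:skew_symmetric_sufficient_discrete}, the diagonal remainders cancel cleanly. Tracking the ordering of $\Diag(\cdot)$ and the interpolation operators, and confirming that \eqref{eq:conservation_property_fd_fd} applies verbatim to the $\bm f_1'$-weighted inner products, is where the care lies; once this is settled, the interface contribution is zero, establishing conservation and hence stability.
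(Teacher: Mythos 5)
Your overall architecture matches the paper's proof: the energy method applied to both equations of \eqref{eq:advection_diffusion_sbp_sat} and summed, the decomposition of the interface estimate into a flux family, an $f'$ family, and a viscous family, the use of the SBP-preserving property \eqref{eq:conservation_property_fd_fd} to move $\bm I_{L2R},\bm I_{R2L}$ across the interface inner products, and the viscous closure via $\delta_L=-\sigma_R$, $\sigma_L=-\delta_R$, $\delta_L+\sigma_L=-1$ are all exactly what the paper does. The diagonal (non-interpolated) advection terms also cancel as you intend, against the natural SBP boundary terms --- though note that once the factor of $2$ from $\frac{d}{dt}\norm{\bm u}^2_{\H}=2\bm u^T\H\bm u_t$ is kept, the choice $\alpha_L=1/2$ cancels the natural term \emph{exactly}; there is no ``halved remainder'' that needs to be mopped up later.

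The genuine gap is in the advection cross terms. You assert that within each family ``the left and right cross terms pair off against each other,'' i.e.\ that the interpolated term of $\mbox{SAT}^{f}_{IL}$ cancels that of $\mbox{SAT}^{f}_{IR}$, and that the $f'$ family ``is treated identically.'' For a nonlinear flux this fails: after applying \eqref{eq:conservation_property_fd_fd}, the $\mbox{SAT}^{f}_{IL}$ cross term is a bilinear form pairing $\bm u_I$ with $(\alpha_1\bm f_1(\bm v))_I$, while the $\mbox{SAT}^{f}_{IR}$ cross term pairs $\bm v_I$ with $(\alpha_1\bm f_1(\bm u))_I$; these are not transposes of one another and cannot cancel (they do only in the linear case $f_1(u)=a_1u$). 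The cancellation in the paper, see \eqref{eq:thm_choice_fd_fd_mixed_terms}, is \emph{cross-family}: the interpolated term of $\mbox{SAT}^{f}_{IL}$ pairs with that of $\mbox{SAT}^{f'}_{IR}$ (requiring $\alpha_L=-\beta_R$), and the interpolated term of $\mbox{SAT}^{f'}_{IL}$ pairs with that of $\mbox{SAT}^{f}_{IR}$ (requiring $\beta_L=-\alpha_R$), and both pairings hinge on the discrete skew-symmetry identity \eqref{eq:skew_symmetric_sufficient_discrete}, $\alpha_1\bm f_1(\bm w)=\tilde\alpha_1\Diag(\bm w)\bm f_1'(\bm w)$, to convert an $\bm f_1$-weighted vector into an $\bm f_1'$-weighted one. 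You do invoke \eqref{eq:skew_symmetric_sufficient_discrete}, but you locate its role in the ``diagonal remainders,'' which need no such identity; unless it is redirected to the mixed interpolated terms in this cross-family fashion, the interface contribution does not vanish and the proof does not close.
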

\begin{proof}
Applying the energy method on each equation of \eqref{eq:advection_diffusion_sbp_sat} and adding them \cblue{together}, a total energy estimate can be written as
\begin{align*}
\frac{d}{dt} \norm{\bm u}_{\H}^2 + 2\bm u^T(\bm{A}\otimes \bm H_y^L)\bm u + 2\bm u^T(\bm H_x^L\otimes \bm{A})\bm u&\\+ \frac{d}{dt} \norm{\bm v}_{\H}^2 + 2\bm v^T(\bm{A}\otimes \bm H_y^R)\bm v+ 2\bm v^T(\bm H_x^R\otimes \bm A)\bm v & = BT + IT,
\end{align*}
where the outer boundary terms all contained in $BT$ are assumed to be finely tuned as given in \eqref{eq:advection_diffusion_fdm_sat} and Theorem \ref{theorem:advection_diffusion_bt_penalties}; and the interface terms consist of
\cblue{
\[
IT = IT^f + IT^{f'} + IT^{\epsilon} + IT^f_{SAT} + IT^{f'}_{SAT} + IT^{\epsilon}_{SAT},
\]
where $IT^f, IT^{f'}$ estimate the energy growth contributed by the discrete operators approximating the flux $\grad\cdot \bm f(u)$; $IT^{\epsilon}$ concerns the energy growth by the operators approximating $\grad \cdot (\epsilon \grad u)$; the terms $IT^f_{SAT}, IT^{f'}_{SAT}, IT^{\epsilon}_{SAT}$ are the contributions of the added SATs \eqref{eq:advection_diffusion_sat_interface}. Concretely, the terms $\mbox{SAT}_{IL}^{f},  \mbox{SAT}_{IR}^{f}$, resulting in the estimate $IT^f_{SAT}$, are used to control $IT^f$,
\begin{align*}
IT^f = & -\bm u^T(\bm e_n\bm e_n^T\otimes \bm H_y^L)(\alpha_1\bm f_1(\bm u))+\bm v^T(\bm e_1\bm e_1^T\otimes \bm H_y^R)(\alpha_1\bm f_1(\bm v)),\\
IT^{f}_{SAT} =&\;2\alpha_L\bm u^T(\bm e_n\bm e_n^T\otimes \bm H_y^L)\alpha_1\bm f_1(\bm u)-2\alpha_L\bm u^T(\bm e_n\bm e_1^T\otimes \bm H_y^L\bm{I}_{R2L})\alpha_1\bm f_1(\bm v)\\
&+2\alpha_R\bm v^T(\bm e_1\bm e_1^T\otimes \bm H_y^R)\alpha_1\bm f_1(\bm v)-2\alpha_R\bm v^T(\bm e_1\bm e_n^T\otimes \bm H_y^R\bm{I}_{L2R})\alpha_1\bm f_1(\bm u).
\end{align*}
The estimate $IT^f$ is derived from the SBP property in Definition \ref{sec:sbp_form_fdm}. By substituting $\alpha_L=1/2, \alpha_R=-1/2$, one can see that $IT^f$ is canceled by $IT^f_{SAT}$.  Accordingly, the terms $\mbox{SAT}_{IL}^{f'},  \mbox{SAT}_{IR}^{f'}$ are used to control $IT^{f'}$
\begin{align*}
IT^{f'} = & -\bm u^T(\bm e_n\bm e_n^T\otimes \bm H_y^L)(\bm U\tilde\alpha_1\bm f_1'(\bm u))+\bm v^T(\bm e_1\bm e_1^T\otimes \bm H_y^R)(\bm V\tilde\alpha_1\bm f_1'(\bm v)),\\
IT^{f'}_{SAT} =&\;2\beta_L\bm u^T(\bm e_n\bm e_n^T\otimes \bm H_y^L)(\bm U\tilde\alpha_1\bm f_1'(\bm u))-2\beta_L\bm v^T(\bm e_1\bm e_n^T\otimes \bm{I}_{R2L}^T\bm H_y^L)(\bm U\tilde\alpha_1\bm f_1'(\bm u))\\
&+2\beta_R\bm v^T(\bm e_1\bm e_1^T\otimes \bm H_y^R)(\bm V\tilde\alpha_1\bm f_1'(\bm v))-2\beta_R\bm u^T(\bm e_n\bm e_1^T\otimes \bm{I}_{L2R}^T\bm H_y^R)(\bm V\tilde\alpha_1\bm f_1'(\bm v)),
\end{align*}
in which $IT^{f'}$ is also canceled by $IT^{f'}_{SAT}$ with $\beta_L=1/2, \beta_R=-1/2$. The remaining mixed terms containing $\bm{I}_{L2R},  \bm{I}_{R2L}$ are
\begin{equation}\label{eq:thm_choice_fd_fd_mixed_terms}
\hspace{-0.5cm}
\begin{aligned}
IT^f  + IT^{f'} + IT^f_{SAT} + IT^{f'}_{SAT} =  & -2\alpha_L\bm u^T(\bm e_n\bm e_1^T\otimes \bm H_y^L\bm{I}_{R2L})\alpha_1\bm f_1(\bm v) -2\beta_R\bm u^T(\bm e_n\bm e_1^T\otimes \bm{I}_{L2R}^T\bm H_y^R)(\bm V\tilde\alpha_1\bm f_1'(\bm v))\\
 &-2\beta_L\bm v^T(\bm e_1\bm e_n^T\otimes \bm{I}_{R2L}^T\bm H_y^L)(\bm U\tilde\alpha_1\bm f_1'(\bm u))-2\alpha_R\bm v^T(\bm e_1\bm e_n^T\otimes \bm H_y^R\bm{I}_{L2R})\alpha_1\bm f_1(\bm u).
\end{aligned}
\end{equation}
The SBP property \eqref{eq:conservation_property_fd_fd} implies that $\bm{I}_{L2R}^T\bm H_y^R = \bm H_y^L\bm{I}_{R2L}$. Moreover, we have $\alpha_1 \bm f_1(\bm u) = \bm U\tilde\alpha_1\bm f_1'(\bm u)$ and  $\alpha_1 \bm f_1(\bm v) = \bm V\tilde\alpha_1\bm f_1'(\bm v)$ by the discrete skew-symmetry property \eqref{eq:skew_symmetric_sufficient_discrete}. The above choice of the penalty parameters makes $\alpha_L = - \beta_R$ and $\beta_L = - \alpha_R$. Therefore, the sum of two terms in the first line of \eqref{eq:thm_choice_fd_fd_mixed_terms} is zero. The sum of the two remaining terms in the second line is also zero. $SAT^f_{IL}$ and $SAT^{f'}_{IR}$ together imply conservation to the interface interpolation due to the discrete skew-symmetry \eqref{eq:skew_symmetric_sufficient_discrete}. The same can be said for the other pair $SAT^f_{IR}$ and $SAT^{f'}_{IL}$. Similarly, the estimate $IT^{\epsilon} _{SAT}$  of $\mbox{SAT}_{IL}^{\epsilon} ,  \mbox{SAT}_{IR}^{\epsilon} $ are used to control $IT^{\epsilon}$
\begin{align*}
IT^{\epsilon} = & \; \bm u^T(\bm\Epsilon_L\bm e_n\bm d_n+(\bm\Epsilon_L\bm e_n\bm d_n)^T\otimes \bm H_y^L)\bm u+ \bm v^T(-\bm\Epsilon_R\bm e_1\bm d_1-(\bm\Epsilon_R\bm e_1\bm d_1)^T\otimes \bm H_y^R)\bm v\\
IT^{\epsilon}_{SAT} = & \;\delta_L\left[\bm u^T(\bm\Epsilon_L\bm e_n\bm d_n+(\bm\Epsilon_L\bm e_n\bm d_n)^T\otimes \bm H_y^L)\bm u-\bm u^T(\bm\Epsilon_R\bm e_n\bm d_1+(\bm\Epsilon_R\bm e_n\bm d_1)^T\otimes \bm H_y^L\bm{I}_{R2L})\bm v\right]\\
& +\sigma_L\left[\bm u^T(\bm\Epsilon_L\bm e_n\bm d_n+(\bm\Epsilon_L\bm e_n\bm d_n)^T\otimes \bm H_y^L)\bm u-\bm u^T(\bm\Epsilon_L\bm e_1\bm d_n+(\bm\Epsilon_L\bm e_1\bm d_n)^T\otimes \bm H_y^L\bm{I}_{R2L})\bm v\right]\\
& +\delta_R\left[\bm v^T(\bm\Epsilon_R\bm e_1\bm d_1+(\bm\Epsilon_R\bm e_1\bm d_1)^T\otimes \bm H_y^R)\bm v-\bm v^T(\bm\Epsilon_L\bm e_1\bm d_n+(\bm\Epsilon_L\bm e_1\bm d_n)^T\otimes \bm H_y^R\bm{I}_{L2R})\bm u\right]\\
& +\sigma_R\left[\bm v^T(\bm\Epsilon_R\bm e_1\bm d_1+(\bm\Epsilon_R\bm e_1\bm d_1)^T\otimes \bm H_y^R)\bm v-\bm v^T(\bm\Epsilon_R\bm e_n\bm d_1+(\bm\Epsilon_R\bm e_n\bm d_1)^T\otimes \bm H_y^R\bm{I}_{L2R})\bm u\right].
\end{align*}
One can see that the mixed terms, e.g., $\delta_L\bm u^T(\bm\Epsilon_R\bm e_n\bm d_1+(\bm\Epsilon_R\bm e_n\bm d_1)^T\otimes \bm H_y^L\bm{I}_{R2L})$ and $\sigma_R \bm v^T(\bm\Epsilon_R\bm e_n\bm d_1+(\bm\Epsilon_R\bm e_n\bm d_1)^T\otimes \bm H_y^R\bm{I}_{L2R})\bm u$, are canceled as long as we choose $\delta_L=-\sigma_R, \sigma_L =-\delta_R$. To cancel $IT^{\epsilon}$, we also need $\delta_L+\sigma_L = -1$. Therefore, it is sufficient to conclude that with the above choice of parameters, all the interface terms vanish $IT \equiv 0$. The interface treatment is thus stable and conservative.}
\end{proof}

\subsection{The FD--FE coupling}
\label{sec:coupling_fd_fe}
This section contains the main contribution of this paper. \cpurple{The following definitions align FE schemes with the coupling to FD schemes.}

\begin{definition}
\label{def:fem_boundary_selection_operators}
\cpurple{Let $m$ be the number of degrees of freedom on the concerned part of the boundary, and $\bm L$ be a binary matrix of size $m\times \cblue{\mathrm N}$. The matrix $\bm L$ is called a \textit{FE boundary selection operator} if   the following properties are satisfied:}
\begin{itemize}
\item[(i)] Each row of \cblue{$\bm L$} contains exactly one ``1'' element uniquely corresponding to a node on the concerned part of the boundary,
\item[(ii)] \cblue{$\bm L$} rearranges the boundary nodes to the same spatial order of the coupled solution.
\end{itemize}
\end{definition}

The following properties of the above boundary selection operators are useful for the stability proofs in this section.

\cblue{
\begin{property}
\label{property:L_W}
Let $\bm L$ be a FE boundary selection operator. The following statements hold
\begin{itemize}
\item[(i)] $\bm E\bm L = \bm F \implies \bm E =\bm F \bm L^T$,
\item[(ii)] $\bm F \bm L^T = \bm E \implies \bm F = \bm E\bm L$\; if\; $(\bm I_{\cblue{\mathrm N}}-\bm L^T\bm L) \bm F^T = 0$,
\end{itemize}
\end{property}
\noindent where $\bm E, \bm F$ are matrices of appropriate sizes.

\begin{proof}
We prove (i). From $\bm E\bm L=\bm F$, right multiplying both sides by $\bm L^T$ gives  $\bm E\bm L\bm L^T=\bm F\bm L^T$. Since each row of $\bm L$ contains exactly one ``1'' element corresponding to the same position on each column of $\bm L^T$, it is clear that $\bm L\bm L^T=\bm I_m$. We have the first statement proven.\\

Similarly, we prove (ii) by right multiplying both sides by $L$ to obtain $\bm F\bm L^T\bm L=\bm E\bm L$. However, $\bm L^T\bm L\neq \bm I_{\cblue{\mathrm N}}$. The additional constraint $(\bm I_{\cblue{\mathrm N}}-\bm L^T\bm L) F^T = 0$ is necessary for getting $\bm F \bm L^T = \bm E$. Assuming that $\bm F$ is a square matrix \cblue{of size $\mathrm N \times \mathrm N$}, this additional constraint can be understood as, all the columns of $\bm F$ of which indices corresponding the nonboundary nodes are zero.
\end{proof}}

\cblue{We consider the setup in Figure \ref{fig:coupling-interface} where FD is used on $\Omega_L$ and FE is used on $\Omega_R$. The interface $\Omega_I$ is thus both the West boundary of $\Omega_R$ and the East boundary of $\Omega_L$. Therefore, the FE operators activating on the West boundary, e.g., $\bm L_W$, and the FD operators activating on the East boundary, e.g., $(\bm e_n^T\otimes\bm  I_m)$, are frequently used in this section. The notations $\bm H_x,\bm H_y$ are used instead of $\bm H_x^L,\bm H_y^L$ since there is only one domain involving FD}. \cred{Definition \ref{def:conservation_property_fd_fd} is restated below in the FD--FE coupling context.}
\begin{definition}
\label{def:conservation_property_fd_fe}
$\{\bm{I}_{L2R}, \bm{I}_{R2L}\}$ is called a pair of \textit{SBP-preserving FD--FE interpolation operators} if
\begin{equation}
\label{eq:conservation_property_fd_fe}
\mathbf{M}_I\bm{I}_{L2R} = \bm{I}_{R2L}^T\bm H_y,
\end{equation}
where $\displaystyle \mathbf{M}_I=\bm L_W\mathbf{R}_M^W\bm L_W^T$. Recall that $\cblue{\mathrm N}\times \cblue{\mathrm N}$ matrix $(\mathbf{R}_{M}^W)_{ij}=\int_{\Gamma_W}\varphi_j\varphi_ids,i,j=1,2,\dots,\cblue{\mathrm N}$.
\end{definition}
\cred{Notice that $\mathbf{M}_I$ is analogous to the norm $\bm H_y^R$ in the FD--FD coupling}. Despite it not being diagonal, $\mathbf{M}_I$ has nice properties including being symmetric and positive definite. For example, if the node distribution on $\Gamma_W$ is equidistant with step length $\bm H_y$, and $\mathbb{P}^1$ is used, then $\mathbf{M}_I$ is a banded matrix,
\[
\mathbf{M}_I = h_y\begin{bmatrix}
1/3 & 1/6 & 0 & 0 & \hdots & 0 \\
1/6 & 2/3 & 1/6 & 0 & \hdots & 0 \\
0 & 1/6 & 2/3 & 1/6 & \hdots & 0 \\
\vdots & 0 & \ddots & \ddots & \ddots & \vdots\\
0 & \hdots & 0 & 1/6 & 2/3 & 1/6\\
0 & \hdots & 0 & 0 & 1/6 & 1/3
\end{bmatrix}.
\]

\cgreen{Let $\bm\omega$  be a discrete vector containing nodal values of a FE solution in $V_h$. We denote $\bm \omega_I = \bm L_W \bm \omega$ the vector contains values of $\bm\omega$ at the nodes on $\Omega_I$}. We prove that the following FD--FE coupling SATs impose stability to \eqref{eq:advection_diffusion_sbp_sat},
\cblue{
\begin{equation}
\label{eq:advection_diffusion_fe_interface}
\begin{aligned}
\mbox{SAT}_{IL} & = \mbox{SAT}^{f}_{IL}  + \mbox{SAT}^{f'}_{IL} + \mbox{SAT}^{\epsilon}_{IL},\\
\mbox{SAT}_{IR} & = \mbox{SAT}^{f}_{IR} + \mbox{SAT}^{f'}_{IR} + \mbox{SAT}^{\epsilon}_{IR}.
\end{aligned}
\end{equation}
The terms $\mbox{SAT}^{f}_{IL}, \mbox{SAT}^{f'}_{IL}, \mbox{SAT}^{f}_{IR}, \mbox{SAT}^{f'}_{IR}$ couple the flux
\[
\begin{aligned}
\mbox{SAT}^{f}_{IL} & = \alpha_L ({\bm H_x^L}^{-1} \bm e_n)\otimes \left[(\alpha_1\bm f_1(\bm u))_I-\bm{I}_{R2L}(\alpha_1\bm f_1(\bm v))_I\right],\\
\mbox{SAT}^{f'}_{IL} & + \beta_L ({\bm H_x^L}^{-1} \bm e_n)\otimes \left[(\tilde\alpha_1\bm F_1'(\bm u))_I(\bm u_I-\bm{I}_{R2L}\bm v_I)\right],
\end{aligned}
\]
\vspace{-0.3cm}
\begin{equation}\label{eq:advection_diffusion_fe_interface_general}
\begin{aligned}
\mbox{SAT}^{f}_{IR}  & =\alpha_R \mathbf{M}^{-1}\mathbf{R}_M^W\bm L_W^T\left[(\alpha_1\bm f_1(\bm v))_I-\bm{I}_{L2R}(\alpha_1\bm f_1(\bm u))_I\right],\\
\mbox{SAT}^{f'}_{IR}&+ \beta_R\mathbf{M}^{-1}\Big[\mathscr{R}_M^W\left( \tilde\alpha_1\bm f_1'(\bm v)\right)\bm v-\bm V\mathbf{R}_M^W\bm L_W^T(\tilde\alpha_1\bm F_1'(\bm v))_I\bm{I}_{L2R}\bm u_I\Big],
\end{aligned}
\end{equation}
and the terms $\mbox{SAT}^{\epsilon}_{IL}, \mbox{SAT}^{\epsilon}_{IR}$ couple the diffusion term}
\[\cblue{
\begin{aligned}
\mbox{SAT}^{\epsilon}_{IL} & = \delta_L(H_x^{-1} \bm e_n) \otimes (\bm\Epsilon_L(\D_x\bm u)_I - \bm{I}_{R2L}\mathbf{D}_x^{W}(\epsilon)\bm v)+\sigma_L(H_x^{-1}\bm d_n^T)\otimes\bm\Epsilon_L(\bm u_I-\bm{I}_{R2L}\bm L_W\bm v),
\end{aligned}}
\]
\[\cblue{
\begin{aligned}
\mbox{SAT}^{\epsilon}_{IR} & = \delta_R\mathbf{M}^{-1}\mathbf{R}_M^W\bm L_W^T(\mathbf{D}_x^W(\epsilon)\bm v-\bm{I}_{L2R}\bm\Epsilon_L(\D_x\bm u)_I) +\sigma_R\mathbf{M}^{-1}(-\mathbf{R}^W_C(\epsilon))^T\bm L_W^T(\bm v_I - \bm{I}_{L2R}\bm u_I),
\end{aligned}}
\]
where $\mathbf{R}^W_{C\,ij}(\epsilon) = \int_{\Gamma_W} \mathbf{n}\cdot\epsilon\grad\varphi_j \varphi_i ds, \mathbf{D}_x^W(\epsilon) = -\mathbf{M}_I^{-1}\bm L_W\mathbf{R}^W_C(\epsilon)$. \cgreen{Other notations are defined in Section \ref{sec:sbp_form_fem}}.  The assembly of the above matrices is described in \citet{Larson2009,Dao2019}. Note that the minus signs coming with $\mathbf{R}_C^W(\epsilon)$ in $\mathbf{D}_x^W(\epsilon)$ and the third component of $\mbox{SAT}_{IR}$ are in order to make $\mathbf{R}_C^W(\epsilon)$ analogous to the FD operator $(\bm d_1\otimes\bm  I_m)$ since $\mathbf{n}_W$ is a negative vector.

\begin{lemma}
\label{lemma:R_M_W_nonlinear}
The following equality is fulfilled if the quadrature rule \eqref{eq:quadrature_rule} is used to calculate $\mathscr{R}_M(\phi)$,
\[
\mathscr{R}_M(\phi) = \frac{1}{2}\left(\mathbf{R}_M \mathbf \Phi +\mathbf \Phi\mathbf{R}_M\right),
\]
where $\mathbf \Phi$ is the diagonal matrix of which diagonal is $\left(\phi(x_2), \phi(x_2),\dots, \phi(x_{\mathrm N})\right)^T$.
\end{lemma}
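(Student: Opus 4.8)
The plan is to verify the claimed matrix identity entrywise, exploiting two facts: that $\mathbf{\Phi}$ is diagonal, and that the quadrature weights $\phi(x_i),\phi(x_j)$ attached to a fixed $(i,j)$ entry are constants, independent of the edge over which one integrates. No energy argument or structural lemma is needed; the whole statement reduces to matching scalar expressions.

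First I would fix indices $i,j$ and write the boundary integral defining $(\mathscr{R}_M(\phi))_{ij} = \int_{\partial\Omega}\phi\varphi_j\varphi_i\,ds$ as a sum over the boundary edges $E$, applying the quadrature rule \eqref{eq:quadrature_rule} on each edge. The key observation is that $\phi(x_i)$ and $\phi(x_j)$ are \emph{fixed nodal values}, not the function $\phi$ appearing under the integral, so the factor $\frac{1}{2}\left(\phi(x_i)+\phi(x_j)\right)$ is independent of $E$ and can be pulled outside the edge summation. What remains is $\sum_E\int_E\varphi_j\varphi_i\,ds = \int_{\partial\Omega}\varphi_j\varphi_i\,ds = (\mathbf{R}_M)_{ij}$, giving $(\mathscr{R}_M(\phi))_{ij} = \frac{1}{2}\left(\phi(x_i)+\phi(x_j)\right)(\mathbf{R}_M)_{ij}$.

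Next I would expand the right-hand side entrywise. Because $\mathbf{\Phi}$ is diagonal with $(\mathbf{\Phi})_{kk}=\phi(x_k)$, each matrix product collapses to a single surviving term: $(\mathbf{R}_M\mathbf{\Phi})_{ij} = \phi(x_j)(\mathbf{R}_M)_{ij}$ and $(\mathbf{\Phi}\mathbf{R}_M)_{ij} = \phi(x_i)(\mathbf{R}_M)_{ij}$. Summing these and halving reproduces $\frac{1}{2}\left(\phi(x_i)+\phi(x_j)\right)(\mathbf{R}_M)_{ij}$, which coincides with the left-hand side for every pair $(i,j)$ — including the trivial case of interior-node indices, where $\varphi_i$ or $\varphi_j$ vanishes on $\partial\Omega$ and both sides are zero. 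Since the matrices agree entrywise, they are equal.

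There is no genuine obstacle here; the computation is elementary. The only point demanding care is the factorization in the first step: one must recognize that the quadrature is applied edge-by-edge, yet the nodal weights are globally constant per entry, so they commute with the edge summation. If $\phi$ were integrated exactly rather than replaced by its nodal interpolant, this clean factorization would fail, which is precisely why the statement is conditioned on the use of the quadrature rule \eqref{eq:quadrature_rule}.
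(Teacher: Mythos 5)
Your proof is correct and is exactly the argument the paper intends: the paper disposes of this lemma with the single remark that ``the result is direct by writing \eqref{eq:quadrature_rule} in the matrix form,'' and your entrywise computation --- pulling the edge-independent nodal weight $\tfrac{1}{2}\left(\phi(x_i)+\phi(x_j)\right)$ out of the edge sum and matching it against $\tfrac{1}{2}\left(\mathbf{R}_M\mathbf{\Phi}+\mathbf{\Phi}\mathbf{R}_M\right)_{ij}$ --- is precisely what that remark elides. No gap; your version simply supplies the details the authors omitted.
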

The matrices $\mathscr{R}_M(\phi), \mathbf{R}_M$ are defined in Section \ref{sec:sbp_form_fem}. The result of Lemma \ref{lemma:R_M_W_nonlinear} is direct by writing \eqref{eq:quadrature_rule} in the matrix form. Restricting the integrals on $\Gamma_W$, we have 
\[
\mathscr{R}_M^W(\phi) = \frac{1}{2}\left(\mathbf{R}_M^W \mathbf \Phi +\mathbf \Phi\mathbf{R}_M^W\right).
\]

Using the quadrature rule \eqref{eq:quadrature_rule}, the terms $\mbox{SAT}^{f}_{IR}, \mbox{SAT}^{f'}_{IR}$ \eqref{eq:advection_diffusion_fe_interface_general} can then be simplified as
\begin{equation}\label{eq:advection_diffusion_fe_interface_simplified}
\begin{aligned}
\mbox{SAT}^{f}_{IR}  & = \alpha_R\mathbf{M}^{-1}\mathbf{R}_M^W\bm L_W^T\left[(\alpha_1\bm f_1(\bm v))_I-\bm{I}_{L2R}(\alpha_1\bm f_1(\bm u))_I\right],\\
\mbox{SAT}^{f'}_{IR}  &= \beta_R\mathbf{M}^{-1}(\tilde\alpha_1\bm F_1'(\bm v))\mathbf{R}_M^W\bm L_W^T\left[(\bm v_I-\bm{I}_{L2R}\bm u_I)\right],
\end{aligned}
\end{equation}
which is in the same form as the diagonal norm coupling \eqref{eq:advection_diffusion_sat_interface}. We show the relevance of the quadrature rule \eqref{eq:quadrature_rule} with the SAT \eqref{eq:advection_diffusion_fe_interface_simplified} in the Theorem \ref{theorem:advection_diffusion_fe_interface}.

Strict stability for the FD--FE coupling is shown in the theorem below.
\begin{theorem}
\label{theorem:advection_diffusion_fe_interface}
By choosing $\cblue{\alpha_L=\beta_L=1/2, \alpha_R=\beta_R=-1/2}, \delta_L=-\sigma_R, \sigma_L =-\delta_R$ and $\delta_L+\sigma_L = -1$, assuming proper outer boundary treatment, the approximation \eqref{eq:advection_diffusion_sbp_sat} with the interface SATs \eqref{eq:advection_diffusion_fe_interface} is stable.
\end{theorem}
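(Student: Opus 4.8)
The plan is to reproduce the energy cancellation of Theorem~\ref{theorem:choice_linear_fd_fd} with the right-domain diagonal norm $\bm H_y^R$ replaced by the nondiagonal interface mass matrix $\mathbf{M}_I=\bm L_W\mathbf{R}_M^W\bm L_W^T$, and to verify that every identity used there survives the passage to the finite element norm $\mathbf{M}$. First I would apply the discrete energy method to the two equations of \eqref{eq:advection_diffusion_sbp_sat}: the left (FD) block is treated verbatim as in Theorem~\ref{theorem:choice_linear_fd_fd}, while for the right (FE) block I form $\tfrac{d}{dt}\norm{\bm v}_{\mathbf M}^2=2\bm v^T\mathbf M\bm v_t$ and multiply the FE equation through by $\bm v^T\mathbf M$, which removes every $\mathbf M^{-1}$ prefactor in the SATs. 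The SBP advection and diffusion properties of the FE operators (proven earlier with norm $\mathbf M$) together with the discrete skew-symmetry \eqref{eq:skew_symmetric_sufficient_discrete} cancel all interior contributions and produce the nonnegative dissipation $\norm{\grad v}^2_\epsilon$, leaving
\[
\frac{d}{dt}\norm{\bm u}_{\H}^2+\frac{d}{dt}\norm{\bm v}_{\mathbf M}^2+(\text{nonnegative diffusion})=BT+IT,
\]
with $BT$ bounded by the assumed outer treatment and $IT$ split as $IT^{f}+IT^{f'}+IT^{\epsilon}+IT^{f}_{SAT}+IT^{f'}_{SAT}+IT^{\epsilon}_{SAT}$ exactly as in Theorem~\ref{theorem:choice_linear_fd_fd}.

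The decisive step is to bring the FE flux SATs into diagonal-norm form. I would invoke Lemma~\ref{lemma:R_M_W_nonlinear} to rewrite $\mathscr{R}_M^W(\tilde\alpha_1\bm f_1'(\bm v))$ as the symmetric combination $\tfrac12(\mathbf{R}_M^W\mathbf\Phi+\mathbf\Phi\mathbf{R}_M^W)$, turning $\mbox{SAT}^{f}_{IR},\mbox{SAT}^{f'}_{IR}$ into the form \eqref{eq:advection_diffusion_fe_interface_simplified}. The next reduction is the identity $\mathbf{R}_M^W\bm L_W^T=\bm L_W^T\mathbf{M}_I$, which follows from $\mathbf{R}_M^W=\bm L_W^T\mathbf{M}_I\bm L_W$ and $\bm L_W\bm L_W^T=\bm I_m$ (Property~\ref{property:L_W}); combined with $\bm v_I=\bm L_W\bm v$ it collapses each FE interface contraction onto $\mathbf{M}_I$, so that every term acquires the shape of its FD counterpart in Theorem~\ref{theorem:choice_linear_fd_fd} with $\mathbf{M}_I$ in place of $\bm H_y^R$. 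With $\alpha_L=\beta_L=\tfrac12$, $\alpha_R=\beta_R=-\tfrac12$ the non-mixed flux contributions cancel $IT^{f}+IT^{f'}$, and the mixed interpolation terms cancel pairwise upon using the SBP-preserving property \eqref{eq:conservation_property_fd_fe}, $\mathbf{M}_I\bm{I}_{L2R}=\bm{I}_{R2L}^T\bm H_y$, and $\alpha_1\bm f_1(\bm v)=\bm V\tilde\alpha_1\bm f_1'(\bm v)$ from \eqref{eq:skew_symmetric_sufficient_discrete}.

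The diffusion interface terms $IT^{\epsilon}+IT^{\epsilon}_{SAT}$ follow the same route as in Theorem~\ref{theorem:choice_linear_fd_fd}. Here the relation $\mathbf{D}_x^W(\epsilon)=-\mathbf{M}_I^{-1}\bm L_W\mathbf{R}_C^W(\epsilon)$ makes $\mathbf{R}_C^W(\epsilon)$ the FE analogue of the FD boundary-derivative operator, while the SBP diffusion identity $\mathbf{R}_C^{\partial\Omega}=\mathcal{L}^T\mathcal{B}\mathcal S$ in \eqref{eq:fe_sbp_properties_1} supplies the correct boundary structure. The choice $\delta_L=-\sigma_R$, $\sigma_L=-\delta_R$, $\delta_L+\sigma_L=-1$ then cancels both the mixed interpolation terms and the diagonal diffusion terms, yielding $IT\equiv0$ and hence strict stability and conservation.

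The main obstacle, and the only genuinely new ingredient relative to the FD--FD case, is handling the nondiagonal norm $\mathbf M$ and the nonlinear boundary matrix $\mathscr{R}_M^W$ at once. Two points need care: first, guaranteeing that the surviving interface contractions are governed by $\mathbf{M}_I$ rather than by the full $\mathbf M$ --- this is where Property~\ref{property:L_W}, in particular the vanishing of the non-boundary columns, is indispensable; and second, checking that the symmetrization of Lemma~\ref{lemma:R_M_W_nonlinear} is compatible with the discrete skew-symmetry \eqref{eq:skew_symmetric_sufficient_discrete}, so that $\mbox{SAT}^{f}_{IR}$ and $\mbox{SAT}^{f'}_{IR}$ merge into a single conservative interface flux. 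Once these identities are secured, the cancellation is structurally identical to Theorem~\ref{theorem:choice_linear_fd_fd}.
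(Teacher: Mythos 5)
Your proposal is correct and follows essentially the same route as the paper's proof: the energy method with the same decomposition $IT = IT^{f}+IT^{f'}+IT^{\epsilon}+IT^{f}_{SAT}+IT^{f'}_{SAT}+IT^{\epsilon}_{SAT}$, the symmetrization of $\mathscr{R}_M^W$ via Lemma~\ref{lemma:R_M_W_nonlinear}, Property~\ref{property:L_W} to collapse the interface contractions onto $\mathbf{M}_I$, and the SBP-preserving property \eqref{eq:conservation_property_fd_fe} together with the discrete skew-symmetry \eqref{eq:skew_symmetric_sufficient_discrete} to cancel the mixed interpolation terms. Your explicit identity $\mathbf{R}_M^W\bm L_W^T=\bm L_W^T\mathbf{M}_I$ is a slightly cleaner statement of what the paper uses implicitly, but the argument is the same.
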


\begin{proof}
A total energy estimate can be obtained by applying the energy method to both schemes
\begin{align*}
\frac{d}{dt} \norm{\bm u}_{\H}^2 + 2\epsilon \bm u^T(A\otimes H_y)\bm u + 2\epsilon \bm u^T(H_x\otimes A)\bm u+\frac{d}{dt}\norm{\bm v}^2_{\mathbf{M}}+2\epsilon \bm v^T\mathbf{A}\bm v & = BT + IT.
\end{align*}
\cblue{
The structure of this proof is similar to the proof of Theorem \ref{theorem:choice_linear_fd_fd}. We only focus on the key differences coming from FE.  The interface energy estimate reads
\[
IT = IT^{f} + IT^{f'} + IT^{\epsilon} + IT^f_{SAT} + IT^{f'}_{SAT} + IT^{\epsilon}_{SAT},
\]
where $IT^{f}, IT^{f'}$ come from the operators approximating the flux; $IT^{\epsilon}$ comes from the operators approximating the diffusion term;  $IT^f_{SAT}, IT^{f'}_{SAT}, IT^{\epsilon}_{SAT}$ come from the corresponding SATs. The following estimate is derived from the SBP property of the discrete operators
\begin{align*}
IT^{f} + IT^{f'} = & \; -\bm u^T(\bm e_n\bm e_n^T\otimes \bm H_y^L)(\alpha_1\bm f_1(\bm u)+\bm U\tilde\alpha_1\bm f_1'(\bm u))+\bm v^T\mathbf{R}_M^W\alpha_1 \bm f_1(\bm v)+\bm v^T\mathscr{R}_M^W(\tilde\alpha_1\bm f_1'(\bm v))\bm v.
\end{align*}
The involvement of $\mbox{SAT}^{f}_{IL}, \mbox{SAT}^{f'}_{IL}, \mbox{SAT}^{f}_{IR}, \mbox{SAT}^{f'}_{IR}$ \eqref{eq:advection_diffusion_fe_interface_simplified} mutually stabilizes the discrete flux
\begin{align*}
IT^{f}_{SAT} = &\;2\alpha_L\bm u^T(\bm e_n\bm e_n^T\otimes \bm H_y^L)\alpha_1\bm f_1(\bm u)-2\alpha_L\bm u^T(e_n\otimes H_y\bm{I}_{R2L}\bm L_W)\alpha_1\bm f_1(\bm v)\\
&+2\alpha_R\frac{(\alpha_1 \bm f_1(\bm v))^T\mathbf{R}_M^W\bm v+\bm v^T\mathbf{R}_M^W\alpha_1 \bm f_1(\bm v)}{2}+2\alpha_R\bm v^T(\bm e_n^T\otimes \mathbf{R}_M^W\bm L_W^T\bm{I}_{L2R})\alpha_1 \bm f_1(\bm u),\\
IT^{f'}_{SAT} = &\;2\beta_L\bm u^T(\bm e_n\bm e_n^T\otimes H_y)\tilde\alpha_1\bm f_1'(\bm u)-2\beta_L\bm v^T(e_n^T\otimes \bm L_W^T\bm{I}_{R2L}^TH_y){\bm U}\tilde\alpha_1\bm f_1'(\bm u)\\
&+2\beta_R\frac{(\bm V\tilde\alpha_1\bm f_1'(\bm v))^T\mathbf{R}_M^W\bm v+\bm v^T\mathbf{R}_M^W\bm V\tilde\alpha_1\bm f_1'(\bm v)}{2}-2\beta_R\bm u^T(\bm e_n\otimes \bm{I}_{L2R}^T\bm L_W\mathbf{R}_M^W)\bar v\tilde\alpha_1\bm f_1'(\bm v).
\end{align*}
Property \ref{property:L_W} is used to handle the transpose terms. Under Lemma \ref{lemma:R_M_W_nonlinear}, we have
\[
\bm v^T\mathscr{R}_M^W(\tilde\alpha_1\bm f_1'(\bm v)) \bm v=\bm v^T\frac{1}{2}\left(\mathbf{R}_M^W\tilde\alpha_1\bm F_1'(\bm v) + \tilde\alpha_1\bm F_1'(\bm v)\mathbf{R}_M^W  \right)\bm v = \frac{\bm v^T\mathbf{R}_M^W\bm V\tilde\alpha_1\bm f_1'(\bm v) + (\bm V\tilde\alpha_1\bm f_1'(\bm v))^T\mathbf{R}_M^W\bm v}{2}.
\]
Thus, the choice of $\alpha_L=\beta_L=1/2, \alpha_R=\beta_R=-1/2$  makes $-(IT^{f}+IT^{f'})$ appear in the sum $(IT^{f}_{SAT} + IT^{f'}_{SAT})$. Notice that Lemma \ref{lemma:R_M_W_nonlinear} is only necessary to show that the energy term $\bm v^T\mathscr{R}_M^W(\tilde\alpha_1\bm f_1'(\bm v)) \bm v$ coming from the discrete operator is canceled.  Because $\mathscr{R}_M^W$ is symmetric, the energy terms are canceled with the same choice of $\beta_L, \beta_R$ as long as the quadrature rule to compute $\mathscr{R}_M^W$ in the scheme and in the coupling SATs are the same. Similar to the proof of Theorem \ref{theorem:choice_linear_fd_fd}, the mixed terms containing the interpolation operators $\bm{I}_{L2R}, \bm{I}_{R2L}$ are also canceled because of the interpolation property \eqref{eq:conservation_property_fd_fe} and the discrete skew-symmetry \eqref{eq:skew_symmetric_sufficient_discrete}. The coupling of the diffusion terms can be proved to be stable in a similar manner.
\begin{align*}
IT^{\epsilon} = &\;\bm u^T(\bm\Epsilon_L\bm e_n\bm d_n+(\bm\Epsilon_L\bm e_n\bm d_n)^T\otimes H_y)\bm u + \bm v^T (\mathbf{R}^W_C + (\mathbf{R}^W_C)^T)\bm v,\\
 IT^{\epsilon}_{SAT} = &\; \delta_L \left[ \bm u^T(\bm\Epsilon_L\bm e_n\bm d_n+(\bm\Epsilon_L\bm e_n \bm d_n)^T \otimes H_y)\bm u -\bm u^T(\bm e_n \otimes H_y\bm{I}_{R2L}\mathbf{D}_x^W)\bm v-\bm v^T(\bm e_n^T\otimes (H_y\bm{I}_{R2L}\mathbf{D}_x^W)^T)u\right]\\
& + \sigma_L\left[\bm u^T(\bm\Epsilon_L\bm e_n\bm d_n+(\bm\Epsilon_L\bm e_n\bm d_n)^T\otimes H_y)\bm u-\bm u^T(\bm d_n^T\otimes H_y\bm{I}_{R2L}\bm\Epsilon_L\bm L_W)\bm v-\bm v^T(\bm d_n\otimes \bm L_W^T\bm\Epsilon_L\bm{I}_{R2L}^TH_y)\bm u\right]\\
& + \delta_R\left[-\bm v^T(\mathbf{R}_C^W+(\mathbf{R}_C^W)^T)\bm v-\bm v^T\mathbf{R}_M^W\bm L_W^T\bm{I}_{L2R}\bm\Epsilon_L(d_n\otimes\bm I_m)\bm u-\bm u^T(\bm d_n^T\otimes\bm  I_m)\bm\Epsilon_L\bm{I}_{L2R}^T\bm L_W\mathbf{R}_M^W\bm v)\right]\\
& - \sigma_R\bm v^T((\mathbf{R}^W_C)^T\bm L_W^T\bm L_W + \bm L_W^T\bm L_W\mathbf{R}^W_C)\bm v+\sigma_R \bm v^T(\mathbf{R}_C^W)^T\bm L_W^T\bm{I}_{L2R}(\bm e_n^T\otimes\bm  I_m)\bm u\\
& + \sigma_R \bm u^T(\bm e_n \otimes\bm I_m) \bm{I}_{L2R}^T\bm L_W\mathbf{R}_C^W\bm v.
\end{align*}
Due to the fact that $\mathbf{R}_C^W$ contains only zeros on the rows which do not correspond to the West boundary, we have $\bm L_W^T\bm L_W\mathbf{R}^W_C=\mathbf{R}^W_C$. As long as  $\delta_L=-\sigma_R, \sigma_L =-\delta_R$ and $\delta_L+\sigma_L = -1$ in $IT^{\epsilon}_{SAT}$, $IT^{\epsilon}$ is canceled by $IT^{\epsilon}_{SAT}$.  To show that the remaining terms in $IT^{\epsilon}_{SAT}$ containing $\bm{I}_{L2R}, \bm{I}_{R2L}$ also vanish, we need to show that
\begin{equation}\label{eq:advection_diffusion_fe_interface_proof_2}
H_y\bm{I}_{R2L}\mathbf{D}_x^W=-\bm{I}_{L2R}^T\bm L_W\mathbf{R}_C^W.
\end{equation}
Indeed, from the property \eqref{eq:conservation_property_fd_fe} which is $H_y\bm{I}_{R2L} = \bm{I}_{L2R}^T \mathbf{M}_I$, we have $H_y\bm{I}_{R2L} (-\mathbf{M}_I^{-1}\bm L_W \mathbf{R}_C^W) = -\bm{I}_{L2R}^T\bm L_W \mathbf{R}_C^W$, where $-\mathbf{M}_I^{-1}\bm L_W \mathbf{R}_C^W = \mathbf{D}_x^W$.
}
It it sufficient to conclude that the interface terms vanish $IT\equiv 0$ with the above choice of parameters.  \cblue{This result comes from the construction of the SATs}. The interface treatment is thus stable and conservative.
\end{proof}

\begin{remark}
Similar to the FD--FD coupling, the use of SBP-preserving interpolation operators used for FD--FE can also be shown to reserve the full order of accuracy. It is known that when narrow stencil FD operators of $q$th-order are used, the accuracy at boundary closures is of $q/2$th-order resulting in $(q/2+1)$th-order of overall accuracy. However, since the solution space $\mathbb{P}^1$ is currently used for FE, we expect that with roughly the same number of degrees of freedom of each method, the overall accuracy is bounded by second-order \cblue{even} if higher-order FD operators are used.
\end{remark}

\section{Numerical examples}
This section includes numerical experiments of the proposed coupling technique on the linear advection-diffusion equation, and a nonlinear viscous Burgers' equation.
\label{sec:numerical_results}

\subsection{Linear advection-diffusion equation}
The following linear equation is obtained by letting $\bm f = \bm a = (a_1,a_2)^T \in \R^2$ being a constant vector,
\begin{equation}\label{eq:linear_fem}
\begin{aligned}
u_t + \bm a \cdot \grad u & = \epsilon\grad\cdot(\grad u), && (x,y) \in \Omega,\\
u & = u_0(x,y), && (x,y) \in \Omega, t = 0.
\end{aligned}
\end{equation}
Inserting $\bm f = \bm a$ into \eqref{eq:skew_symmetric_precise} gives $\alpha_1=\alpha_2=\frac{1}{2}$. From \eqref{eq:advection_diffusion_fem_bc}, a set of well-posed boundary condition reads
\begin{equation}
\label{eq:linear_fem_bc}
\frac{1}{2} (\bm a\cdot\mathbf{n}-\abs{\bm a\cdot\mathbf{n}})u-\epsilon \mathbf{n}\cdot \grad u = g(x,y,t), \quad(x,y) \in \partial \Omega, t > 0.
\end{equation}
We consider the diffusion coefficient $\epsilon$ as a constant.

\subsubsection*{The FD--FD coupling}
A FD--FD semi-discretization of the coupling problem \eqref{eq:advection_diffusion_coupling_continuous} under the governing equation \eqref{eq:linear_fem} is given by
\begin{equation}
\label{eq:linear_sbp_sat}
\begin{aligned}
\bm u_t &= -a_1\D_x\bm u - a_2\D_y\bm u +\epsilon \D_{2x}\bm u + \epsilon \D_{2y}\bm u + \mbox{SAT}_L+\mbox{SAT}_{IL},\\
\bm v_t &= -a_1\D_x\bm v - a_2\D_y\bm v + \epsilon \D_{2x}\bm v + \epsilon \D_{2y}\bm v + \mbox{SAT}_R+\mbox{SAT}_{IR},
\end{aligned}
\end{equation}
where interface SATs are
\begin{equation}
\label{eq:linear_sat_interface}
\begin{aligned}
\mbox{SAT}_{IL} & = \frac{1}{2}a_1 ({\bm H_x^L}^{-1} \bm e_n)\otimes (\bm u_I-\bm I_{R2L}\bm v_I)\\
&+ \delta_L\epsilon ({\bm H_x^L}^{-1} \bm e_n) \otimes ((\D_x\bm u)_I - \bm I_{R2L}(\D_x\bm v)_I) + \sigma_L\epsilon ({\bm H_x^L}^{-1}\bm d_n^T)\otimes(\bm u_I-\bm I_{R2L}\bm v_I),\\
\mbox{SAT}_{IR} & = -\frac{1}{2}a_1({\bm H_x^R}^{-1}\bm e_1) \otimes (\bm v_I-\bm I_{L2R}\bm u_I)\\
&+\delta_R \epsilon({\bm H_x^R}^{-1}\bm e_1) \otimes ((\D_x\bm v)_I-\bm I_{L2R}(\D_x\bm u)_I)+ \sigma_R \epsilon({\bm H_x^R}^{-1} \bm d_1^T) \otimes (\bm v_I-\bm I_{L2R}\bm u_I),
\end{aligned}
\end{equation}
where $\delta_L=-\sigma_R, \sigma_L =-\delta_R, \delta_L+\sigma_L = -1$.
\subsubsection*{The FD--FE coupling}
A FD--FE semi-discretization of the coupling problem \eqref{eq:advection_diffusion_coupling_continuous} under the governing equation \eqref{eq:linear_fem} is given by
\begin{equation}
\label{eq:linear_fe_sbp_sat}
\begin{aligned}
\bm u_t &= -a_1\D_x\bm u - a_2\D_y\bm u +\epsilon \D_{2x}\bm u + \epsilon \D_{2y}\bm u + \mbox{SAT}_L+\mbox{SAT}_{IL},\\
\bm v_t &= -\mathbf{M}^{-1}\mathbf{C}\bm v-\epsilon \mathbf{M}^{-1}\mathbf{A}\bm v+  \frac{1}{2} (a_2-\abs{a_2})\mathbf R_M^N\bm v+  \frac{1}{2} (a_1-\abs{a_1})\mathbf R_M^E\bm v+\frac{1}{2} (-a_2-\abs{a_2})\mathbf R_M^S\bm v - \bm r +\mbox{SAT}_{IR},
\end{aligned}
\end{equation}
where $\mathbf{R}_M^N,\mathbf{R}_M^E,\mathbf{R}_M^S$ are obtained by limiting the integration of $\mathbf{R}_M$ to $\Gamma_N,\Gamma_E,\Gamma_S$, respectively. The coupling penalty terms are given by
\begin{equation}
\label{eq:advection_diffusion_sat_interface2}
\begin{aligned}
\mbox{SAT}_{IL} & = \frac{1}{2}a_1 (\bm H_x^{-1} \bm e_n)\otimes (\bm u_I-\bm{I}_{R2L}\bm L_W\bm v)\\
&+ \delta_L\epsilon(\bm H_x^{-1} \bm e_n) \otimes ((\D_x\bm u)_I - \bm{I}_{R2L}\mathbf{D}_x^W\bm v)+\sigma_L\epsilon(\bm H_x^{-1}\bm d_n^T)\otimes(\bm u_I-\bm{I}_{R2L}\bm L_W\bm v),\\
\mbox{SAT}_{IR} & =  -\frac{1}{2}a_1 \mathbf{M}^{-1}\mathbf{R}_M^W\bm L_W^T(\bm L_W\bm v - \bm{I}_{L2R}\bm u_I)\\
&+\delta_R\epsilon\mathbf{M}^{-1}\mathbf{R}_M^W\bm L_W^T(\mathbf{D}_x^W\bm v-\bm{I}_{L2R}(\D_x\bm u)_I) +\sigma_R\epsilon\mathbf{M}^{-1}(-\mathbf{R}^W_C)^T\bm L_W^T(\bm L_W\bm v - \bm{I}_{L2R}\bm u_I),
\end{aligned}
\end{equation}
where $\delta_L=-\sigma_R, \sigma_L =-\delta_R, \delta_L+\sigma_L = -1$.
\subsubsection{Eigenvalue analysis for validation of the interface treatment}
In this subsection, we show a numerical validation of Theorem \ref{theorem:advection_diffusion_fe_interface}. Consider the model problem \eqref{eq:advection_diffusion_coupling_continuous} subject to the boundary condition \eqref{eq:linear_fem_bc}. Second-order FD operators are used on $\Omega_L = [-1,1]\times[-2,0]$, and a FE scheme using piecewise linear basis functions ($\mathbb{P}^1$) is used on $\Omega_R = [-1,1]\times[0,2]$. The FD domain is discretized with $41^2$ grid points ($m=n=41$). The discretization of FE scheme uses 2796 vertices for an unstructured mesh using Delaunay triangulation with maximum cell diameter $h_{\max}=0.025$ and interface nodes matched to the FD grid, intended for an equally distributed image of eigenvalues. The solution is transported from the left to the right by setting $\bm a = (1,0)^T$.

Figure \ref{fig:fd-fe-2d} shows another example using $41^2$ grid points on the FD domain, and an unstructured mesh using Delaunay triangulation with maximum element diameter $h_{max}=0.2$ on the FE domain (nonmatching coupling with 41 FD grid points, and 51 FE nodal points on the interface). \cblue{The reference solution,
\begin{equation}\label{eq:analytic_inviscid_solution}
u(x,y) = r_1\exp\left(\frac{(x-x_0-a_1t)^2+(y-y_0-a_2t)^2}{r_2^2}\right),
\end{equation}
is initialized at $(x_0,y_0)^T = (-1,0)^T$ and transported from left to right by setting $\bm a = (1,0)^T$}. The diffusion parameter is set to zero $\epsilon = 0$. \cblue{The other parameters are $r_1 = 1, r_2 = 0.2$.} Sixth-order FD operators and $\mathbb{P}^1$ finite elements are used. The solution is examined at $t = 0$ and $t=1$.

\begin{figure}[pos=h]
    \begin{subfigure}{\textwidth}
    \centering
    \includegraphics[scale=0.2]{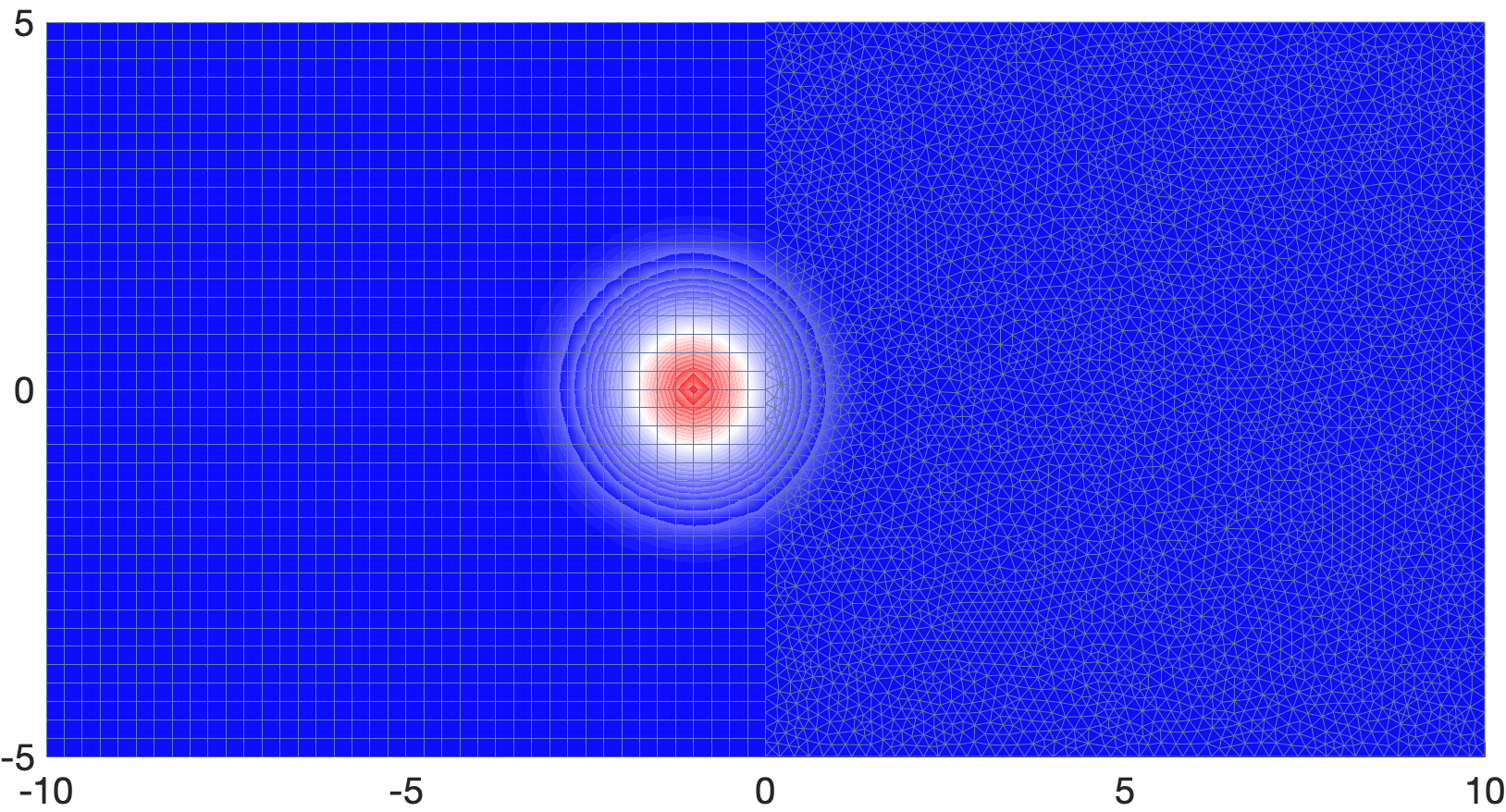}
    \caption{Initial data $t = 0$.}
    \end{subfigure}
    \begin{subfigure}{\textwidth}
    \centering
    \includegraphics[scale=0.2]{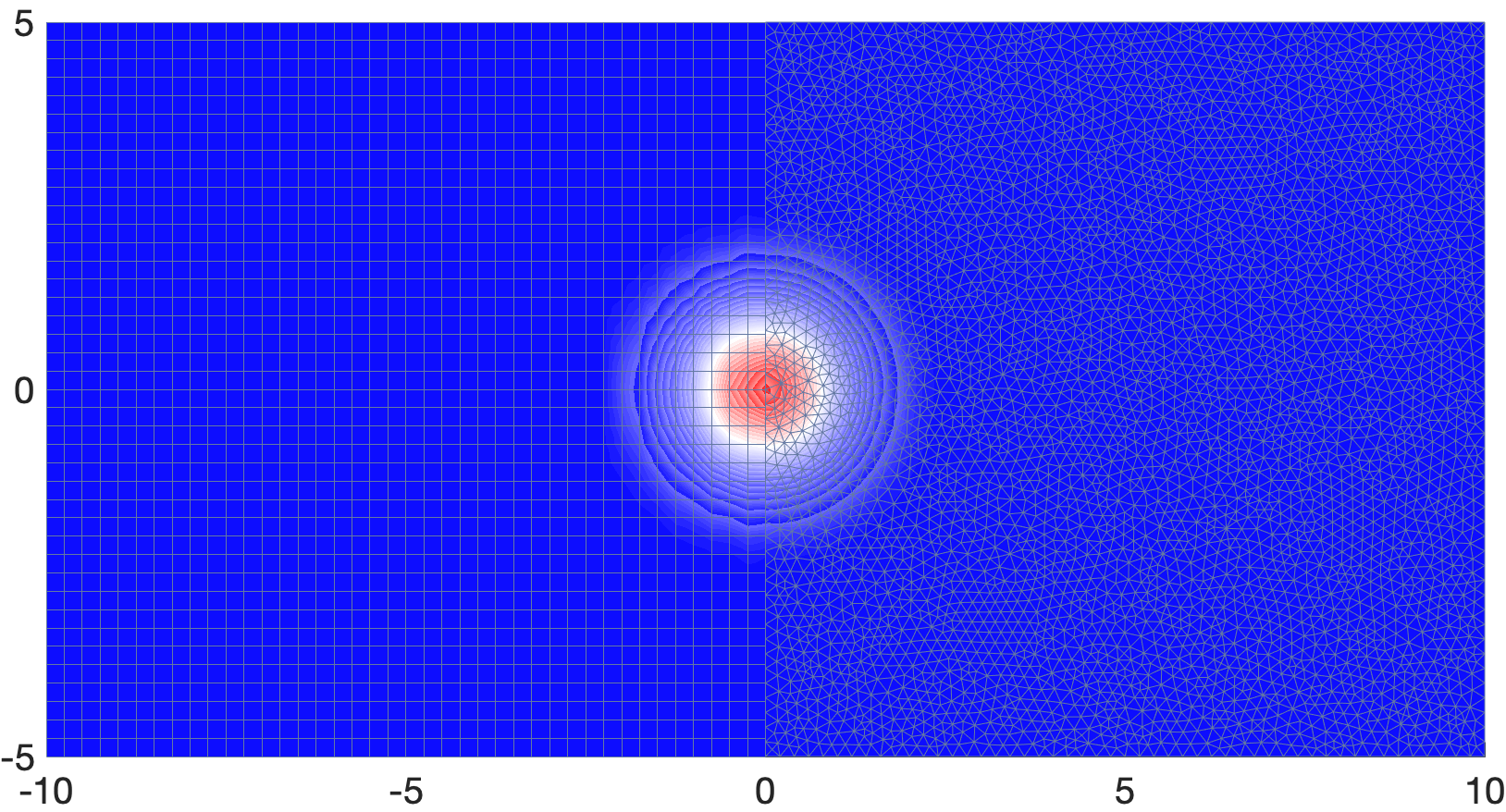}
    \caption{Computed solution at $t = 1$.}
    \end{subfigure}
    \caption{An example of FD--FE nonconforming multiblock coupling for the inviscid problem $\epsilon=0$ using a sixth-order FD scheme on the left domain (discretized with $41^2$ grid points) and a FE scheme on the right domain (using 8552 $\mathbb{P}^1$ elements). Notice the unmatching placement of nodal points on the interface.}
    \label{fig:fd-fe-2d}
\end{figure}

A semi-discrete problem to \eqref{eq:advection_diffusion_fem} can be written in the form
\cblue{
\begin{equation}
\label{eq:linear-semi-discrete-form}
\begin{bmatrix}
\bm u\\\bm v
\end{bmatrix}_t
= \mathbf{F} \begin{bmatrix}
\bm u\\\bm v
\end{bmatrix},
\end{equation}}
where $\mathbf{F}$ is a square real-valued matrix. In Figure \ref{fig:spectrum_linear_FD-FE}, the eigenvalues of $\mathbf F$ are plotted for several choices of the penalty parameters imposing the interface conditions in Theorem \ref{theorem:advection_diffusion_fe_interface}. \cblue{The horizontal axis indicates the real part of the eigenvalues. The imaginary axis is vertical.} Figure \ref{fig:spectrum_linear_FD-FE}(a) and \ref{fig:spectrum_linear_FD-FE}(c) are with two different sets of penalty parameters taking into account a diffusion term $\epsilon=0.01$ both satisfying Theorem \ref{theorem:advection_diffusion_fe_interface} while the parameters used in Figure \ref{fig:spectrum_linear_FD-FE}(b) and \ref{fig:spectrum_linear_FD-FE}(d) violate the theorem. It can be clearly seen that in the former cases, all the yielding eigenvalues have nonpositive real part. Therefore, with some explicit method to integrate in time, for instance Runge-Kutta 4, and a sufficiently small time step, the fully discrete scheme is stable.

\begin{figure}[pos=h]
    \centering
    \begin{subfigure}{0.49\textwidth}
    \includegraphics[scale=0.3]{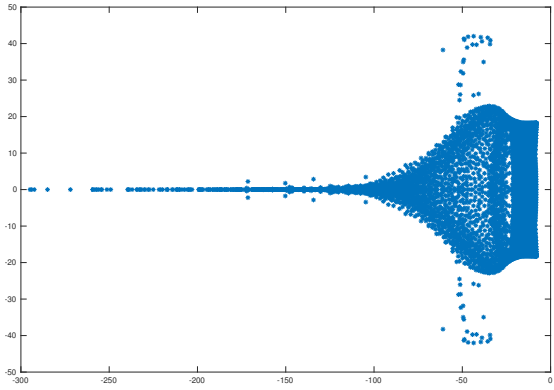}
    \caption{With a set of correct penalty parameters $\alpha_L = 0.5,\;\;\;\;\;$ $\alpha_R=-0.5, \delta_L=-\epsilon, \sigma_R=-\epsilon, \sigma_L=\delta_R=0$.}
    \end{subfigure}
    \begin{subfigure}{0.49\textwidth}
    \includegraphics[scale=0.3]{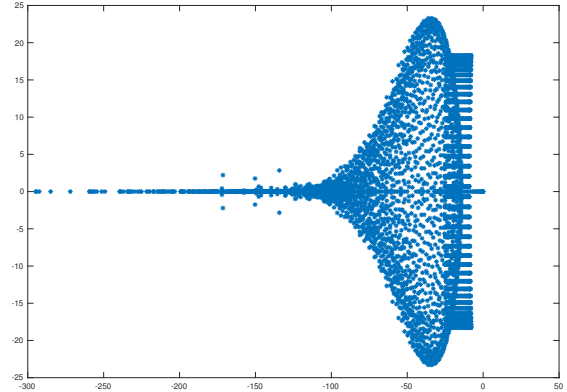}
    \caption{Without coupling terms $\alpha_{L,R}=\delta_{L,R}=\sigma_{L,R}=0$. There exist positive eigenvalues.}
    \end{subfigure}
    \begin{subfigure}{0.49\textwidth}
    \includegraphics[scale=0.3]{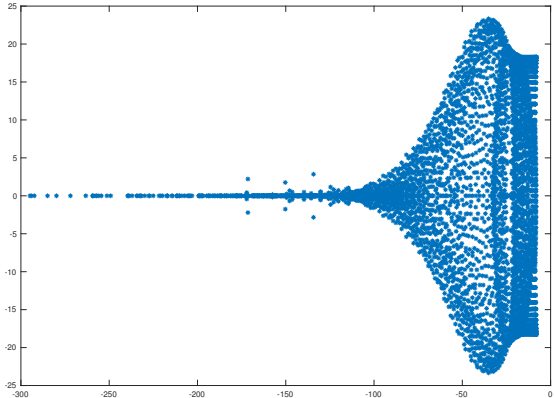}
    \caption{With a set of correct penalty parameters $\alpha_L = 0.5,\;\;\;\;\;$, $\alpha_R=-0.5, \delta_L=\sigma_L = -\epsilon/2, \delta_R=\sigma_R=\epsilon/2$.}
    \end{subfigure}
    \begin{subfigure}{0.49\textwidth}
    \includegraphics[scale=0.3]{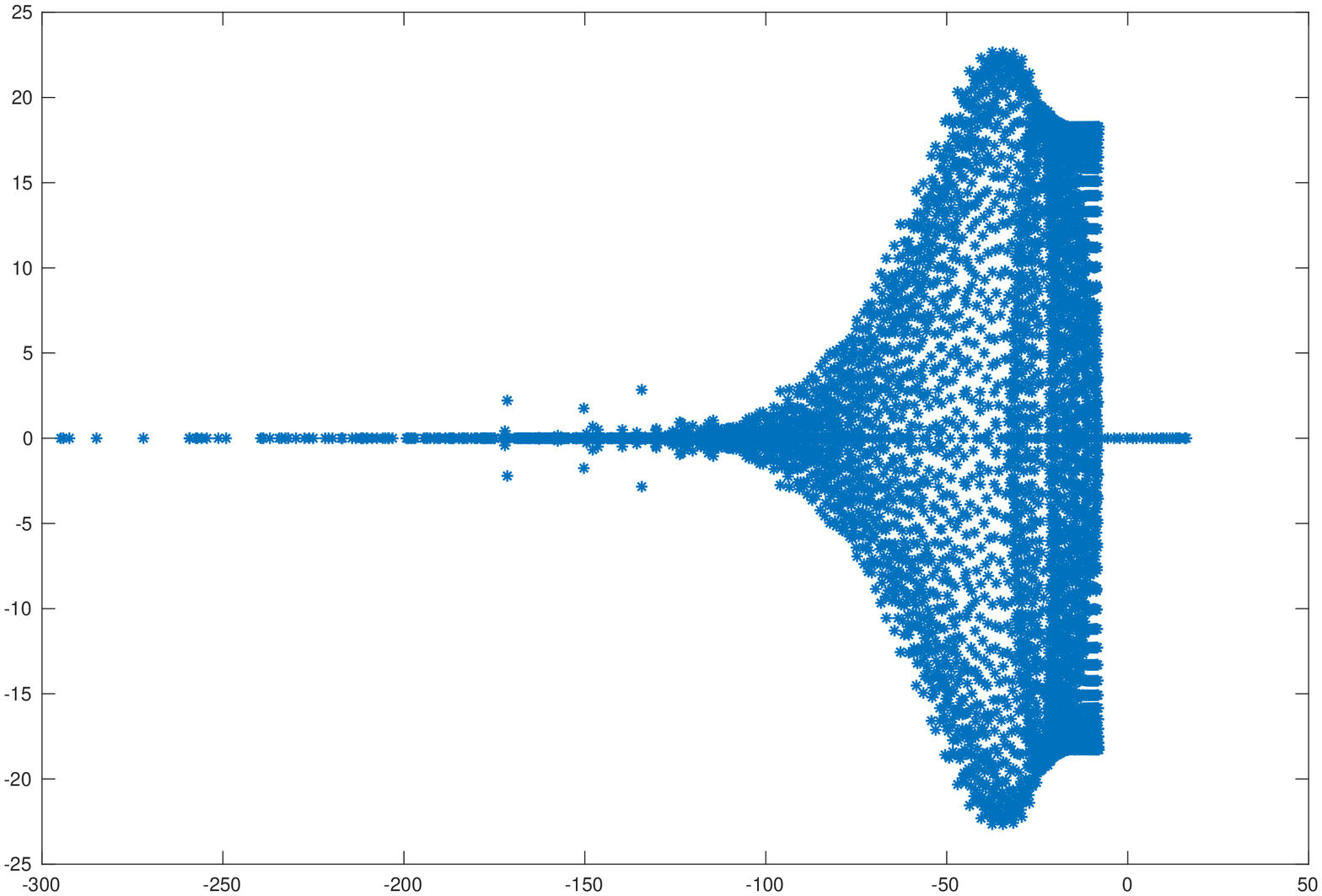}
    \caption{With violating parameters $\alpha_L = -0.05, \delta_L=0.001, \alpha_R=-0.05, \sigma_R=0.001, \sigma_L=\delta_R = 0$.}
     \end{subfigure}
    
    \caption{Eigenvalues \cblue{in the complex plane} of the system \eqref{eq:advection_diffusion_sbp_sat} coupling second-order FD -- $\mathbb{P}^1$ FE with parameters satisfying Theorem \ref{theorem:advection_diffusion_fe_interface} (on the left column) and violating the theorem (on the right column). All are with proper outer boundary treatment.}
    \label{fig:spectrum_linear_FD-FE}
\end{figure}

\subsubsection{Convergence study}
\label{sec:linear_convergence_study}

We use the following two dimensional Gaussian as reference solution
\[
u(x,y) = \frac{r}{4\pi(t-t_0)\epsilon} \exp\left[\frac{-1}{t-t_0}\left(\frac{(x-x_0-a_1t)^2+(y-y_0-a_2t)^2}{4\epsilon}\right)\right].
\]
The solution satisfies the boundary condition \eqref{eq:advection_diffusion_fem_bc} with zero boundary data $g=0$ if the initial solution is small enough at the inflow boundary. For convergence results in this section, we choose $r=0.005$. For the inviscid problem, we set $\epsilon = 0$. The 2D Gaussian \eqref{eq:analytic_inviscid_solution} is used as the reference solution. We set $r_1=1,r_2=0.2$. The $l_2$-error in each FD solution part $\bm u_{FD}$ and in each FE solution part $\bm u_{FD}$ are respectively calculated by
\begin{equation}
\label{eq:discrete_error_measurement}
l_2e(\bm u_{FD}) = \sqrt{(\bm u-\bm u_{e})^T\H(\bm u-\bm u_{e})},\quad l_2e(\bm u_{FE})=\sqrt{(\bm u-\bm u_{e})^T\mathbf{M}(\bm u-\bm u_{e})},
\end{equation}
where $\bm u_e$ is the exact solutions at the time of evaluation. The convergence rate in each method block is calculated as
\begin{equation}
\label{eq:convergence_rate}
Q = \log\left(\frac{l_2e(\bm u)^{(h_1)}}{l_2e(\bm u)^{(h_2)}}\right)/\log\left(\frac{h_1}{h_2}\right),
\end{equation}
for $h_1\neq h_2$ being two different mesh/grid sizes. Since multiple blocks with different methods are involved, the global convergence rate is calculated based on the sum of all $l_2$-errors, and the scale of mesh/grid sizes, e.g., $h_2/h_1=2$ when refining all the meshes/grids of size $h_1$ twice. We use the FD diagonal norm SBP operators formulated in \citet{Mattsson2004}. To demonstrate a proposed technique of constructing nondiagonal norm interpolation operators, the blocks are coupled by the `glue grid' FD-FD interpolation operators \citet{Kozdon2016}, and our proposed procedure Appendix \ref{section:shortcut_diagonal_norm}.

We choose the multiblock setup in Figure \ref{fig:2by2_domain}(a) to investigate convergence with an intention of examining possible degradation at corner points where the normal vector $\mathbf{n}$ is not defined. Tables \ref{table:2by2_multiblock_test1}, \ref{table:2by2_multiblock_test2}, \ref{table:2by2_multiblock_test3}, \ref{table:2by2_multiblock_test4} show the $l_2$-error and convergence rates for the problem \eqref{eq:linear_fem} subject to the boundary condition \eqref{eq:advection_diffusion_fem_bc} with different numbers of grid points. \eqref{eq:linear-semi-discrete-form} is integrated in time using Runge-Kutta 4 with time step $\Delta t = \min_{\text{blocks}}\{\beta h_{\max}^2\}$, where $\beta = 0.1$, and $h_{\max} = 1/(m-1)$ or $h_{\max}=r_{\max}$ the maximum element diameter depending on the method of the block. The solution is examined at $t=2$. The convergence rate is denoted by $Q$ and calculated by \eqref{eq:convergence_rate}. We respectively denote the number of FD grid points in each dimension $m$ (FD) and the maximum element diameter $r_{\max}$ (FE). For example, in the experiments where $m$ (FD) $=91$ and $r_{\max}$ (FE) $=D/91$, the discretization is carried out by $24843$ FD grid points and $27764$ Delaunay triangulated finite elements ($14063$ nodal points).

In Tables \ref{table:2by2_multiblock_test1} and \ref{table:2by2_multiblock_test2}, $m$ is equal to both grid size in the FD domain ($m=n_L=m_L$), and the number of discretization points along each axis of the FE mesh (matching interface). \cblue{In Table \ref{table:2by2_multiblock_test3} and \ref{table:2by2_multiblock_test4}, we perform the same study but when there are twice as many FE mesh points as FD grid points being located on the interfaces}. The obtained rates are expected between second-order FE approximation and fourth-order FD approximation. Order degradation by the edges/corner point is not observed. Simpler experiments involving equal distribution of the two methods can be found in \citet{Dao2019}. There one can see that the global convergence is bounded by the lowest second-order rate in $\mathbb P^1$ FE regions. 

\begin{figure}[pos=h]
    \centering
    \begin{subfigure}{0.33\textwidth}
    \centering
    \includegraphics[scale=0.5]{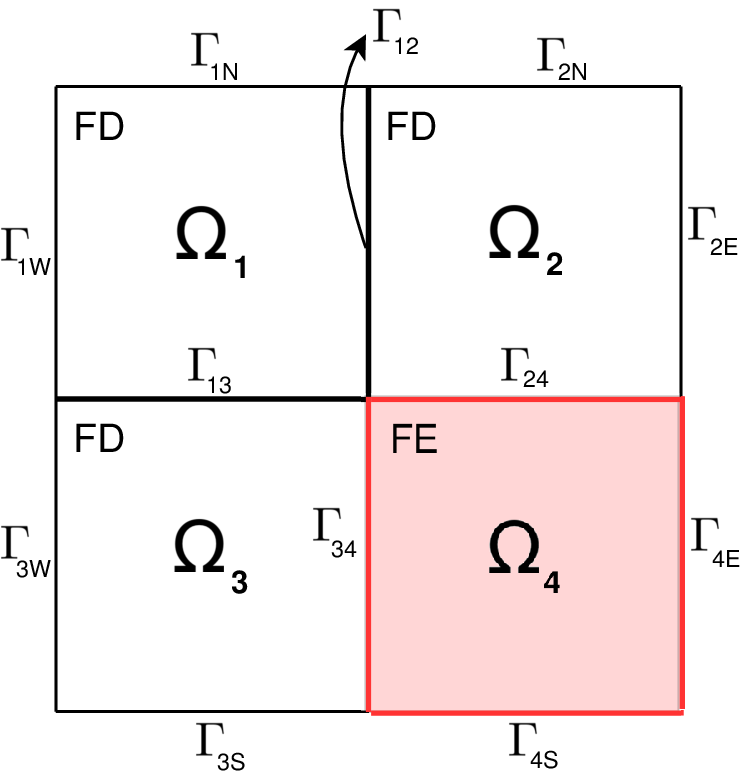}
    \caption{}
    \end{subfigure}
    \begin{subfigure}{0.32\textwidth}
    \centering
    \vspace{0.1cm}
    \includegraphics[scale=0.505]{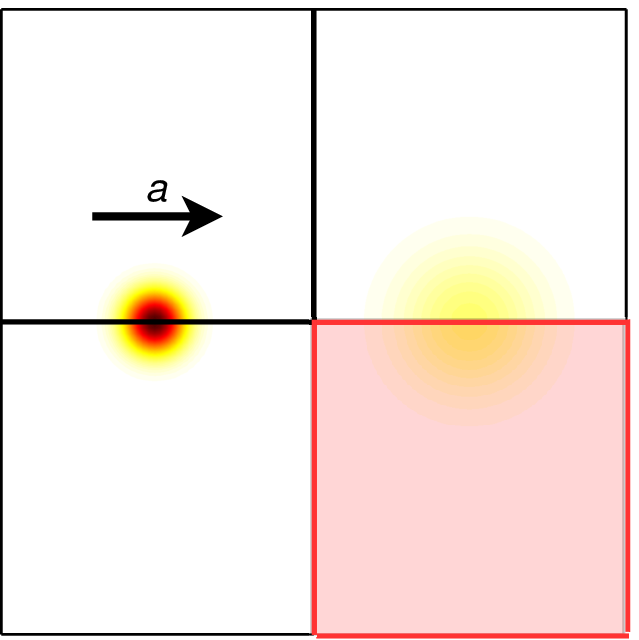}
    \caption{}
    \end{subfigure}
    \begin{subfigure}{0.32\textwidth}
    \centering
    \vspace{0.1cm}
    \includegraphics[scale=0.505]{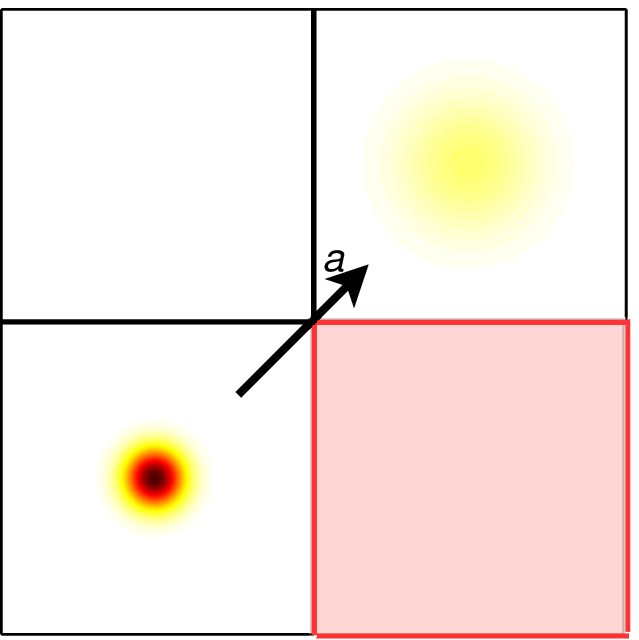}
    \caption{}
    \end{subfigure}
    \caption{The computational domain (a) and the two test cases (b) and (c).}
    \label{fig:2by2_domain}
\end{figure}

\begin{table}[pos=h]
\centering
\caption{Test case (b), Delaunay triangulated mesh, Fourth-order FD, $\mathbb{P}^1$ FE, $\bm a=(1,0)^T,x_0=-1,y_0=0,t=2$.}
\label{table:2by2_multiblock_test1}
\begin{tabular}{c|l|*4c|*4c}
\hline
\multirow{2}{*}{\scalebox{.8}{$m$ (FD)}} & \multirow{2}{*}{\scalebox{.8}{$r_{\max}$ (FE)}} & \multicolumn{4}{c|}{inviscid $\epsilon=0$} & \multicolumn{4}{c}{viscous $\epsilon=0.01$} \\ \cline{3-10}
& & \scalebox{.8}{$l_2e(\bm u_{FD})$} & \scalebox{.8}{$l_2e(\bm u_{FE})$} & \scalebox{.8}{$l_2e(\bm u)$} & $Q$ & \scalebox{.8}{$l_2e(\bm u_{FD})$} & \scalebox{.8}{$l_2e(\bm u_{FE})$}  & \scalebox{.8}{$l_2e(\bm u)$} & $Q$\\ \hline
41 & D/41 & 2.46E-2 & 5.25E-3 & 3.00E-2 & -- &6.45E-3 & 4.07E-3 & 1.06E-2 & -- \\
51 & D/51 & 1.27E-2 & 2.69E-3 & 1.54E-2 & 2.97 & 2.45E-3 &  1.25E-3 & 3.73E-3 & 4.66 \\
61 & D/61 & 7.92E-3 & 1.76E-3 & 9.68E-3 & 2.57 & 9.55E-4 & 3.16E-4 & 1.28E-3 & 5.86 \\
71 & D/71 & 5.49E-3 & 1.31E-3 & 6.79E-3 & 2.30 & 4.16E-4 & 4.36E-4 & 8.50E-4 & 2.66 \\
81 & D/81 & 3.98E-3 & 1.04E-3 & 5.00E-3 & 2.28 & 2.45E-4 & 5.00E-4 & 7.39E-4 & 1.05 \\
91 & D/91 & 2.88E-3 & 7.94E-4 & 3.71E-3 & 2.54 & 1.54E-4 & 4.07E-4 & 5.59E-4 & 2.37 \\ \hline
\end{tabular}
\end{table}

\begin{table}[pos=h]
\centering
\caption{Test case (c), Delaunay triangulated mesh, Fourth-order FD, $\mathbb{P}^1$ FE, $\bm a=(1,1)^T,x_0=-1,y_0=-1,t=2$.}
\label{table:2by2_multiblock_test2}
\begin{tabular}{c|l|*4c|*4c}
\hline
\multirow{2}{*}{\scalebox{.8}{$m$ (FD)}} & \multirow{2}{*}{\scalebox{.8}{$r_{\max}$ (FE)}} & \multicolumn{4}{c|}{inviscid $\epsilon=0$} & \multicolumn{4}{c}{viscous $\epsilon=0.01$} \\ \cline{3-10}
& & \scalebox{.8}{$l_2e(\bm u_{FD})$} & \scalebox{.8}{$l_2e(\bm u_{FE})$} & \scalebox{.8}{$l_2e(\bm u)$} & $Q$ & \scalebox{.8}{$l_2e(\bm u_{FD})$} & \scalebox{.8}{$l_2e(\bm u_{FE})$}  & \scalebox{.8}{$l_2e(\bm u)$} & $Q$\\ \hline
41 & D/41 & 4.26E-2 & 4.89E-3 & 4.77E-2 & -- & 1.73E-3 & 3.38E-7 & 1.74E-3 & -- \\
51 & D/51 & 2.29E-2 & 2.81E-3 & 2.59E-2 & 2.73 & 7.94E-4 & 1.25E-7 & 7.92E-4 & 3.54  \\
61 & D/61 & 1.38E-2 & 1.23E-3 & 1.50E-2 & 3.00 & 4.26E-4 & 1.20E-7 & 4.24E-4 & 3.43  \\
71 & D/71 & 8.91E-3 & 9.65E-4 & 9.89E-3 & 2.71 & 2.69E-4 & 1.14E-7 & 2.70E-4 & 2.92 \\
81 & D/81 & 6.30E-3 & 5.37E-4 & 6.78E-3 & 2.83 & 1.82E-4 & 1.00E-7 & 1.83E-4 & 2.91 \\
91 & D/91 & 4.78E-3 & 3.80E-4 & 5.11E-3 & 2.40 & 1.34E-4 & 6.02E-8 & 1.34E-4 & 2.68 \\ \hline
\end{tabular}
\end{table}

\begin{table}[pos=h]
\centering
\caption{Nonconformal Test case (b), Delaunay triangulated mesh, Fourth-order FD, $\mathbb{P}^1$ FE, $\bm a=(1,0)^T,x_0=-1,y_0=0,t=2$.}
\label{table:2by2_multiblock_test3}
\begin{tabular}{c|l|*4c|*4c}
\hline
\multirow{2}{*}{\scalebox{.8}{$m$ (FD)}} & \multirow{2}{*}{\scalebox{.8}{$r_{\max}$ (FE)}} & \multicolumn{4}{c|}{inviscid $\epsilon=0$} & \multicolumn{4}{c}{viscous $\epsilon=0.01$} \\ \cline{3-10}
& & \scalebox{.8}{$l_2e(\bm u_{FD})$} & \scalebox{.8}{$l_2e(\bm u_{FE})$} & \scalebox{.8}{$l_2e(\bm u)$} & $Q$ & \scalebox{.8}{$l_2e(\bm u_{FD})$} & \scalebox{.8}{$l_2e(\bm u_{FE})$}  & \scalebox{.8}{$l_2e(\bm u)$} & $Q$\\ \hline
21 & D/41 & 1.54E-1 & 3.39E-2 & 1.86E-1 & -- & 3.31E-2 & 1.55E-2 & 4.90E-2 & -- \\
26 & D/51 & 9.55E-2 & 2.00E-2 & 1.15E-1 & 2.20 & 2.45E-2 & 1.74E-2 & 4.17E-2 & 0.65 \\
31 & D/61 & 5.88E-2 & 1.17E-2 & 7.08E-2 & 2.67 & 1.48E-2 & 1.17E-2 & 2.69E-2 & 2.47 \\
36 & D/71 & 3.71E-2 & 7.41E-3 & 4.47E-2 & 3.00 & 9.33E-3 & 7.76E-3 & 1.70E-2 & 2.93 \\
41 & D/81 & 2.45E-2 & 5.01E-3 & 2.95E-2 & 3.09 & 5.62E-3 & 4.90E-3 & 1.05E-2 & 3.57 \\
46 & D/91 & 1.73E-2 & 3.39E-3 & 2.04E-2 & 3.05 & 3.31E-3 & 3.02E-3 & 6.31E-3 & 4.30 \\ \hline
\end{tabular}
\end{table}

\begin{table}[pos=h]
\centering
\caption{Nonconformal Test case (c), Delaunay triangulated mesh, Fourth-order FD, $\mathbb{P}^1$ FE, $\bm a=(1,1)^T,x_0=-1,y_0=-1,t=2$.}
\label{table:2by2_multiblock_test4}
\begin{tabular}{c|l|*4c|*4c}
\hline
\multirow{2}{*}{\scalebox{.8}{$m$ (FD)}} & \multirow{2}{*}{\scalebox{.8}{$r_{\max}$ (FE)}} & \multicolumn{4}{c|}{inviscid $\epsilon=0$} & \multicolumn{4}{c}{viscous $\epsilon=0.01$} \\ \cline{3-10}
& & \scalebox{.8}{$l_2e(\bm u_{FD})$} & \scalebox{.8}{$l_2e(\bm u_{FE})$} & \scalebox{.8}{$l_2e(\bm u)$} & $Q$ & \scalebox{.8}{$l_2e(\bm u_{FD})$} & \scalebox{.8}{$l_2e(\bm u_{FE})$}  & \scalebox{.8}{$l_2e(\bm u)$} & $Q$\\ \hline
21 & D/41 & 1.91E-1 & 5.25E-2 & 2.40E-1 & -- & 2.40E-2 & 1.58E-3 & 2.57E-2 & -- \\
26 & D/51 & 1.29E-1 & 3.09E-2 & 1.58E-1 & 1.88 & 9.33E-3 & 4.90E-5 & 9.33E-3 & 4.50 \\
31 & D/61 & 8.71E-2 & 1.62E-2 & 1.05E-1 & 2.37 & 4.68E-3 & 1.95E-6 & 4.68E-3 & 3.82 \\
36 & D/71 & 5.89E-2 & 1.00E-2 & 6.92E-2 & 2.69 & 2.69E-3 & 1.07E-6 & 2.69E-3 & 3.60 \\
41 & D/81 & 3.89E-2 & 6.76E-3 & 4.57E-2 & 3.03 & 1.66E-3 & 2.24E-7 & 1.66E-3 & 3.70 \\
46 & D/91 & 2.88E-2 & 3.98E-3 & 3.24E-2 & 2.83 & 1.05E-3 & 1.05E-7 & 1.05E-3 & 3.79 \\ \hline
\end{tabular}
\end{table}

\newpage

\newpage

\subsubsection{Superconvergence preservation}

For the convergence results, we use \textit{regular meshes} for the FE discretization, illustrated in Figure \ref{fig:computational_domain}, $\Omega = \Omega_L\cup\Omega_R= ([-2,0] \times [-1,1])\cup ([-2,0] \times [-1,1])$, where FD discretization is used on $\Omega_L$, and FE discretization is used on $\Omega_R$.

\begin{figure}[pos=h]
    \centering
    \includegraphics[scale=0.7]{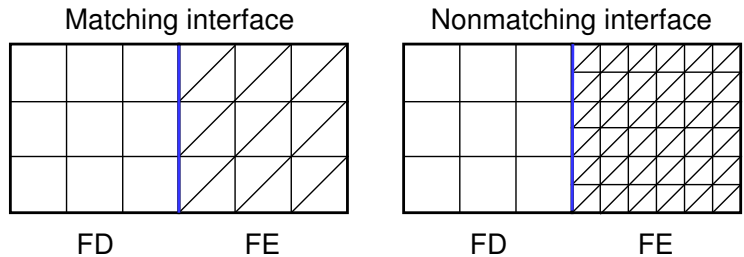}
    \caption{The computational mesh for superconvergence study.}
    \label{fig:computational_domain}
\end{figure}

\begin{table}[pos=h]
\caption{Superconvergence study on inviscid linear problem $\epsilon=0$, mesh type Figure \ref{fig:computational_domain} with $m$ grid points each direction each domain.}
\label{table:linear_superconvergence}
\centering
\begin{tabular}{c|*4c|*4c}
\hline
\multirow{2}{*}{$m$} & \multicolumn{4}{c|}{$\mathbb{P}^1$ FE to fourth-order FD} & \multicolumn{4}{c}{Fourth-order FD to $\mathbb{P}^1$ FE} \\ \cline{2-9}
& \scalebox{.8}{$l_2e(\bm u_{FD})$} & \scalebox{.8}{$l_2e(\bm u_{FE})$} & \scalebox{.8}{$l_2e(\bm u)$} & $Q$ & \scalebox{.8}{$l_2e(\bm u_{FD})$} & \scalebox{.8}{$l_2e(\bm u_{FE})$} & \scalebox{.8}{$l_2e(\bm u)$} & $Q$\\ \hline
31 & 1.86E-2 & 1.91E-2 & 3.72E-2 & -- & 1.95E-2 & 1.86E-2 & 3.80E-2 & --\\
41 & 7.59E-3 & 6.76E-3 & 1.45E-2 & 3.32 & 7.08E-3 & 7.59E-3 & 1.48E-2 & 3.28 \\
51 & 3.72E-3 & 2.69E-3 & 6.31E-3 & 3.67 & 2.88E-3 & 3.72E-3 & 6.61E-3 & 3.58 \\
61 & 2.14E-3 & 1.29E-3 & 3.39E-3 & 3.40 & 1.48E-3 & 2.14E-3 & 3.63E-3 & 3.26 \\ \hline
\end{tabular}
\end{table}

We test propagating the solution by both direction, from the FE domain to the FD domain $(x_0,y_0)^T=(1,0)^T,\bm a = (-1,0)^T$, and in the opposite direction $(x_0,y_0)^T=(-1,0)^T,\bm a = (1,0)^T$. \cgreen{The solution is evaluated at $t = 2$}. The convergence rates in fourth-order FD cases reported in Table \ref{table:linear_superconvergence} are noticeably close to a fourth-order--fourth-order coupling, cf., FD--FD fourth-order case in \citet{Mattsson2010}. We observe the so-called \textit{superconvergence} phenomenon in FE approximations, see e.g., \citet{Andreev1988}. At some special points, the order of convergence is higher than the convergence rate of the total error if calculated analytically. Those points are called \textit{superconvergence points}, and they happen to be the nodal points of the regular mesh Figure \ref{fig:computational_domain} with the advection operator $\bm a\cdot\grad v$. In the viscous case, there also exist \cblue{such} points which give rise to this behavior. It turns out that \eqref{eq:discrete_error_measurement} is not the most correct way to assess the discrete error in the sense of mimicking the continuous measurement $\norm{e}_{L^2(\Omega)} = \sqrt{\int_\Omega e^2 dxdy}$. A straightforward fix to avoid this problem is calculating the error with many more sample points added to the computation mesh. From a different angle, it cannot be denied that \eqref{eq:discrete_error_measurement} is desirable once our FE approximation fits in the SBP framework. (Simply, the defined discrete norm was applied.) To that extent, interpreting the advection operator as fourth-order and the diffusion operator as second-order may explain all the rates obtained. The authors have done separate numerical experiments on the FE approximation to verify that the current advection operator is fourth-order on the regular mesh regarding the error assessment \eqref{eq:discrete_error_measurement}. This is strong evidence that the proposed coupling technique preserves the native properties of the existing schemes.

\subsection{Nonlinear Burgers' Equation in 2D}
\label{sec:burger}
We consider the following nonlinear problem in two spatial dimensions
\begin{equation}
\begin{aligned}
\label{eq:burger_fem}
u_t + \bm a\cdot\grad\left(\frac{u^2}{2}\right) & = \grad\cdot(\epsilon\grad u), && (x,y) \in \Omega,\\
u & = u_0(x,y),&&(x,y) \in \Omega, t = 0.
\end{aligned}
\end{equation}
Inserting $\bm f = \left(\frac{u^2}{2}, \frac{u^2}{2}\right)^T$ to \eqref{eq:skew_symmetric_precise} gives $\alpha_1=\alpha_2=\frac{2}{3}$. From \eqref{eq:advection_diffusion_fem_bc}, a set of stable boundary condition reads
\begin{align}
\label{eq:burger_fem_bc}
\frac{1}{3}u(\bm a\cdot\mathbf{n}u-\abs{\bm a\cdot\mathbf{n}u})-\epsilon \mathbf{n}\cdot\grad u = g(x,y,t), \quad(x,y) \in \partial\Omega, t > 0,
\end{align}
where $\bm a = (a_1, a_2)^T>0$ is a constant vector and $\epsilon$ is a real constant.

\subsubsection*{The FD--FD coupling}
A FD--FD semi-discretization of the coupling problem \eqref{eq:advection_diffusion_coupling_continuous} under the governing equation \eqref{eq:burger_fem} is given by
\begin{equation}
\label{eq:burger_sbp_sat}
\begin{aligned}
\bm u_t &= -\frac{1}{3}a_1{\bm U}\D_x\bm u -\frac{1}{3}a_1\D_x{\bm U}\bm u - \frac{1}{3}a_2{\bm U}\D_y\bm u -\frac{1}{3}a_2\D_y{\bm U}\bm u + \epsilon \D_{2x}\bm u + \epsilon \D_{2y}\bm u + \mbox{SAT}_L+\mbox{SAT}_{IL},\\
\bm v_t &= -\frac{1}{3}a_1\bm V\D_x\bm v -\frac{1}{3}a_1\D_x\bm V\bm v - \frac{1}{3}a_2\bm V\D_y\bm v -\frac{1}{3}a_2\D_y\bm V\bm v + \epsilon \D_{2x}\bm v + \epsilon \D_{2y}\bm v + \mbox{SAT}_R+\mbox{SAT}_{IR},
\end{aligned}
\end{equation}
where the interface SATs are
\begin{equation}
\label{eq:burger_sat_interface}
\begin{aligned}
\mbox{SAT}_{IL} & = \frac{1}{6} ({\bm H_x^L}^{-1} \bm e_n)\otimes (({\bm U}\bm u)_I-\bm{I}_{R2L}(\bm V\bm v)_I)+\frac{1}{6} ({\bm H_x^L}^{-1} \bm e_n)\otimes \left[{\bm U_I}(\bm u_I-\bm{I}_{R2L}\bm v_I)\right]\\
&+ \delta_L ({\bm H_x^L}^{-1} \bm e_n) \otimes ((\D_xu)_I - \bm{I}_{R2L}(\D_x\bm v)_I) + \sigma_L ({\bm H_x^L}^{-1}\bm d_n^T)\otimes(\bm u_I-\bm{I}_{R2L}\bm v_I),\\
\mbox{SAT}_{IR} & = -\frac{1}{6}({\bm H_x^R}^{-1} \bm e_1) \otimes ((\bm V\bm v)_I-\bm{I}_{L2R}({\bm U}\bm u)_I)-\frac{1}{6}({\bm H_x^R}^{-1} \bm e_1) \otimes \left[\overline{\bm v_I}(\bm v_I-\bm{I}_{L2R}\bm u_I)\right]\\
&+\delta_R ({\bm H_x^R}^{-1} \bm e_1) \otimes ((\D_x\bm v)_I-\bm{I}_{L2R}(\D_x\bm u)_I)+ \sigma_R ({\bm H_x^R}^{-1} \bm d_1^T) \otimes (\bm v_I-\bm{I}_{L2R}\bm u_I),
\end{aligned}
\end{equation}
where $\delta_L=-\sigma_R, \sigma_L =-\delta_R, \delta_L+\sigma_L = -1$.
\subsubsection*{The FD--FE coupling}
A FD--FE semi-discretization of the coupling problem \eqref{eq:advection_diffusion_coupling_continuous} under the governing equation \eqref{eq:burger_fem} is given by
\begin{equation}
\label{eq:burger_fe_sbp_sat}
\begin{aligned}
\bm u_t &= -\frac{1}{3}a_1{\bm U}\D_x\bm u -\frac{1}{3}a_1\D_x{\bm U}\bm u - \frac{1}{3}a_2{\bm U}\D_y\bm u -\frac{1}{3}a_2\D_y{\bm U}\bm u + \epsilon \D_{2x}\bm u + \epsilon \D_{2y}\bm u + \mbox{SAT}_L+\mbox{SAT}_{IL},\\
\bm v_t &= -\mathbf{M}^{-1}\mathbf{C}(\bm v)\bm v-\epsilon \mathbf{M}^{-1}\mathbf{A}\bm v+\frac{a_2}{3}\mathbf{M}^{-1}[\mathscr{R}_M^{N}(\bm v)-\mathscr{R}_M^{N}(\abs{\bm v})]\bm v\\
&\hspace{0.5cm}+\frac{a_1}{3}\mathbf{M}^{-1}[\mathscr{R}_M^{E}(\bm v)-\mathscr{R}_M^{E}(\abs{\bm v})]\bm v+\frac{a_2}{3}\mathbf{M}^{-1}[-\mathscr{R}_M^{S}(\bm v)-\mathscr{R}_M^{S}(\abs{\bm v})]\bm v - \bm r +\mbox{SAT}_{IR},
\end{aligned}
\end{equation}
where $\mathscr{R}_M^N,\mathscr{R}_M^E,\mathscr{R}_M^S$ are obtained by limiting the integration of $\mathscr{R}_M$ to $\Gamma_N,\Gamma_E,\Gamma_S$, respectively. The coupling penalty terms are given by
\begin{equation}
\label{eq:burger_fe_interface}
\begin{aligned}
\mbox{SAT}_{IL} & = \frac{1}{6} (H_x^{-1} \bm e_n)\otimes (({\bm U}\bm u)_I-\bm{I}_{R2L}\bm L_W\bm V\bm v)+\frac{1}{6} (H_x^{-1} \bm e_n)\otimes \left[{\bm U_I}(\bm u_I-\bm{I}_{R2L}\bm L_W\bm v)\right]\\
&+ \delta_L\epsilon(H_x^{-1} \bm e_n) \otimes ((\D_x\bm u)_I - \bm{I}_{R2L}\mathbf{D}_x^W\bm v)+\sigma_L\epsilon(H_x^{-1}\bm d_n^T)\otimes(\bm u_I-\bm{I}_{R2L}\bm L_W\bm v),\\
\mbox{SAT}_{IR} & = -\frac{1}{6}\mathbf{M}^{-1}\mathbf{R}_M^W(\bm V\bm v-\bm L_W^T\bm{I}_{L2R}({\bm U}\bm u)_I)-\frac{1}{6}\mathbf{M}^{-1}\bm V\mathbf{R}_M^W\bm L_W^T(\bm L_W\bm v-\bm{I}_{L2R}\bm u_I)\\
&+\delta_R\epsilon\mathbf{M}^{-1}\mathbf{R}_M^W\bm L_W^T(\mathbf{D}_x^W\bm v-\bm{I}_{L2R}(\D_x\bm u)_I) +\sigma_R\epsilon\mathbf{M}^{-1}(-\mathbf{R}^W_C)^T\bm L_W^T(\bm L_W\bm v - \bm{I}_{L2R}\bm u_I),
\end{aligned}
\end{equation}
where $\delta_L=-\sigma_R, \sigma_L =-\delta_R, \delta_L+\sigma_L = -1$.
\subsubsection*{Convergence study}
An analytic solution of \eqref{eq:burger_fem} subject to the boundary condition \eqref{eq:burger_fem_bc} is
\[
\cblue{u(x,y,t) = \frac{c}{a_1^2+a_2^2} - b \tanh\left(b\frac{a_1(x-x_0)+a_2(y-y_0)-ct}{2\epsilon}\right), \quad t > 0,}
\]
where $b$ and $c$ are arbitrary constants. For the following convergence study, we choose $b = 1, c = 2$. The semi-discrete scheme \eqref{eq:burger_fe_sbp_sat} is integrated in time using the standard Runge-Kutta 4. The time step size is chosen to be $\Delta t = \beta h^2, \beta = 0.1$. The domain setup is as in Figure \ref{fig:coupling-interface}, where $\Omega_L=[-2,0]\times[-1,1]$, and $\Omega_R=[0,2]\times[-1,1]$. The problem settings are: $\cblue{\bm a = (1,1)^T}, \epsilon = 0.1,x_0=-1,\cblue{y_0=-1}$. The solution is examined at $t = 1$. A convergence study is shown in Tables \ref{table:nonlinear_convergence_fd_fe} and \ref{table:nonlinear_convergence_fd_fe_nonmatching}. The notations in Tables \ref{table:nonlinear_convergence_fd_fe} and \ref{table:nonlinear_convergence_fd_fe_nonmatching} are same to the ones used in Table \ref{table:2by2_multiblock_test1}. In both tables, we can observe the expected convergence rate of second-order. It can also be seen that the use of higher-order FD operators slightly lowers the error and improves convergence rates.

\begin{table}[pos=h]
\centering
\caption{Convergence rates for the case of matching interface.}
\label{table:nonlinear_convergence_fd_fe}
\cblue{
\begin{tabular}{c|*4c|*4c}
\hline
\multirow{2}{*}{$m$ (FD,FE)} & \multicolumn{4}{c|}{Second-order FD} & \multicolumn{4}{c}{Fourth-order FD} \\ \cline{2-9}
& \scalebox{.8}{$l_2e(\bm u_{FD})$} & \scalebox{.8}{$l_2e(\bm u_{FE})$} & \scalebox{.8}{$l_2e(\bm u)$} & $Q$ & \scalebox{.8}{$l_2e(\bm u_{FD})$} & \scalebox{.8}{$l_2e(\bm u_{FE})$} & \scalebox{.8}{$l_2e(\bm u)$} & $Q$\\ \hline
21 & 2.91E-02 & 1.83E-02 & 4.73E-02 & -- & 1.80E-02 & 1.73E-02 & 3.53E-02 & -- \\
31 & 1.31E-02 & 9.19E-03 & 2.23E-02 & 1.85 & 7.64E-03 & 8.81E-03 & 1.64E-02 & 1.89 \\
41 & 7.43E-03 & 5.50E-03 & 1.29E-02 & 1.90 & 4.22E-03 & 5.30E-03 & 9.52E-03 & 1.89 \\
51 & 4.77E-03 & 3.64E-03 & 8.42E-03 & 1.91 & 2.67E-03 & 3.52E-03 & 6.19E-03 & 1.93 \\ \hline
\end{tabular}}
\end{table}

\begin{table}[pos=h]
\centering
\caption{Convergence rates for the case of 2 to 1 mesh ratio nonconformal coupling.}
\label{table:nonlinear_convergence_fd_fe_nonmatching}
\cblue{
\begin{tabular}{c|c|*4c|*4c}
\hline
\multirow{2}{*}{$m$ (FD)} & \multirow{2}{*}{$r_{\max}$ (FE)} & \multicolumn{4}{c|}{Second-order FD} & \multicolumn{4}{c}{Fourth-order FD} \\ \cline{3-10}
& & \scalebox{.8}{$l_2e(\bm u_{FD})$} & \scalebox{.8}{$l_2e(\bm u_{FE})$} & \scalebox{.8}{$l_2e(\bm u)$} & $Q$ & \scalebox{.8}{$l_2e(\bm u_{FD})$} & \scalebox{.8}{$l_2e(\bm u_{FE})$} & \scalebox{.8}{$l_2e(\bm u)$} & $Q$\\ \hline
11 & D/21 & 1.10E-01 & 2.14E-02 & 1.31E-01 & -- & 6.44E-02 & 2.11E-02 & 8.54E-02 & -- \\
21 & D/41 & 2.49E-02 & 5.93E-03 & 3.09E-02 & 2.08 & 8.44E-03 & 5.59E-03 & 1.40E-02 & 2.61 \\
31 & D/61 & 1.07E-02 & 2.75E-03 & 1.34E-02 & 2.06 & 3.11E-03 & 2.61E-03 & 5.72E-03 & 2.21 \\ \hline
\end{tabular}}
\end{table}

\section{Conclusion}
\label{sec:conclusion}
\cpurple{The multiblock coupling of FD schemes and FE schemes in this paper is presented within the SBP framework. The technique uses a SAT technique to impose the interface continuity weakly. SBP-preserving interpolation operators resolve the nonconforming interface distribution.} Accuracy, stability, and conservation are investigated on general scalar conservation laws. Numerical results are shown for a linear hyperbolic/parabolic problem and a nonlinear problem.\\

Future works may concern extensions for curvilinear grids, optimal time-stepping, an option for upwind operators \cite{Lundgren2020}, high-order finite elements or a solution to the case when block corners are unmatching. The work can be structurally extended for nonlinear systems.

\appendix
\section{Appendix: Construction of the nondiagonal norm interpolation operators}
\label{section:construct_interpolation_operators}
We show two ways of constructing the necessary interpolation operators in the interface SATs in this section. The first approach is similar to the one introduced in \citet{Mattsson2010}. The second way requires constructing a pair of diagonal norm interpolation matrices.
\subsection{Construction from the matrix operator structure}
\label{section:construction_by_structure}
We briefly show the construction of interpolation operators satisfying \eqref{eq:conservation_property_fd_fe} and \eqref{eq:accuracy_requirement}. If the solutions are coupled along characteristic lines, see \citet{Mattsson2010}, an extra requirement reads
\begin{equation}
\label{eq:sylvester}
\bm H (\bm I-\bm I_{\textit{FD2FE}}\bm I_{\textit{FE2FD}}) \geq 0,\;\; \mathbf M_I (\bm I-\bm I_{\textit{FE2FD}}I_{\textit{FD2FE}}) \geq 0,
\end{equation}
where \cblue{$\mathbf M_I$ is defined in Definition \ref{def:conservation_property_fd_fe}, $\bm H$ is the one-dimensional FD norm matrix along the interface}, $\bm I_{\textit{FD2FE}}$ translates the nodal values from a FD interface to a FE interface, and $\bm I_{\textit{FE2FD}}$ does the opposite direction.

The structure of $\bm I_{\textit{FE2FD}}$ is given by
\setcounter{MaxMatrixCols}{20}
\[
\bm I_{\textit{FE2FD}} = \begin{bmatrix}
q_{1,1} & \dots & & & & & & \dots & q_{1,r} & & & & & & \\
\vdots & & & & & & & & \vdots & & & & & & \\
q_{t,1} & \dots & & & & & & \dots & q_{t,r} & & & & & & \\
& & q_{2s} & q_{2s-1} & \dots & q_0 & \dots & q_{2s-1} & q_{2s} & & & & & & \\
& & & q_{2s} & q_{2s-1} & \dots & q_0 & \dots & q_{2s-1} & q_{2s} & & & & & \\
& & & & \ddots & \ddots & \ddots & \ddots & \ddots & \ddots & & & & & \\
& & & & & q_{2s} & q_{2s-1} & \dots & q_0 & \dots & q_{2s-1} & q_{2s} & & & \\
& & & & & & q_{2s} & q_{2s-1} & \dots & q_0 & \dots & q_{2s-1} & q_{2s} & & \\
& & & & & & q_{t,r} & \dots & & & & & & \dots & q_{t,1} \\
& & & & & & \vdots & & & & & & & & \vdots \\
& & & & & & q_{1,r} & \dots & & & & & & \dots & q_{1,1} \\
\end{bmatrix},
\]
where $r,m,s$ depend on interface nodal distribution and accuracy orders. For example, when using $6^{th}$ order FD and $\mathbb{P}^1$ FE, two to one interface ratio (finer on the FE side), we have used $r=18$, $t=6$, $s=7$. The other corresponding operator is given by $\bm I_{\textit{FD2FE}} = \bm H^{-1} (\mathbf M_I\bm I_{\textit{FE2FD}})^T$. Given the known structure, $\bm I_{\textit{FE2FD}}$ and $\bm I_{\textit{FD2FE}}$ can be determined by solving for accuracy \eqref{eq:accuracy_requirement} using some mathematical symbolic package (such as Maple or Symbolic Matlab).

\subsection{Construction from existing diagonal norm interpolation operators}\label{section:shortcut_diagonal_norm}
A simpler construction can be derived ultilizing the following property. Given that $\bm I_{b2c}, \bm I_{a2b}$ are SBP-preserving interpolation operators, assuming compatible matrix multiplication, we have that
\begin{equation}
\label{eq:3_layers}
\bm I_{a2c}=\bm I_{b2c}\bm I_{a2b}
\end{equation}
is also an SBP-preserving interpolation operator,  see \citet{Kozdon2016}. Our idea is to fabricate a one-dimensional middle layer of which nodal distribution matches the FE interface nodes, and the corresponding one-dimensional norm in the $y$-direction reassembles a FD diagonal norm. Conveniently, the difference scheme corresponding to the middle layer needs not be precisely constructed. An illustration is shown in Figure \ref{fig:nondiagonal_interpolation}. A simplest example of this idea is shown in Example \ref{example:2nd_fd_p1_fe_int_op}. \cblue{The authors found that the technique sketched here is related to the mortar method, see \cite{Kopriva1996, Bui-Thanh2012}, the mortar method in SBP context \cite{Chan2020,Friedrich2018}.}
\begin{figure}[pos=h]
    \centering
    \includegraphics[scale=2]{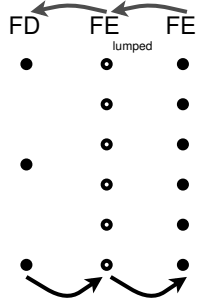}
    \caption{Construction of nondiagonal norm interpolation operators using a "bridge" diagonal norm layer.}
    \label{fig:nondiagonal_interpolation}
\end{figure}

\begin{example}
\label{example:2nd_fd_p1_fe_int_op}
\textbf{$2^{nd}$ order FD - $\mathbb{P}^1$ FE matching uniformly distributed interface}

Recall the notations $\mathbf{M}_I,\bm H$ being the one-directional norms in the $y$-direction of the FE scheme and the FD scheme, respectively, $h_y$ being the step size, "L", "R", "M" being the FD, FE layers, and the lumped FE layer. We have
\[
\mathbf{M}_I = h_y\begin{bmatrix}
1/3 & 1/6 & 0 & 0 & \hdots & 0 \\
1/6 & 2/3 & 1/6 & 0 & \hdots & 0 \\
0 & 1/6 & 2/3 & 1/6 & \hdots & 0 \\
\vdots & 0 & \ddots & \ddots & \ddots & \vdots\\
0 & \hdots & 0 & 1/6 & 2/3 & 1/6\\
0 & \hdots & 0 & 0 & 1/6 & 1/3
\end{bmatrix},\quad
\bm H = h_y\begin{bmatrix}
0.5 & 0 & 0 & 0 & \hdots & 0 \\
0 & 1 & 0 & 0 & \hdots & 0 \\
0 & 0 & 1 & 0 & \hdots & 0 \\
\vdots & 0 & \ddots & \ddots & \ddots & \vdots\\
0 & \hdots & 0 & 0 & 1 & 0\\
0 & \hdots & 0 & 0 & 0 & 0.5
\end{bmatrix}.
\]

For the interpolation between FE$_{\text{lumped}}$ and FE, we use $\bm{I}_{M2R}$ being the identity matrix and
\[
\bm{I}_{R2M} = \begin{bmatrix}
2/3 & 1/3 & 0 & 0 & \hdots & 0 \\
1/6 & 2/3 & 1/6 & 0 & \hdots & 0 \\
0 & 1/6 & 2/3 & 1/6 & \hdots & 0 \\
\vdots & 0 & \ddots & \ddots & \ddots & \vdots\\
0 & \hdots & 0 & 1/6 & 2/3 & 1/6\\
0 & \hdots & 0 & 0 & 1/3 & 2/3
\end{bmatrix}.
\]
\end{example}

\begin{theorem}
\label{theorem:construction_general_procedure}
Assume that the diagonal norm interpolation operators $\bm{I}_{L2M},\bm{I}_{M2L}$ fulfill the acuracy and SBP requirements \eqref{eq:accuracy_requirement}, \eqref{eq:conservation_property_fd_fd}, \eqref{eq:sylvester}. By constructing
\[
\bm{I}_{L2R}=\bm{I}_{M2R}\bm{I}_{L2M} \text{ and } \bm{I}_{R2L}=\bm{I}_{M2L}\bm{I}_{R2M},
\]
where $\bm{I}_{M2R} = \bm I_{m\times n}, \bm{I}_{R2M} = (\mathbf M_I \bm H^{-1})^T$, by Definition \ref{def:conservation_property_fd_fd}, $\bm{I}_{L2R}, \bm{I}_{R2L}$ are SBP-preserving interpolation operators. Moreover, they also fulfill the extra requirement \eqref{eq:sylvester}.
\end{theorem}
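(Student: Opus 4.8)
The plan is to treat the three layers explicitly---the FD interface (diagonal norm $\bm H_y$, as in Definition \ref{def:conservation_property_fd_fe}), the fabricated ``middle'' layer whose nodes coincide with the FE interface nodes (diagonal norm $\bm H$), and the FE interface (norm $\mathbf M_I$)---and to obtain the FD--FE pair by chaining the two elementary pairs $\{\bm{I}_{L2M},\bm{I}_{M2L}\}$ and $\{\bm{I}_{M2R},\bm{I}_{R2M}\}$ through \eqref{eq:3_layers}. First I would check that $\{\bm{I}_{M2R},\bm{I}_{R2M}\}$ is itself SBP-preserving between the middle and the FE layer: with $\bm{I}_{M2R}=\bm I$ the relation $\mathbf M_I\bm{I}_{M2R}=\bm{I}_{R2M}^T\bm H$ collapses to $\mathbf M_I=\bm{I}_{R2M}^T\bm H$, which is exactly $\bm{I}_{R2M}=(\mathbf M_I\bm H^{-1})^T$; this also forces $\bm H$ to be the row-sum (lumped) diagonal of $\mathbf M_I$. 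The composite identity \eqref{eq:conservation_property_fd_fe} then follows by a one-line chain: $\mathbf M_I\bm{I}_{L2R}=\mathbf M_I\bm{I}_{M2R}\bm{I}_{L2M}=\bm{I}_{R2M}^T\bm H\bm{I}_{L2M}=\bm{I}_{R2M}^T\bm{I}_{M2L}^T\bm H_y=\bm{I}_{R2L}^T\bm H_y$, using the SBP relations \eqref{eq:conservation_property_fd_fd} of the two constituent pairs.

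For the accuracy part of \eqref{eq:accuracy_requirement}, the map $\bm{I}_{M2R}=\bm I$ is exact while $\bm{I}_{R2M}=\bm H^{-1}\mathbf M_I$ reproduces constants (its rows sum to one by the lumping identity) and interior linears; composing the order-$p$ diagonal-norm operators $\bm{I}_{L2M},\bm{I}_{M2L}$ with these consistent maps preserves order $p=\min(p_L,p_R)$, which is in any case capped by the $\mathbb P^1$ boundary closure. Together with the previous paragraph this establishes the ``SBP-preserving'' claim per Definition \ref{def:conservation_property_fd_fd}.

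The crux is the positivity requirement \eqref{eq:sylvester}. The decisive extra ingredient is the \textbf{sublemma} $\mathbf M_I\le\bm H$ in the L\"owner order, i.e.\ the consistent $\mathbb P^1$ mass matrix is dominated by its lumped counterpart; I would prove this element-wise, since on each edge the lumped-minus-consistent element matrix equals $\tfrac{h}{6}\bigl[\begin{smallmatrix}1&-1\\-1&1\end{smallmatrix}\bigr]\ge 0$, and finite-element assembly preserves positive semidefiniteness. Granting it, I rewrite each round trip as a congruence using the SBP identity just proved: from $\mathbf M_I\bm{I}_{L2R}=\bm{I}_{R2L}^T\bm H_y$ one gets $\mathbf M_I(\bm I-\bm{I}_{L2R}\bm{I}_{R2L})=\mathbf M_I-\bm{I}_{R2L}^T\bm H_y\bm{I}_{R2L}$, and since $\bm{I}_{R2L}=\bm{I}_{M2L}\bm H^{-1}\mathbf M_I$, the assumed L--M inequality $\bm{I}_{M2L}^T\bm H_y\bm{I}_{M2L}\le\bm H$ yields $\bm{I}_{R2L}^T\bm H_y\bm{I}_{R2L}\le\mathbf M_I\bm H^{-1}\mathbf M_I\le\mathbf M_I$, the last step being $\mathbf M_I\le\bm H$ once more. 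The companion inequality is symmetric: $\bm H_y(\bm I-\bm{I}_{R2L}\bm{I}_{L2R})=\bm H_y-\bm{I}_{L2M}^T\mathbf M_I\bm{I}_{L2M}$, and $\bm{I}_{L2M}^T\mathbf M_I\bm{I}_{L2M}\le\bm{I}_{L2M}^T\bm H\bm{I}_{L2M}\le\bm H_y$ by the sublemma together with the other assumed L--M inequality.

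The main obstacle I anticipate is the bookkeeping in this last step: keeping the three norms and their differing node counts straight, invoking L\"owner-monotonicity under congruence ($X\le Y\Rightarrow C^TXC\le C^TYC$) in the right order, and applying the sublemma $\mathbf M_I\le\bm H$ on both the outer and the inner sides of each congruence. Once the sublemma is secured, what remains is only a short chain of operator inequalities, so I expect the proof of \eqref{eq:sylvester} to be short but delicate rather than computationally heavy.
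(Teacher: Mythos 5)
Your proposal follows the same skeleton as the paper's justification — pass through the fabricated lumped-$\mathbb{P}^1$ middle layer and compose the two elementary pairs via \eqref{eq:3_layers} — and your verification that $\{\bm I_{M2R},\bm I_{R2M}\}=\{\bm I,\bm H^{-1}\mathbf M_I\}$ is itself an SBP-preserving pair, together with the one-line chain $\mathbf M_I\bm I_{L2R}=\bm I_{R2M}^T\bm H\bm I_{L2M}=\bm I_{R2L}^T\bm H_y$, is exactly what the paper invokes implicitly. The substantive difference is that the paper does not actually prove the theorem: it asserts in one sentence that the Example~\ref{example:2nd_fd_p1_fe_int_op} operators "naturally hold" \eqref{eq:conservation_property_fd_fd}, \eqref{eq:accuracy_requirement} and \eqref{eq:sylvester}, and then cites \eqref{eq:3_layers} for closure under composition. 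You supply the missing content: the sublemma $\mathbf M_I\le\bm H$ in the L\"owner order (correct — the per-edge difference is $\tfrac{h}{6}\bigl[\begin{smallmatrix}1&-1\\-1&1\end{smallmatrix}\bigr]\ge 0$ and assembly preserves semidefiniteness), which is precisely the \eqref{eq:sylvester} condition for the middle-to-FE pair, and the congruence chains $\bm I_{L2M}^T\mathbf M_I\bm I_{L2M}\le\bm I_{L2M}^T\bm H\bm I_{L2M}\le\bm H_y$ and $\bm I_{R2L}^T\bm H_y\bm I_{R2L}\le\mathbf M_I\bm H^{-1}\mathbf M_I\le\mathbf M_I$ (the last step using $0\le A\le I\Rightarrow A^2\le A$ for $A=\bm H^{-1/2}\mathbf M_I\bm H^{-1/2}$). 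These are all correct and make the "moreover" clause of the theorem an actual theorem rather than an assertion; they also show, in passing, that \eqref{eq:sylvester} is stable under composition of SBP-preserving pairs, which the paper outsources to a citation. One cosmetic caution: the paper's \eqref{eq:sylvester} pairs the norms with the round-trip products in a dimensionally inconsistent order for nonmatching interfaces; your reading (each norm paired with the round trip that starts and ends on its own side) is the only sensible one and is what you should state explicitly when writing this up.
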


Since the lumped $\mathbb{P}^1$ mass matrix corresponds to a second-order SBP FD diagonal norm, and the corresponding interpolation operators are of the form in Example \ref{example:2nd_fd_p1_fe_int_op} reassembling the Simpson's rule naturally hold the properties \eqref{eq:conservation_property_fd_fd}, \eqref{eq:accuracy_requirement} to the desired accuracy, and \eqref{eq:sylvester}. By the relation \eqref{eq:3_layers}, Theorem \ref{theorem:construction_general_procedure} always holds true.

\section*{Acknowledgments}
The authors acknowledge the on-point comments from the anonymous reviewers which have helped improve the quality of this paper. The authors would like to thank Vidar Stiernstr{\"o}m and Lukas Lundgren for the fruitful discussions.


\bibliographystyle{cas-model2-names}

\bibliography{references}

\end{document}